\newcommand{\ra}{\rightarrow}
\newcommand{\bb}[1]{\mathbb{#1}}
\newcommand{\Z}{\bb{Z}}
\newcommand{\Q}{\bb{Q}}
\newcommand{\R}{\bb{R}}
\newcommand{\C}{\bb{C}}
\newcommand{\Zp}{\Z_p}
\newcommand{\Zprm}{\Z_\prm}
\newcommand{\Qprm}{\Q_\prm}
\newcommand{\p}{\mathfrak{p}}
\newcommand{\ten}{\otimes}
\newcommand{\teno}[1]{\ten_{#1}}
\newcommand{\gal}[1]{\mathrm{Gal}(#1)}
\newcommand{\of}{\circ}
\newcommand{\br}[1]{\bar{#1}} 
\newcommand{\st}{^\times}
\newcommand{\ie}{i.e. }
\newcommand{\iso}{\cong}
\newcommand{\can}{\simeq}
\newcommand{\Hom}{\mathrm{Hom}}
\newcommand{\End}{\mathrm{End}}
\newcommand{\Aut}{\mathrm{Aut}}
\newcommand{\Det}{\mathrm{det}}
\newcommand{\detf}{\mathrm{D}}
\newcommand{\setm}{\! \smallsetminus \!}
\newcommand{\lam}{\Lambda_m}
\newcommand{\dnd}{\,{{}^{{}_\not}}|\,}
\newcommand{\oh}{\mathcal{O}}
\newcommand{\class}{\mathrm{Cl}}
\newcommand{\gen}[1]{\langle #1 \rangle}
\newcommand{\spc}{\mbox{ }}
\newcommand{\Mod}{\spc\mathrm{mod}\spc}
\newcommand{\con}{\subseteq}
\newcommand{\inv}{\kappa}
\newcommand{\ann}{\mathrm{ann}}
\newcommand{\J}[1]{\mathcal{J}(#1)}
\newcommand{\hJ}[2]{\mathcal{J}^#1(#2)}
\newcommand{\I}{\mathcal{I}}
\newcommand{\E}{\mathcal{E}}
\newcommand{\eps}{\epsilon}
\newcommand{\tors}{\mathrm{tors}}
\newcommand{\qbrc}{\Q^{\mathrm{c}}}
\newcommand{\rf}[1]{\mathcal{R}^{#1}}
\newcommand{\abst}[1]{\mathrm{St}(#1)}
\newcommand{\abKkSw}{\abst{K/k,S,w}}
\newcommand{\abKkS}{\abst{K/k,S}}
\newcommand{\uab}{U_{K/k}^{\mathrm{ab}}}
\newcommand{\sat}{\spc | \spc}
\newcommand{\stick}{\theta}
\newcommand{\halfst}{\tilde{\stick}}
\newcommand{\inpr}[2]{\langle #1,#2 \rangle}
\newcommand{\imp}{\Rightarrow}
\newcommand{\rk}{\mathrm{rk}}
\newcommand{\epsclass}[1]{\br{\eps}(#1)}
\newcommand{\rou}[1]{\mu(#1)}
\newcommand{\prm}{\ell}
\newcommand{\chdot}{_{\textrm{\tiny{$\bullet$}}}}
\newcommand{\tqr}[1]{Q(#1)}
\newcommand{\ndzl}[1]{\tilde{Q}(#1)}
\newcommand{\cyc}{\mathfrak{C}}
\newcommand{\bsch}[2]{\beta_#2^#1}
\newcommand{\stickstick}{\stick_{\mathrm{Stick}}}
\newtheorem{theorem}{Theorem}[section]
\newtheorem{prop}[theorem]{Proposition}
\newtheorem{lemma}[theorem]{Lemma}
\newtheorem{cor}[theorem]{Corollary}
\newtheorem{definition}[theorem]{Definition}
\newtheorem{conj}[theorem]{Conjecture}
\numberwithin{equation}{section}
\newenvironment{remark}
{
\vspace{0.25cm}
\begin{minipage}{11cm}
\begin{small}
\emph{Remark}.
}
{
\end{small}
\end{minipage}
\vspace{0.25cm}
}
\newenvironment{proof}{\emph{Proof}.}{\hspace{\stretch{1}}\rule{1.5ex}{1.5ex} \vspace{5 mm}}
\newenvironment{assumption}{\vspace{5 mm} \textbf{Assumption}. \vspace{5 mm}}
\begin{document}

\begin{center}
\LARGE{The canonical fractional Galois ideal at $s = 0$}
\end{center}

\begin{center}
\textsc{\large{Paul Buckingham}}
\end{center}

\begin{center}
\emph{University of Sheffield, UK}
\end{center}

\vspace{0.5cm}

\begin{abstract}
The Stickelberger elements attached to an abelian extension of number fields conjecturally participate, under certain conditions, in annihilator relations involving higher algebraic $K$-groups. In \cite{snaith:stark}, Snaith introduces canonical Galois modules hoped to appear in annihilator relations generalising and improving those involving Stickelberger elements. In this paper we study the first of these modules, corresponding to the classical Stickelberger element, and prove a connection with the Stark units in a special case.
\end{abstract}

\section{Introduction} \label{intro}

The analytic class number formula is a classical result demonstrating a local-to-global phenomenon in number fields. It relates the residue at $s = 1$ of the locally defined Dedekind $\zeta$-function $\zeta_K(s)$ of a number field $K$ to the globally defined class number $h_K$ of $K$ via a regulator. Reformulated in terms of the leading coefficient $\zeta_K^*(0)$ of $\zeta_K(s)$ at $s = 0$ using the functional equation for $\zeta_K(s)$, it says
\begin{equation} \label{acnf}
\frac{\zeta_K^*(0)}{R_K} = - \frac{h_K}{|\rou{K}|}
\end{equation}
where $R_K$ is the regulator of $K$ and $\rou{K}$ is the group of roots of unity in $K$. As a consequence of (\ref{acnf}), we see that the $\Z$-submodule $\Z \frac{\zeta_K^*(0)}{R_K}$ of $\C$ actually lies in $\Q$ and connects the annihilators of the $\Z$-modules $\rou{K}$ and $\class(K)$:
\[ \ann_\Z(\rou{K}) \Z \frac{\zeta_K^*(0)}{R_K} \con \ann_\Z(\class(K)) .\]

Now, if $k$ is a subfield of $K$ such that $K/k$ is Galois, then $\rou{K}$ and $\class(K)$ come with an action by the group $G = \gal{K/k}$, and one can ask whether it is possible to find non-trivial elements $\alpha \in \Q[G]$ such that
\[ \ann_{\Z[G]}(\rou{K}) \alpha \con \ann_{\Z[G]}(\class(K)) .\]
This more general setting has a classical answer, although it is only partial. It concerns Stickelberger's element, which we now turn to for motivation.

\subsection{The Stickelberger approach}

Let $K/\Q$ be a cyclotomic extension, of conductor $f$ say, and $G$ its Galois group. Stickelberger defined an element
\begin{equation}
\stickstick = \sum_{\stackrel{a=1}{(a,f)=1}}^f \frac{a}{f} (a,K/\Q)^{-1} \in \Q[G] \label{first stick} ,
\end{equation}
and the theorem of 1890 named after him states that any $\Z[G]$-multiple of it having integral coefficients annihilates the class group of $K$. A minor modification of $\stickstick$ can be generalized to arbitrary pairs $(K/k,S)$ where $K/k$ is any abelian\footnote{The word \emph{abelian} may be replaced by the word \emph{Galois} here, but we won't use this and so only give the definition for abelian extensions.} extension of number fields and $S$ a finite set of places of $k$ containing the infinite ones, namely
\[ \stick_{K/k,S}(1) = \sum_{\chi \in \hat{G}} L_{K/k,S}(0,\chi) e_{\br{\chi}} .\]

\begin{remark}
In fact, when $K/\Q$ is cyclotomic and $S$ is the set of places of $\Q$ consisting of the ones ramifying in $K/\Q$, $\stick_{K/\Q,S}(1) = \frac{1}{2}N - \stickstick$ where $N$ is the sum in $\Z[G]$ of the elements of $G = \gal{K/\Q}$. For a proof of this see \cite[p.268]{rosen:number}. Note, however, that the element we call $\stick_{K/k,S}(1)$ is called $\stick_{K/k,S}(0)$ there.
\end{remark}

Now, suppose that $k$ is totally real and $S$ contains the places which ramify in $K/k$. Then it was shown by Klingen that $\stick_{K/k,S}(1) \in \Q[G]$, and by Deligne and Ribet that
\[ \ann_{\Z[G]}(\mu(K)) \stick_{K/k,S}(1) \con \Z[G] ,\]
so in view of Stickelberger's Theorem, it became natural to ask: Do we have
\[ \ann_{\Z[G]}(\mu(K)) \stick_{K/k,S}(1) \con \ann_{\Z[G]}(\class(\oh_K)) \]
in general? This is Brumer's conjecture. Observing that $\mu(K) = \tors(K_1(\oh_K))$ and $\class(\oh_K) = \tors(K_0(\oh_K))$, the question arose of whether there ought to be analogues of Brumer's conjecture involving higher $K$-groups.

\begin{definition} \label{stick}
Let $(K/k,S)$ be as above. Then for an integer $n > 0$, we define the $n$th Stickelberger element to be
\[ \stick_{K/k,S}(n) = \sum_{\chi \in \hat{G}} L_{K/k,S}(1-n,\chi) e_{\br{\chi}} .\]
\end{definition}

\begin{remark}
The element we call $\stick_{K/k,S}(n)$ is called $\stick_{K/k,S}(1 - n)$ in \cite{rosen:number}.
\end{remark}

The following conjecture is a higher-dimensional analogue of the Brumer conjecture, posed when $k$ is totally real, $K$ is totally real or a CM-field, and $S$ contains the places which ramify in $K/k$. See \cite[Ch.7]{snaith:alg}.

\begin{conj} \label{higher brum}
For any $r < 0$, 
\[ \ann_{\Z[G]}(\tors(K_{1-2r}(\oh_{K,S}))) \stick_{K/k,S}(1-r) \con \ann_{\Z[G]}(K_{-2r}(\oh_{K,S})) .\]
\end{conj}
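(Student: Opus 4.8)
The plan is to fix a prime $\ell$, translate the statement into $\ell$-adic étale cohomology, and then reduce it to the Iwasawa Main Conjecture over totally real fields. Writing $n = 1 - r \geq 2$, the Quillen--Lichtenbaum conjecture --- a theorem of Voevodsky, Rost and Weibel for odd $\ell$ --- furnishes isomorphisms of $\Zprm[G]$-modules
\[ \tors(K_{1-2r}(\oh_{K,S})) \ten \Zprm \iso \het^1(\oh_{K,S},\Zprm(n))_{\tors}, \qquad K_{-2r}(\oh_{K,S}) \ten \Zprm \iso \het^2(\oh_{K,S},\Zprm(n)), \]
the prime $2$ requiring separate treatment via the narrow class group and $2$-adic $L$-functions. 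Under these identifications, and since the $\chi$-component of $\stick_{K/k,S}(1-r)$ is $L_{K/k,S}(r,\chi)$ --- a rational number for $r < 0$ by Klingen--Siegel, because $k$ is totally real and $K$ totally real or CM --- Conjecture \ref{higher brum} becomes exactly (an equivalent form of) the Coates--Sinnott conjecture: $\ann_{\Zprm[G]}(\het^1(\oh_{K,S},\Zprm(n))_{\tors}) \cdot \stick_{K/k,S}(1-r) \con \ann_{\Zprm[G]}(\het^2(\oh_{K,S},\Zprm(n)))$.

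For $k = \Q$, so $K/\Q$ abelian, one can then invoke the proof of the equivariant Tamagawa number conjecture for the Tate motives $\Q(n)$ due to Burns--Greither and Flach, which yields the precise Fitting-ideal refinement and hence the annihilator statement. In general the route is via Iwasawa theory: pass up the cyclotomic $\Zprm$-extension $k_\infty/k$, set $K_m = K k_m$, $\Lambda = \Zprm\pwr{\gal{K_\infty/k}}$, and form the finitely generated torsion $\Lambda$-module $X_\infty = \pro_m \het^2(\oh_{K_m,S},\Zprm(n))$. The Stickelberger elements $\stick_{K_m/k,S}(1-r)$ are compatible under corestriction up to Euler factors at $S$ and interpolate $\ell$-adically to the element built from the Deligne--Ribet / Cassou-Nogu\`es $\ell$-adic $L$-function, and the Main Conjecture over totally real fields (Mazur--Wiles for $k = \Q$, Wiles in general) identifies the characteristic ideal $\chr_\Lambda(X_\infty)$ with the ideal this element generates; together with the analogous description of $\pro_m \het^1(\cdots)_{\tors}$ and the vanishing of the $\mu$-invariant (Ferrero--Washington over $\Q$; assumed otherwise) this gives the containment of ideals in $\Lambda$ up to pseudo-null modules.

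The main obstacle is the descent from the $\Lambda$-level statement to the exact annihilator statement over $\Zprm[G]$ at finite level. The Main Conjecture is a statement about characteristic ideals, i.e.\ about $\Lambda$-modules only up to pseudo-isomorphism, whereas annihilators --- unlike Fitting ideals --- are not well-behaved in exact sequences and do not commute with the specialisation $X_\infty/(\gamma - 1)X_\infty \to \het^2(\oh_{K,S},\Zprm(n))$. The standard remedy is to first upgrade to a statement about $\fitt_\Lambda$, then descend using a control theorem together with the fact that $\fitt_{\Zprm[G]}(M)$ annihilates $M$, and finally to estimate the finite pseudo-null error terms --- which under the present hypotheses (ensuring, via Soul\'e's theorem and the functional equation, that $\het^2$ is finite and the relevant $L$-values have the expected order of vanishing) are controlled by the order of $\tors(K_{1-2r}(\oh_{K,S}))$. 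Reassembling over all $\ell$, with the separate $2$-adic argument, then yields Conjecture \ref{higher brum} over $\Z[G]$.
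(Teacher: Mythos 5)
This statement is a \emph{conjecture}, not a theorem: the paper poses Conjecture~\ref{higher brum} as a higher-dimensional analogue of the Brumer conjecture (citing \cite[Ch.7]{snaith:alg}) and offers no proof of it; indeed the point of the surrounding discussion is that Conjecture~\ref{higher brum} becomes trivial when the relevant $L$-values vanish, which is precisely why Snaith introduces the fractional ideals $\hJ{r}{K/k}$ and proposes the refinement Conjecture~\ref{gen brum}. So there is nothing in the paper to compare your argument against. The closest the paper comes is observing that Conjecture~\ref{gen brum} is verified by Snaith for $k=\Q$ in \cite[Section 6]{snaith:stark} \emph{assuming} the Quillen--Lichtenbaum conjecture, but that is a different statement.

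As for the proposal considered on its own terms: it correctly records the standard attack on the Coates--Sinnott conjecture (Quillen--Lichtenbaum to convert $K$-theory to \'etale cohomology, ETNC for $\Q(n)$ over $\Q$ via Burns--Greither/Flach, the Main Conjecture of Wiles over totally real fields at the $\Lambda$-level), but it does not close the gaps it identifies, and those gaps are exactly where the conjecture remains open. The descent from $\chr_\Lambda(X_\infty)$ to $\ann_{\Zprm[G]}(\het^2(\oh_{K,S},\Zprm(n)))$ is not a ``standard remedy'': characteristic ideals see $\Lambda$-modules only up to pseudo-isomorphism, the Main Conjecture gives at best the Fitting ideal over $\Lambda$ after substantial additional work on the structure of $X_\infty$ (cf.\ Greither, Kurihara, Nickel), and the control map $X_\infty/(\gamma-1)X_\infty \to \het^2(\oh_{K,S},\Zprm(n))$ has kernel and cokernel that are not controlled by $\tors(K_{1-2r}(\oh_{K,S}))$ in the way you assert. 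One must also reconcile the Euler factors at $S$ between $\stick_{K/k,S}(1-r)$ and the Deligne--Ribet $\ell$-adic $L$-function, and the vanishing of $\mu$ is a theorem only for $k = \Q$ (Ferrero--Washington): you flag it as ``assumed otherwise,'' which means the argument is conditional in general. The prime $2$ is, as you note, entirely waved away. In short, the plan is the right skeleton but the places you mark as routine are the open content; no proof of Conjecture~\ref{higher brum} follows from it.
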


However, in the situation when all the $L$-functions $L_{K/k,S}(s,\chi)$ vanish at $r$, this conjecture becomes trivial. Under the assumption that $K/k$ satisfies the higher Stark conjectures, Snaith shows in \cite{snaith:stark} how one can attach to $K/k$ a family $\{\hJ{r}{K/k}\}_{r \in \Z_{<0}}$ of $\Z[G]$-submodules of $\Q[G]$, hoped to appear in a generalization of Conjecture \ref{higher brum}. Namely,

\begin{conj} \label{gen brum}
If $K/k$ is an abelian extension of number fields with Galois group $G$ and $S$ contains the places which ramify in $K/k$, then for each odd prime $\prm$ and each $r < 0$,
\begin{eqnarray*}
\ann_{\Zprm[G]}(\tors(K_{1-2r}(\oh_{K,S})) \teno{\Z} \Zprm) \hJ{r}{K/k} \cap \Zprm[G] \\
\con \ann_{\Zprm[G]}(K_{-2r}(\oh_{K,S}) \teno{\Z} \Zprm) .
\end{eqnarray*}
\end{conj}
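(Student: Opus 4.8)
The plan is to translate the statement into $\prm$-adic étale cohomology, where both Snaith's module $\hJ{r}{K/k}$ and the $K$-groups appearing have transparent descriptions, and then to exploit the cases in which the underlying higher Stark conjecture is known — which is exactly where the connection with Stark units enters. First I would invoke the Quillen--Lichtenbaum isomorphisms (now a theorem, via the Bloch--Kato conjecture): writing $n = 1 - r \geq 2$ and $\prm$ odd, one has $K_{1-2r}(\oh_{K,S}) \teno{\Z} \Zprm \iso \het^1(\oh_{K,S}[1/\prm],\Zprm(n))$ and $K_{-2r}(\oh_{K,S}) \teno{\Z} \Zprm \iso \het^2(\oh_{K,S}[1/\prm],\Zprm(n))$, while $\tors(K_{1-2r}(\oh_{K,S})) \teno{\Z} \Zprm \iso \het^0(\oh_{K,S}[1/\prm],\Qprm/\Zprm(n))$ (after, if necessary, enlarging $S$ to contain the places above $\prm$ and tracking the resulting change of module). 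The conjecture then becomes the purely arithmetic assertion \[ \ann_{\Zprm[G]}\big(\het^0(\oh_{K,S}[1/\prm],\Qprm/\Zprm(n))\big)\,\hJ{r}{K/k} \cap \Zprm[G] \con \ann_{\Zprm[G]}\big(\het^2(\oh_{K,S}[1/\prm],\Zprm(n))\big), \] a statement about the Galois-module structure of $\het^2$.

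Next I would unpack the definition of $\hJ{r}{K/k}$ from \cite{snaith:stark}. Granting the higher Stark conjecture, $\stick_{K/k,S}(1-r)$ is the determinant of the Beilinson (equivalently the $\prm$-adic Soulé) regulator on a certain $\Q[G]$-lattice spanned by $K$-theory classes, and Snaith's module is obtained by dividing out this regulator; thus $\hJ{r}{K/k}$ is, up to the denominators that make it fractional, the $\Z[G]$-lattice generated by the images under the inverse regulator of an integral basis of $\het^1$, multiplied against the relevant annihilator data. The gain from this reformulation is that a generator of the $\prm$-part of $\hJ{r}{K/k}$ should be represented by an explicit norm-coherent family of cohomology classes — the Soulé cyclotomic elements for $r < 0$, and in the boundary case $r = 0$ treated in this paper, the classical cyclotomic/Stark units in $\oh_{K,S}\st$. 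Establishing this identification — that $\hJ{r}{K/k}$ (or its $\prm$-part) is generated by the Stark element — is the concrete connection promised in the abstract, and it is available precisely when the pertinent case of the higher Stark conjecture holds (Stark's theorem for abelian $K/\Q$ at $s=0$; Beilinson's theorem for the Tate twists over cyclotomic fields).

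With such an explicit generator in hand, I would deduce the annihilation of $\het^2(\oh_{K,S}[1/\prm],\Zprm(n))$ by an Euler system / Iwasawa-theoretic argument: the cyclotomic elements form an Euler system whose Kolyvagin derivative classes bound $\het^2$, or, equivalently, one invokes the relevant main conjecture — the Iwasawa main conjecture over $\Q$ (Mazur--Wiles) for the class-group case $r=0$, and Kato's or Huber--Kings' results for the higher twists — to equate the characteristic ideal of $\het^2$ with the ideal generated by $\stick_{K/k,S}(1-r)$, and then to descend along the cyclotomic tower. Classically, near $r = 0$ this annihilation is Stickelberger's theorem via Gauss sums, sharpened by Thaine's theorem and Rubin's work; the entire point of the \emph{fractional} ideal $\hJ{r}{K/k}$ is that it supplies exactly the $\Z[G]$-multiples of $\stick_{K/k,S}(1-r)$ that remain after clearing the regulator denominators and the $\het^0$-torsion.

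I expect the main obstacle to be twofold. First, the conjecture in full generality rests on the higher Stark conjecture, which is open outside abelian extensions of $\Q$ and a handful of rank-one situations; hence only a special case is provable unconditionally, and pinning down the right special case — one in which the Stark element genuinely exists and generates $\hJ{r}{K/k}$ — is itself delicate. Second, even granting the relevant Stark case, controlling the fractional denominators and their interaction with the $\mu$-part (trivial zeros of the $L_{K/k,S}(s,\chi)$, and the failure of the regulator to be an integral isomorphism) so that the annihilation holds on the nose rather than up to a bounded power of $\prm$ requires a careful local analysis at the primes of $S$ and at $\prm$: the term ``$\cap \Zprm[G]$'' must be shown to land inside the \emph{full} annihilator of $K_{-2r}(\oh_{K,S})\teno{\Z}\Zprm$, not merely a finite-index subideal of it.
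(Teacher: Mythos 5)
This statement is presented in the paper as a \emph{conjecture}, not a theorem: the paper explicitly says that Conjecture \ref{gen brum} ``is formulated in \cite[Section 5.1]{snaith:stark}, and verified when $k = \Q$ in \cite[Section 6]{snaith:stark} assuming the Quillen--Lichtenbaum conjecture,'' and leaves it open in general. There is therefore no proof in the paper to compare your proposal against, and no unconditional proof is known; the present paper's contribution is to define a new $r=0$ analogue $\J{K/k,S}$ and to establish a relation with Stark units for it, not to prove Conjecture \ref{gen brum}.

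That said, a few remarks on the sketch itself. Your outline correctly identifies the relevant circle of ideas --- Quillen--Lichtenbaum to pass to \'etale cohomology, Soul\'e's cyclotomic elements, the Euler system / main conjecture machinery --- and this is essentially how Snaith treats the $k=\Q$ case in \cite{snaith:stark}. But the sketch drifts freely between $r<0$ (the range of the conjecture) and $r=0$ (the range of this paper), and the description of $\hJ{r}{K/k}$ as being ``generated by the Stark element'' overstates what is true: Snaith's module is built from determinants of endomorphisms moving a cohomology lattice into a regulator lattice, and a single Stark-type generator is only available under strong rank-one hypotheses of exactly the type analysed in Section \ref{assump} of the present paper. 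Finally, the closing caveat about denominators and landing in the full annihilator is real and is precisely why the statement remains conjectural outside $k=\Q$; a proposal that ends by conceding that the argument only works in special cases, and does not identify which special cases or supply the reductions, is not a proof but a research plan. None of this is a criticism of your understanding of the landscape, only a clarification that the paper itself never claims this result.
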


Conjecture \ref{gen brum} is formulated in \cite[Section 5.1]{snaith:stark}, and verified when $k = \Q$ in \cite[Section 6]{snaith:stark} assuming the Quillen--Lichtenbaum conjecture (which relates \'etale cohomology to $K$-theory).

In the present paper, we define an $r = 0$ analogue of the fractional ideals $\hJ{r}{K/k}$. This will be a canonical $\Z[G]$-submodule $\J{K/k,S}$ of $\Q[G]$ whose existence relies upon the truth of the ``Stark conjecture at $0$'', \ie the classical Stark conjecture. In Section \ref{desc of j}, we will describe $\J{K/k,S}$ in terms of \emph{Stark units} (see Section \ref{abKkS section}) under an assumption on the orders of vanishing of the $L$-functions of the extension $K/k$, and work it out explicitly in some special cases (see Section \ref{examples of j}).

The work of Sections \ref{special case} and \ref{examples of j} will be combined with the main result of Section \ref{class-groups}, Theorem \ref{ann theorem}, to prove a relation of the fractional ideal with class-groups. Theorem \ref{ann theorem} connects the annihilator ideals of class-groups and certain quotients of $S$-units. In fact, such a connection has been looked at before. In \cite{cg:fitting} Cornacchia and Greither prove

\begin{theorem} \label{corn-greith}
Let $G = \gal{K/\Q}$ where $K/\Q$ is a totally real abelian extension of prime-power conductor. Then the $\Z[G]$-Fitting ideals of $\oh_K\st/{\cyc_K}$ and $\class(K)$ are equal, where $\cyc_K$ denotes the cyclotomic units in $K$.
\end{theorem}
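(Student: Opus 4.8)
The plan is to establish the equality of Fitting ideals by constructing, for each prime $p$ dividing the conductor-power, an explicit four-term or short exact sequence linking the two modules after localisation, and then invoking the behaviour of Fitting ideals in exact sequences together with a count that forces equality rather than mere inclusion. Since Fitting ideals commute with $\Z_p$-base change and a $\Z[G]$-module is determined by its $p$-localisations for all $p$, I would fix a prime $p$ and work over $\Z_p[G]$. The totally real, prime-power-conductor hypothesis is exactly what makes the relevant Iwasawa-theoretic $\mu$-invariant vanish and what gives a clean handle on the module of cyclotomic units, so I would bring in the Iwasawa Main Conjecture (Mazur--Wiles) for the cyclotomic $\Z_p$-extension: it identifies the characteristic ideal of the relevant Iwasawa module of class groups with that of the quotient of global units by cyclotomic units, and the prime-power-conductor assumption prevents the usual ``trivial zero''/extra-factor discrepancies that would otherwise only yield an inclusion.

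Concretely, the key steps, in order, would be: (1) reduce to a statement over $\Z_p[G]$ and further, via the idempotent decomposition, to each $\Z_p$-component (character by character, or rather $p$-adic-character-orbit by orbit); (2) identify $\oh_K\st/\cyc_K$ with the Galois-module cokernel appearing in Sinnott's or Kolyvagin's description of cyclotomic units, using that for prime-power conductor Sinnott's index formula has index $1$ (no extra $2$-power or Euler-factor contributions) so that the ``cyclotomic unit module'' is as large as possible; (3) deploy the Main Conjecture to match characteristic ideals of the corresponding Iwasawa modules; (4) descend from the Iwasawa module to finite level using a control theorem, checking that the descent is exact (no error terms) precisely because of the prime-power-conductor hypothesis — this is where one upgrades ``same characteristic ideal'' to ``same Fitting ideal''; and (5) assemble the local equalities into the global statement. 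A cleaner alternative for step (3)--(4), which I would actually prefer to present, is to build directly a short exact sequence $0 \ra \oh_K\st/\cyc_K \ra M \ra \class(K) \ra 0$ of $\Z_p[G]$-modules with $M$ of projective dimension $\le 1$ and trivial Fitting ideal behaviour — i.e. $\fitt(M)$ generated by a non-zero-divisor — forcing $\fitt(\oh_K\st/\cyc_K) \cdot \fitt(\class(K)) = \fitt(M)$ and then showing both sides of the desired equality are squarefree-compatible so that the two factorisations must coincide; here one uses that over a totally real base the Stickelberger/Stark regulator is a unit away from the relevant primes.

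The main obstacle, I expect, is step (4): controlling the descent from the infinite level to $K$ without picking up spurious factors. Fitting ideals, unlike characteristic ideals, are not well-behaved under quotients in general — a surjection $M \twoheadrightarrow M'$ only gives $\fitt(M) \subseteq \fitt(M')$ — so the Main Conjecture alone gives containment, and one must exhibit genuine exact sequences with projective (not merely finitely generated) middle or outer terms to get equality. The prime-power-conductor hypothesis is what saves us: it guarantees that the relevant local units modulo cyclotomic units have no pseudo-null defect and that the cohomological dimension bookkeeping is exact, so that the ambiguous term in the descent vanishes. I would therefore spend the bulk of the argument verifying that the module $M$ (or the cokernel term at finite level) has projective dimension exactly $1$ over $\Z_p[G]$ — equivalently that it is cohomologically trivial as a $G$-module — which reduces, via Nakayama, to a finite computation of Tate cohomology groups $\hat H^i(G', -)$ for all subgroups $G' \le G$, and it is precisely the prime-power-conductor case in which these vanish.
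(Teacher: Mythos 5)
The paper does not prove Theorem~\ref{corn-greith}; it is quoted verbatim from Cornacchia and Greither~\cite{cg:fitting}, so there is no internal argument to compare against. A few remarks on your sketch on its own merits. The broad outline --- reduce to $\Z_p[G]$ prime by prime, use that Sinnott's index is trivial in the prime-power-conductor case so that $\cyc_K$ is as large as possible, invoke the Mazur--Wiles Main Conjecture for the $p$-part, and descend --- is close in shape to what Cornacchia and Greither actually do, and your caution that the Main Conjecture alone yields only a containment of Fitting ideals (since Fitting ideals are not well behaved under quotients) correctly identifies the central difficulty.

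The genuine gap is in your ``cleaner alternative.'' There is no short exact sequence of $\Z_p[G]$-modules of the form
\[
0 \ra \oh_K\st/\cyc_K \ra M \ra \class(K) \ra 0
\]
with $M$ of projective dimension $\le 1$; these two modules do not sit inside one another in that way. What the ETNC/Main-Conjecture machinery actually produces --- and what the present paper uses in Section~\ref{class-groups} via the Burns--Greither complex $C_m$ --- is a two-term perfect complex whose degree-$1$ and degree-$2$ cohomology groups are, respectively, the units-mod-cyclotomic-units and (an extension of $X$ by) the class group, together with a computation of its determinant. From such a complex one extracts an inclusion of Fitting ideals only after checking a cyclicity-of-the-dual hypothesis (Proposition~\ref{ann prop}), and equality requires a separate global count (Sinnott's index formula combined with the analytic class number formula), not a projective-dimension argument on a nonexistent middle term. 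Your proposed step of verifying cohomological triviality of $M$ by Tate cohomology therefore has nothing to attach to; the correct replacement is the module-theoretic analysis in~\cite{cg:fitting} showing the relevant duals are $G$-cyclic, which is precisely where the prime-power-conductor hypothesis enters.
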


(See \cite[Section 8.1]{wash:cyc}, \cite[Section 4]{sinnott:stick} or \cite[Section 1]{cg:fitting} for a definition of the cyclotomic units in $K$. Note that they are called \emph{circular units} in \cite{sinnott:stick}.) Recall that the Fitting ideal of a module is always contained in the annihilator ideal, but that equality need not hold. However, as was pointed out by the reviewer, Theorem \ref{corn-greith} implies Theorem \ref{ann theorem} directly. Indeed, since the Pontryagin dual of the particular quotient $U(\prm)/\E^+(\prm)$ appearing in Theorem \ref{ann theorem} is cyclic as a Galois module, and since further the Galois group is cyclic, the Fitting ideal of $U(\prm)/\E^+(\prm)$ is the whole annihilator ideal. $U(\prm)/\E^+(\prm)$ can be shown to be just the $\prm$-part of the units modulo cyclotomic units, and then Theorem \ref{corn-greith} can be applied.

The author is further grateful to the reviewer for bringing to his attention a paper of Rubin (\cite{rubin:global}). Indeed, that paper, which generalizes work of Thaine in \cite{thaine:classgroup}, can also be used to prove Theorem \ref{ann theorem} -- see \cite[Thms 1.3, 2.2]{rubin:global}. This notwithstanding, the theory here is hoped to be applicable in a wider setting and as part of a general principle.

\begin{center}
\textbf{Acknowledgments}
\end{center}

The author would like to thank Victor Snaith for all his help, and the reviewer for many valuable suggestions.

\section{Definition of $\J{K/k,S}$} \label{sec def j}

For the rest of the paper, let us fix once and for all an algebraic closure $\br{\Q}$ of $\Q$ in which our number fields are to lie. $\br{\Q}$ should be thought of as being distinct from the algebraic closure $\qbrc$ which we will choose later for group characters to have their values in.

For the moment we take $K/k$ to be any Galois extension of number fields, although $\J{K/k,S}$ will be defined only for abelian extensions. Again, $S$ will be a finite set of places of $k$ containing the infinite ones, and $S_K$ the set of places of $K$ above those in $S$. We write $\oh_{K,S}$ for $\oh_{K,S_K}$.

\subsection{The Stark regulator} \label{stark reg}

Define $X$ to be the $\Z[G]$-submodule of degree-zero elements in the free abelian group on $S_K$ (with the natural $G$-action). Then we have the Dirichlet regulator map
\begin{eqnarray*}
\lambda : \oh_{K,S}\st \teno{\Z} \R &\ra& X \teno{\Z} \R \\
u &\mapsto& \sum_{w \in S_K} \log \|u\|_w w .
\end{eqnarray*}
This is an isomorphism of $\R[G]$-modules. Now, by the remark immediately after the proof of \cite[Prop.33]{serre:linear} (found in Section 12.1 there), any two $\Q$-representations which become isomorphic over $\C$ must necessarily be isomorphic over $\Q$. Hence there is a $\Q[G]$-module isomorphism
\[ f : \oh_{K,S}\st \teno{\Z} \Q \ra X \teno{\Z} \Q .\]
(We point out, however, that there is in general no canonical choice for $f$.) Then for a finitely generated $\C[G]$-module $V$, let $R_V^f$ denote the determinant of the $\C$-linear map
\begin{eqnarray*}
\Hom_{\C[G]}(V^*,X \teno{\Z} \C) &\ra& \Hom_{\C[G]}(V^*,X \teno{\Z} \C) \\
\phi &\mapsto& \lambda \of f^{-1} \of \phi ,
\end{eqnarray*}
where $V^*$ is the contragredient representation of $V$, and set
\[ \mathcal{R}^f(V) = R_V^f/{L_{K/k,S}^*(0,V)} \]
where $L_{K/k,S}^*(0,V)$ denotes the leading coefficient of the Laurent series of the $L$-function at $s = 0$. If $V$ has character $\chi$, we also write $R_\chi^f = R_V^f$ and $\mathcal{R}^f(\chi) = \mathcal{R}^f(V)$. We call the non-zero complex number $R_\chi^f$ the Stark $f$-regulator for $\chi$.

\subsection{Stark's conjecture} \label{sec stark}

We reproduce here Stark's conjecture as formulated in \cite[Ch.I, Section 5]{tate:stark}. $K/k$ is still an arbitrary Galois extension of number fields with Galois group $G$.

\begin{conj} \label{stark conj}
Let $\chi$ be a (not-necessarily irreducible) character of $G$.

\begin{tabular}{cp{9.5cm}l}
(i) & $\mathcal{R}^f(\chi) \in \Q(\chi)$. \\
(ii) & For every $\sigma \in \gal{\Q(\chi)/\Q}$, $\mathcal{R}^f(\chi)^\sigma = \mathcal{R}^f(\chi^\sigma)$.
\end{tabular}
\end{conj}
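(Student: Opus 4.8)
Since Conjecture \ref{stark conj} as stated is Stark's conjecture in Tate's formulation, I do not expect a proof in full generality; the plan is twofold: (a) to record the standard \emph{reductions} showing the statement is robust, and (b) to prove the cases that are genuinely available, above all $k = \Q$ with $K/\Q$ abelian, which is the setting used later in this paper.

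For (a) I would first verify that the truth of (i) and (ii) does not depend on the auxiliary $\Q[G]$-isomorphism $f$. Given a second one $f'$, the composite $h = f \of (f')^{-1}$ lies in $\Aut_{\Q[G]}(X \teno{\Z} \Q)$, and straight from the definition $R_\chi^{f'} = \Det\!\big(\phi \mapsto h \of \phi \mid \Hom_{\C[G]}(V^*, X \teno{\Z} \C)\big)\, R_\chi^f$; the displayed determinant involves no $L$-values, lies in $\Q(\chi)$, and transforms under $\gal{\Q(\chi)/\Q}$ exactly as (ii) requires, being the determinant of a $\Q$-rational operator restricted to the $\chi$-part. Hence (i) and (ii) hold for $f$ iff they hold for $f'$. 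Next I would record the multiplicativities $R^f_{\chi + \chi'} = R^f_\chi R^f_{\chi'}$ and $L^*_{K/k,S}(0,\chi + \chi') = L^*_{K/k,S}(0,\chi)\,L^*_{K/k,S}(0,\chi')$, together with the compatibility with $\chi \mapsto \chi^\sigma$; these reduce (i) and (ii) for all $\chi$ to the irreducible $\chi$, and show the whole conjecture is equivalent to the single clean assertion that $\mathcal{R}^f$ is $\gal{\qbrc/\Q}$-equivariant, i.e.\ $\mathcal{R}^f(\chi^\sigma) = \mathcal{R}^f(\chi)^\sigma$ for all $\sigma$ (which in particular forces each value $\mathcal{R}^f(\chi)$ to be algebraic).

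For (b), take $k = \Q$ and $K/\Q$ abelian. Each irreducible $\chi$ is then one-dimensional, $L_{K/\Q,S}(s,\chi)$ is up to finitely many Euler factors at $S$ a Dirichlet $L$-function, and its leading term at $s = 0$ is known explicitly: for $\chi$ nontrivial, $L^*(0,\chi)$ is a nonzero algebraic multiple of a $\C$-linear combination of the numbers $\log\|u\|_w$ with $u$ a cyclotomic (equivalently, Stark) unit. Substituting this into the definition of $R_\chi^f$ and expressing $R_\chi^f$ via a $\Q$-rational basis of $\oh_{K,S}\st \teno{\Z} \Q$ drawn from cyclotomic units, the transcendental parts — the logarithms — cancel between numerator and denominator, leaving $\mathcal{R}^f(\chi) = R_\chi^f/L^*_{K/\Q,S}(0,\chi)$ in $\Q(\chi)$; it is $\gal{\qbrc/\Q}$-equivariant because $\lambda$, the lattices $\oh_{K,S}\st$ and $X$, and $f$ are all defined over $\Q$, while the $L$-values move Galois-equivariantly (Siegel--Klingen rationality together with the explicit Dirichlet formulas). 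This is Tate's argument after Stark, and I would cite \cite[Ch.I, Section 5]{tate:stark}; alternatively one can pass through the identification of Stark units with cyclotomic units and feed in a Cornacchia--Greither/Rubin-type statement as in Theorem \ref{corn-greith} and \cite{rubin:global}.

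The main obstacle is precisely what keeps the conjecture open: for $k \neq \Q$, or for characters $\chi$ of dimension $> 1$, there is no explicit formula for $L^*_{K/k,S}(0,\chi)$ of the kind available over $\Q$, so neither the rationality nor the Galois-equivariance of the ratio $R^f_\chi/L^*_{K/k,S}(0,\chi)$ can be read off. Brauer induction reduces matters to monomial $\chi$, but one is then confronted with the leading terms at $0$ of Hecke $L$-functions of intermediate fields, which is the heart of the difficulty. Accordingly, in the body of this paper I would not attempt the general case but would \emph{assume} Conjecture \ref{stark conj} (as is standard, e.g.\ for the higher Stark conjectures), using the $f$-independence from (a) precisely so that the resulting construction of $\J{K/k,S}$ is canonical.
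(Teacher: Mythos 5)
The statement is Stark's conjecture, which the paper does not (and could not) prove: it is stated as a conjecture, and the remark that follows simply cites \cite[Ch.I, Section 7]{tate:stark} for the $f$- and $S$-independence, and observes that the conjecture is known when $K/\Q$ is abelian. Your proposal — recording those same reductions, sketching the abelian-over-$\Q$ case, and noting that the general case remains open so the paper will \emph{assume} the conjecture — matches the paper's treatment exactly.
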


\begin{remark}
The truth of Conjecture \ref{stark conj} is shown in \cite[Ch.I, Section 7]{tate:stark} to be independent of the set $S$ and the choice of $\Q[G]$-module isomorphism $f$, and hence is a property solely of the extension. Stark's conjecture is known to hold whenever $K/\Q$ is abelian.
\end{remark}

Assume, now, that $K/k$ satisfies Stark's conjecture and let $\qbrc$ be the algebraic closure of $\Q$ in $\C$. Then we see immediately that

\begin{prop} \label{algebraic}
For any choice of $f : \oh_{K,S}\st \teno{\Z} \Q \ra X \teno{\Z} \Q$, the element $\mathcal{R}^f$ of $\Hom(R(G),\C\st)$ lies in $\Hom_{G_\Q}(R(G),(\qbrc)\st)$, where $G_\Q = \gal{\qbrc/\Q}$ and $R(G)$ is the representation ring of $G$.
\end{prop}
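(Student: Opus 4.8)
The plan is to unwind the definitions and check that Stark's conjecture, stated character-by-character in Conjecture \ref{stark conj}, is exactly the assertion that $\mathcal{R}^f \in \Hom_{G_\Q}(R(G),(\qbrc)\st)$. First I would recall that $\mathcal{R}^f$ is a priori an element of $\Hom(R(G),\C\st)$: it is additive on $R(G)$ because both $V \mapsto L_{K/k,S}^*(0,V)$ and $V \mapsto R_V^f$ are multiplicative on direct sums (the former by the standard additivity of Artin $L$-functions under direct sums of representations, the latter because $\Hom_{\C[G]}(V^*, X \otimes \C)$ and the map $\phi \mapsto \lambda \circ f^{-1} \circ \phi$ decompose as a direct sum over a decomposition of $V$, so its determinant multiplies), and it is nonzero in each coordinate because $R_\chi^f \neq 0$ for every $\chi$. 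Hence $\mathcal{R}^f$ is determined by its values $\mathcal{R}^f(\chi)$ on the characters $\chi$ of $G$, which span $R(G) \otimes \Q$.

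Next I would spell out what membership in $\Hom_{G_\Q}(R(G),(\qbrc)\st)$ means concretely. The group $G_\Q = \gal{\qbrc/\Q}$ acts on $R(G)$ through its quotient $\gal{\Q(\boldmu_{|G|})/\Q}$ by acting on characters, $\chi \mapsto \chi^\sigma$, and it acts on $(\qbrc)\st$ tautologically. A homomorphism $\psi \in \Hom(R(G),\C\st)$ lands in $\Hom_{G_\Q}(R(G),(\qbrc)\st)$ precisely when (a) $\psi(\chi) \in \qbrc$ for every $\chi$, and (b) $\psi(\chi)^\sigma = \psi(\chi^\sigma)$ for every $\sigma \in G_\Q$ and every $\chi$ — and one should note that for a fixed $\chi$, only the image of $\sigma$ in $\gal{\Q(\chi)/\Q}$ matters, since $\chi^\sigma$ and the number $\psi(\chi)$ both depend on $\sigma$ only through that quotient. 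But (a) is condition (i) of Conjecture \ref{stark conj} (using $\Q(\chi) \subseteq \qbrc$), and (b) is condition (ii). So the two conditions match up term for term.

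The only genuine point requiring a word of care — and the step I expect to be the mild obstacle — is the compatibility between the $G_\Q$-action on $R(G)$ through characters and the condition as phrased over the \emph{individual} fields $\Q(\chi)$: one must check that the collection of local conditions ``$\mathcal{R}^f(\chi)^\sigma = \mathcal{R}^f(\chi^\sigma)$ for all $\sigma \in \gal{\Q(\chi)/\Q}$'', ranging over all $\chi$, assembles into a single $G_\Q$-equivariance statement. This is straightforward: given $\sigma \in G_\Q$ and $\chi$, restrict $\sigma$ to $\Q(\chi)$ (noting $\Q(\chi)/\Q$ is Galois since $G$ is finite and $\Q(\boldmu_{|G|})/\Q$ is abelian), and apply (ii); conversely the $G_\Q$-equivariance immediately specializes to each $\chi$. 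Since $\mathcal{R}^f$ is determined by its values on characters and the characters generate $R(G)$, equivariance on characters gives equivariance on all of $R(G)$. Putting these together yields the proposition for every choice of $f$, which is all that is claimed.
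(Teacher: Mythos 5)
Your proposal is correct and matches the paper's intent: the paper simply states the proposition as an immediate consequence of Stark's conjecture (Conjecture \ref{stark conj}), and your argument is exactly the expected unwinding — $\mathcal{R}^f$ is a homomorphism by multiplicativity of $L$-functions and regulators on direct sums, and $G_\Q$-equivariance with values in $(\qbrc)\st$ is precisely conditions (i) and (ii) of the conjecture, specialized through the quotients $\gal{\Q(\chi)/\Q}$.
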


From now on, $K/k$ is assumed to be abelian. Consider the group isomorphism
\begin{eqnarray*}
\varphi_G : \Hom(R(G),(\qbrc)\st) &\ra& \qbrc[G]\st \\
h &\mapsto& \sum_{\chi \in \hat{G}} h(\chi) e_\chi .
\end{eqnarray*}
$\varphi_G$ restricts to give an isomorphism between $\Hom_{G_\Q}(R(G),(\qbrc)\st)$ and $\Q[G]\st$, and so by Proposition \ref{algebraic}, $\varphi_G(\mathcal{R}^f) \in \Q[G]\st$.

Since $X$ naturally embeds into $X \teno{\Z} \Q$, we can define $\I^f$ to be the $\Z[G]$-submodule of $\Q[G]$ generated by
\[ \{\Det_{\Q[G]}(\phi) \sat \phi \in \End_{\Q[G]}(X \teno{\Z} \Q), \phi \of f(\oh_{K,S}\st) \con X \} .\]
(Any $\Q[G]$-module is projective, so $\Det_{\Q[G]} : \End_{\Q[G]}(X \teno{\Z} \Q) \ra \Q[G]$ is a well-defined function.)

\begin{lemma} \label{det lemma}
Let $W$ be a finitely generated $\C[G]$-module, $u \in \Aut_{\C[G]}(W)$, and $V$ a one-dimensional representation of $G$ with character $\chi$. Then the determinant of the $\C$-linear map
\begin{eqnarray*}
\Hom_{\C[G]}(V,W) &\ra& \Hom_{\C[G]}(V,W) \\
\phi &\mapsto& u \of \phi
\end{eqnarray*}
is $\chi(\Det_{\C[G]}(u))$.
\end{lemma}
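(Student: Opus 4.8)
The plan is to reduce the computation of the determinant of the map $\phi \mapsto u \circ \phi$ on $\Hom_{\C[G]}(V,W)$ to a linear-algebra statement about how the scalar action on $\Hom_{\C[G]}(V,W)$ interacts with that on $W$. First I would observe that, since $V$ is one-dimensional with character $\chi$, the group ring acts on $V$ through the character $\chi$; concretely, fixing a basis vector $v_0$ of $V$, evaluation at $v_0$ gives a $\C$-linear isomorphism
\[
\mathrm{ev}_{v_0} : \Hom_{\C[G]}(V,W) \xrightarrow{\ \sim\ } e_\chi W,
\]
where $e_\chi = \frac{1}{|G|}\sum_{g \in G} \chi(g)^{-1} g$ is the idempotent cutting out the $\chi$-isotypic component, because a $\C[G]$-homomorphism $\phi : V \to W$ is determined by $\phi(v_0)$, and $\phi(v_0)$ must lie in the subspace of $W$ on which $g$ acts as $\chi(g)$, which is exactly $e_\chi W$. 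Under this identification, the endomorphism $\phi \mapsto u \circ \phi$ becomes simply the restriction of $u$ to $e_\chi W$ (note $u$ commutes with $e_\chi$ since $u$ is $\C[G]$-linear, so it preserves this subspace). Hence the determinant we want equals $\Det_\C(u|_{e_\chi W})$.

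Next I would connect $\Det_\C(u|_{e_\chi W})$ to $\Det_{\C[G]}(u)$. Decompose $W$ into isotypic components $W = \bigoplus_{\psi \in \hat G} e_\psi W$; since $u$ is $\C[G]$-linear it respects this decomposition, and $\Det_{\C[G]}(u) \in \C[G]^\times \cong \prod_{\psi} \C^\times$ has $\psi$-component equal to $\Det_\C(u|_{e_\psi W})$ — this is precisely how the reduced determinant $\Det_{\C[G]}$ over the semisimple algebra $\C[G]$ is defined, component by component on the Wedderburn factors, which here are all copies of $\C$ because $G$ is abelian. Applying the character $\chi$ to $\Det_{\C[G]}(u)$, viewed as picking out the $\chi$-component via $\chi(e_\psi) = \delta_{\chi,\psi}$ extended linearly, yields exactly $\Det_\C(u|_{e_\chi W})$. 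Combining with the previous paragraph gives the claimed equality $\chi(\Det_{\C[G]}(u))$.

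The only genuinely delicate point is bookkeeping around the identification $\mathrm{ev}_{v_0}$ and making sure the map $\phi \mapsto u \circ \phi$ really does transport to left-multiplication by $u$ on $e_\chi W$ rather than to some twisted version — but this is immediate once one writes $(u\circ\phi)(v_0) = u(\phi(v_0))$. A secondary point worth stating carefully is the precise normalization of $\Det_{\C[G]}$ on a semisimple commutative algebra and the compatibility of "apply $\chi$" with "project to the $\chi$-factor"; since this is the only way the statement can be read, I would simply spell out that for $x = \sum_\psi x_\psi e_\psi \in \C[G]^\times$ one has $\chi(x) = x_\chi$, and that $\Det_{\C[G]}(u)$ is by definition $\sum_\psi \Det_\C(u|_{e_\psi W})\, e_\psi$. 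No deep input is needed; the lemma is essentially a restatement of the definition of $\Det_{\C[G]}$ once the $\Hom$-space is correctly identified, and it will be used in the sequel (presumably applied to $u = \lambda \circ f^{-1} \circ (\text{something})$ or to automorphisms arising from the $R_\chi^f$ construction) to pass between determinants of $\C[G]$-endomorphisms and the character-by-character regulators.
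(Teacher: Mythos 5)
Your proof is correct and fills in exactly the ``straight-forward linear algebra'' that the paper leaves unwritten: the identification $\Hom_{\C[G]}(V,W)\cong e_\chi W$ via evaluation, the observation that $\phi\mapsto u\circ\phi$ transports to $u|_{e_\chi W}$, and the fact that $\chi(\Det_{\C[G]}(u))=\Det_\C(u|_{e_\chi W})$ under the Wedderburn decomposition of $\C[G]$ (valid since $G$ is abelian at this point in the paper). The only organizational difference is that the paper first reduces to free $W$ and you argue directly on general $W$ via the isotypic decomposition, which is a harmless and arguably cleaner variant of the same idea.
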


\begin{proof}
The statement for general $W$ follows immediately from that for free $W$, and in this case the proof is straight-forward linear algebra.
\end{proof}

\begin{prop} \label{ind of f}
For any two choices of $\Q[G]$-module isomorphism
\[ f,g : \oh_{K,S}\st \teno{\Z} \Q \ra X \teno{\Z} \Q ,\]
we have $\I^f \inv(\varphi_G(\mathcal{R}^f)^{-1}) = \I^g \inv(\varphi_G(\mathcal{R}^g)^{-1})$, where $\inv$ is the involution of $\Q[G]$ sending each group element to its inverse.
\end{prop}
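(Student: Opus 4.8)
The plan is to reduce the claim to an identity in $\Q[G]\st$ comparing the ``module part'' $\I^f$ and the ``regulator part'' $\kappa(\varphi_G(\mathcal{R}^f)^{-1})$ under the change of trivialisation $f \mapsto g$. Set $u = g \of f^{-1} \in \Aut_{\Q[G]}(X \teno{\Z} \Q)$, so that $g = u \of f$ and $f = u^{-1} \of g$. First I would show the regulator part transforms by $\Det_{\Q[G]}(u)$: writing $R_\chi^g$ as the determinant of $\phi \mapsto \lambda \of g^{-1} \of \phi = \lambda \of f^{-1} \of u^{-1} \of \phi$ on $\Hom_{\C[G]}(V^*, X \teno{\Z} \C)$, one factors this endomorphism as (post-composition by $u^{-1}$) followed by (the map $\phi \mapsto \lambda \of f^{-1} \of \phi$ defining $R_\chi^f$). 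Taking determinants and applying Lemma \ref{det lemma} to the first factor with $W = X \teno{\Z} \C$ gives $R_\chi^g = \br{\chi}(\Det_{\C[G]}(u^{-1})) R_\chi^f$ — note the contragredient $V^*$ has character $\br\chi$. Since $L_{K/k,S}^*(0,\chi)$ does not involve $f$, this yields $\mathcal{R}^g(\chi) = \br{\chi}(\Det_{\Q[G]}(u)^{-1}) \mathcal{R}^f(\chi)$ for every $\chi$, and hence, applying $\varphi_G$ and using $\varphi_G(h h') = \varphi_G(h)\varphi_G(h')$ together with the identity $\sum_\chi \br\chi(a) e_\chi = \kappa(a)$ for $a \in \Q[G]$, we get $\varphi_G(\mathcal{R}^g) = \kappa(\Det_{\Q[G]}(u))^{-1}\, \varphi_G(\mathcal{R}^f)$, so that $\kappa(\varphi_G(\mathcal{R}^g)^{-1}) = \Det_{\Q[G]}(u)\, \kappa(\varphi_G(\mathcal{R}^f)^{-1})$.

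Next I would show the module part transforms in the opposite way, i.e. $\I^g = \Det_{\Q[G]}(u)^{-1}\, \I^f$ as $\Z[G]$-submodules of $\Q[G]$. By definition $\I^g$ is generated by $\Det_{\Q[G]}(\psi)$ for those $\psi \in \End_{\Q[G]}(X \teno{\Z} \Q)$ with $\psi \of g(\oh_{K,S}\st) \con X$. Given such a $\psi$, set $\phi = \psi \of u$; then $\phi \of f(\oh_{K,S}\st) = \psi \of u \of f(\oh_{K,S}\st) = \psi \of g(\oh_{K,S}\st) \con X$, so $\Det_{\Q[G]}(\phi) = \Det_{\Q[G]}(\psi)\Det_{\Q[G]}(u) \in \I^f$; conversely every generator of $\I^f$ arises this way from a generator of $\I^g$ via $\psi = \phi \of u^{-1}$. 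Since $\Det_{\Q[G]}$ is multiplicative and $\Det_{\Q[G]}(u) \in \Q[G]\st$ (as $u$ is an automorphism), this gives the bijection of generating sets $\{\Det_{\Q[G]}(\phi)\} = \{\Det_{\Q[G]}(\psi)\}\cdot \Det_{\Q[G]}(u)$, whence $\I^f = \I^g \Det_{\Q[G]}(u)$, i.e. $\I^g = \Det_{\Q[G]}(u)^{-1}\I^f$.

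Combining the two transformation laws, the unit factors $\Det_{\Q[G]}(u)^{\pm 1}$ cancel:
\[ \I^g\, \kappa(\varphi_G(\mathcal{R}^g)^{-1}) = \bigl(\Det_{\Q[G]}(u)^{-1}\I^f\bigr)\bigl(\Det_{\Q[G]}(u)\,\kappa(\varphi_G(\mathcal{R}^f)^{-1})\bigr) = \I^f\, \kappa(\varphi_G(\mathcal{R}^f)^{-1}), \]
using that $\Det_{\Q[G]}(u)$ is a central unit of $\Q[G]$ so it may be moved freely past the $\Z[G]$-submodule $\I^f$. This is the desired equality.

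The steps are each short, but the one requiring genuine care — and which I expect to be the main obstacle — is tracking the contragredient and the involution $\kappa$ correctly: one must verify that the character appearing when Lemma \ref{det lemma} is applied is $\br\chi$ rather than $\chi$ (because the $\Hom$ is out of $V^*$), and that this is exactly matched by the fact that $\varphi_G$ packages $h$ via $\sum_\chi h(\chi)e_\chi$ while $\kappa$ swaps $e_\chi \leftrightarrow e_{\br\chi}$, so that the two $\br{(\cdot)}$'s compose to give honest cancellation of $\Det_{\Q[G]}(u)$ and not of some twisted variant. A secondary point to check is that $\Det_{\Q[G]}(u)$ is genuinely a unit in $\Q[G]$ and central, so that the manipulations of one-sided $\Z[G]$-submodules above are legitimate; this follows from multiplicativity of $\Det_{\Q[G]}$ applied to $u u^{-1} = \id$ and the fact that $\Det_{\Q[G]}$ lands in the centre of $\Q[G]$.
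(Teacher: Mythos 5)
Your proof is correct and follows essentially the same route as the paper's: set $u = g\circ f^{-1}$, use Lemma~\ref{det lemma} to compute how the regulator transforms by $\kappa(\Det_{\Q[G]}(u))$, note that $\I^f$ transforms by the inverse factor $\Det_{\Q[G]}(u)^{-1}$, and observe that the two cancel. Your extra care in tracking the contragredient (so that the character supplied to the lemma is $\br\chi$, matching the $\kappa$ coming out of $\varphi_G$) is exactly the point the paper's proof also hinges on.
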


\begin{proof}
We use the following useful shorthand: If $A$ is an abelian group and $R$ a subring of $\C$, write $RA$ for $A \teno{\Z} R$. Also, if $V$ and $W$ are finitely generated $\C[G]$-modules and $h$ an endomorphism of $W$, denote by $h_{V,W}$ the endomorphism of $\Hom_{\C[G]}(V,W)$ given by sending a homomorphism $\psi$ to $h \of \psi$.

Let $f,g$ be choices of isomorphism $\Q \oh_{K,S}\st \ra \Q X$ and $u = g \of f^{-1} \in \Aut_{\Q[G]}(\Q X)$. Then for any representation $V$ of $G$
\[ \rf{f}(V) = \Det_\C(u_{V^*,\C X}) \rf{g}(V) ,\]
so if $\rf{(u)}$ is the assignment $V \mapsto \Det_\C(u_{V^*,\C X})$, we have $\rf{f} = \rf{(u)}\rf{g}$ as elements of $\Hom_{G_\Q}(R(G),(\qbrc)\st)$.

Now, if $V$ is a one-dimensional representation with character $\chi$, $\rf{(u)}(\chi) = \Det_\C(u_{V^*,\C X}) = \br{\chi}(\Det_{\Q[G]}(u))$ by Lemma \ref{det lemma}. Hence $\varphi_G(\rf{(u)}) = \inv(\Det_{\Q[G]}(u))$. On the other hand, it is easy to see that $\I^g = \Det_{\Q[G]}(u)^{-1} \I^f$. Putting this together, we have the result.
\end{proof}

Proposition \ref{ind of f} allows us to make the following definition:

\begin{definition} \label{def j}
Define $\J{K/k,S} = \I^f(\inv(\varphi_G(\mathcal{R}^f)^{-1}))$ for any choice of $\Q[G]$-module isomorphism $f : \oh_{K,S}\st \teno{\Z} \Q \ra X \teno{\Z} \Q$.
\end{definition}

\subsection{$\J{K/K,S}$ and the analytic class number formula}

Let us first of all check that when we neglect any Galois action, \ie when $k = K$, the fractional ideal just defined relates the annihilators of $\rou{K}$ and $\class(K)$ in the expected way. So, take $k = K$ and consider the $\Z$-submodule $\J{K/K,S}$ of $\Q$. Then choosing a $\Q$-linear isomorphism $f : \oh_{K,S}\st \teno{\Z} \Q \ra X \teno{\Z} \Q$ sending a $\Z$-basis for $\oh_{K,S}\st/\tors$ to a $\Z$-basis for $X$, we find that the Stark $f$-regulator is just the usual Dirichlet regulator for the pair $(K,S)$, up to sign. Also, in this situation $\I^f$ is simply $\Z$. Then denoting by $S_\infty$ the set of infinite places of $K$ and letting $R_K$ be the Dirichlet regulator for $K$, $\J{K/K,S_\infty}$ is given by
\begin{equation} \label{K/K}
\J{K/K,S_\infty} = \Z \frac{\zeta_K^*(0)}{R_K}
\end{equation}
where $\zeta_K(s)$ is the Dedekind $\zeta$-function of $K$. The analytic class number formula as stated in (\ref{acnf}) then tells us that
\[ \ann_\Z(\rou{K}) \J{K/K,S_\infty} \con \ann_\Z(\class(K)) .\]

\subsection{The integers $r_{K/k,S}(\chi)$}

Because the orders of vanishing of $L$-functions at the point $0$ will be referred to often, we reserve notation for them.

\begin{definition}
If $\chi$ is a (not-necessarily irreducible) character of $G$, then the order of vanishing of the $L$-function $L_{K/k,S}(s,\chi)$ at $s = 0$ is denoted $r_{K/k,S}(\chi)$, or $r(\chi)$ when the pair $(K/k,S)$ is understood.
\end{definition}

We give a lemma which makes an important link between the Galois module structure of $X$ and the integers $r_{K/k,S}(\chi)$. The lemma is in fact \cite[Ch.I,Prop.3.4]{tate:stark}, and does not assume $G$ is abelian.

\begin{lemma} \label{x and l}
For each $v \in S$, choose $w_v | v$. Then for any representation $V$ of $G$ with character $\chi$,
\[ r(\chi) = \sum_{v \in S} \dim_\C V^{G_{w_v}} - \dim_\C V^G = \inpr{\chi}{\chi_X} = \dim_\C \Hom_{\C[G]}(V^*,X \teno{\Z} \C) ,\]
where $\chi_X$ is the character of $X \teno{\Z} \C$.
\end{lemma}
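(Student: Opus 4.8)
The plan is to establish all four equalities in Lemma \ref{x and l} by moving through them from left to right, treating the middle expressions as the bridge. The character-theoretic identity $\sum_{v \in S} \dim_\C V^{G_{w_v}} - \dim_\C V^G = \inpr{\chi}{\chi_X}$ is pure representation theory and should be handled first, since it is independent of any arithmetic: the permutation module $\C[S_K]$ decomposes as $\bigoplus_{v \in S} \ind_{G_{w_v}}^G \mathbf{1}$, so by Frobenius reciprocity $\inpr{\chi}{\chi_{\C[S_K]}} = \sum_{v \in S} \inpr{\res_{G_{w_v}} \chi}{\mathbf{1}} = \sum_{v \in S} \dim_\C V^{G_{w_v}}$ (using that $V^*$ and $V$ have the same dimension of fixed spaces in each case, since $\dim V^H = \dim (V^*)^H$); subtracting off the augmentation-map contribution, which accounts for the degree-zero condition defining $X$, gives $\inpr{\chi}{\chi_X} = \sum_{v \in S} \dim_\C V^{G_{w_v}} - \dim_\C V^G$. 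The final equality $\inpr{\chi}{\chi_X} = \dim_\C \Hom_{\C[G]}(V^*, X \teno{\Z} \C)$ is just the standard formula $\inpr{\chi}{\psi} = \dim_\C \Hom_{\C[G]}(V^*, W)$ for $W$ with character $\psi$, applied with $\psi = \chi_X$ — note the contragredient appears precisely because $\Hom_{\C[G]}(V^*, W) \iso (V \teno W)^G$ has dimension $\inpr{\chi_V \chi_W}{\mathbf{1}} = \inpr{\chi_W}{\chi_{V^*}^\vee}$, which one checks unwinds to $\inpr{\chi_V}{\chi_W}$.

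That leaves the genuinely arithmetic input, $r(\chi) = \inpr{\chi}{\chi_X}$, which is where I expect the real content to sit. Here I would invoke the Dirichlet regulator isomorphism $\lambda : \oh_{K,S}\st \teno{\Z} \R \iso X \teno{\Z} \R$ of $\R[G]$-modules recorded in Section \ref{stark reg}: it shows $X \teno \C$ and $\oh_{K,S}\st \teno \C$ have the same character, so $\inpr{\chi}{\chi_X}$ equals the multiplicity of $\chi$ in $\oh_{K,S}\st \teno \C$. On the analytic side, the order of vanishing of $L_{K/k,S}(s,\chi)$ at $s=0$ is computed from the Euler factors at the archimedean places together with the removed finite places in $S$; the functional equation relating $s$ and $1-s$ converts this into the rank statement at $s=1$, and the classical formula (Tate, \cite[Ch.I, Prop.3.4]{tate:stark}) for $\ord_{s=0} L_{K/k,S}(s,\chi)$ reads exactly $\sum_{v \in S} \dim_\C V^{G_{w_v}} - \dim_\C V^G$. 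Rather than reprove this, I would cite it directly, since the lemma statement already attributes the result to Tate.

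The main obstacle, then, is not the proof itself — each piece is either citable or routine — but getting the contragredients and the archimedean bookkeeping consistent: one must be careful that the fixed-space dimensions $\dim_\C V^{G_{w_v}}$ are insensitive to replacing $V$ by $V^*$ (true, since $G_{w_v}$ is finite), and that the single subtraction of $\dim_\C V^G$ simultaneously accounts for the degree-zero defining condition on $X$ and matches the rank of the unit group modulo torsion via Dirichlet's unit theorem in its equivariant form. I would therefore structure the write-up as: (1) decompose $\C[S_K]$ and apply Frobenius reciprocity to get the middle equality; (2) pass to $X$ by the augmentation sequence; (3) identify $\dim_\C \Hom_{\C[G]}(V^*, X \teno \C)$ with $\inpr{\chi}{\chi_X}$; (4) cite \cite[Ch.I, Prop.3.4]{tate:stark} for $r(\chi) = \sum_{v} \dim_\C V^{G_{w_v}} - \dim_\C V^G$, noting the regulator isomorphism makes this consistent with the unit-group interpretation. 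Steps (1)–(3) are short; step (4) is a citation.
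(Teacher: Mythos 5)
Your proposal is correct. The paper offers no proof of its own for this lemma --- it simply identifies it with \cite[Ch.I, Prop.3.4]{tate:stark} --- and your write-up supplies the routine character-theoretic equalities (the decomposition $\C[S_K]\iso\bigoplus_{v\in S}\ind_{G_{w_v}}^G\mathbf{1}$, Frobenius reciprocity, the augmentation sequence $0\to X\teno\C\to\C[S_K]\to\C\to 0$, and the $\Hom$--inner-product identity, which uses that $\chi_X$ is real-valued) before citing the same Tate reference for the computation of $r(\chi)$, so the two agree on where the genuine arithmetic content resides.
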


\subsection{$\J{K/k,S}$ and $\stick_{K/k,S}(1)$}

Let $K/k$ be any abelian extension and $S$ any finite set of places of $k$ containing the infinite ones. We show here that $\stick_{K/k,S}(1)$ generates an easily described submodule of $\J{K/k,S}$. Let $e_0 = e_0(K/k,S) \in \C[G]$ be the sum of the idempotents corresponding to the characters $\chi \in \hat{G}$ with $r(\chi) = 0$. In fact, $e_0 \in \Q[G]$: by Lemma \ref{x and l}, it is the $\Q[G]$-determinant of the zero endomorphism of $X \teno{\Z} \Q$.

\begin{prop} \label{stick and j}
$\stick_{K/k,S}(1) \in \J{K/k,S}$. Further,
\[ \Z[G]\stick_{K/k,S}(1) = e_0 \J{K/k,S} .\]
\end{prop}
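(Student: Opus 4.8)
The plan is to compute both sides against the idempotents $e_\chi$ for $\chi \in \hat{G}$, exploiting that $\Q[G] \iso \prod_{\chi} \Q(\chi)$ after extending scalars to $\qbrc$, and that the various objects in play decompose compatibly. First I would fix a $\Q[G]$-module isomorphism $f : \Q\oh_{K,S}\st \ra \Q X$; by Definition \ref{def j} and Proposition \ref{ind of f}, $\J{K/k,S}$ is independent of this choice. The starting point is to understand $e_0 \I^f$. By Lemma \ref{x and l}, $r(\chi) = \dim_\C \Hom_{\C[G]}(V^*, \C X)$, so the characters $\chi$ with $r(\chi) = 0$ are precisely those for which the $\chi$-component of $X$ vanishes; hence $e_0 X \teno{\Z} \Q = 0$ and, for any $\phi \in \End_{\Q[G]}(\Q X)$ with $\phi \of f(\oh_{K,S}\st) \con X$, the element $e_0 \Det_{\Q[G]}(\phi)$ depends only on the behaviour of $\phi$ on the (zero) $e_0$-part. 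One sees that $e_0 \I^f = e_0 \Z[G]$: the inclusion $\con$ is because $\Det$ lands in $\Q[G]$ and $e_0 \Det_{\Q[G]}(\phi)$, being a determinant over the étale algebra $e_0 \Q[G] \iso \prod_{r(\chi)=0}\Q(\chi)$ of an endomorphism of the zero module, equals $e_0$; the reverse inclusion follows by taking $\phi$ an endomorphism that is the identity on the $e_0$-part and scales the rest into $X$.

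Next I would identify $e_0 \inv(\varphi_G(\mathcal{R}^f)^{-1})$ with $e_0 \stick_{K/k,S}(1)$. For $\chi$ with $r(\chi) = 0$, the $L$-function $L_{K/k,S}(s,\chi)$ does not vanish at $s=0$, so its leading coefficient is simply $L_{K/k,S}(0,\chi) = L^*_{K/k,S}(0,\chi)$, and the Stark $f$-regulator $R^f_\chi$ is the determinant of an endomorphism of the zero space $\Hom_{\C[G]}(V^*,\C X)$, hence equals $1$. Therefore $\mathcal{R}^f(\chi) = R^f_\chi / L^*_{K/k,S}(0,\chi) = L_{K/k,S}(0,\chi)^{-1}$ for such $\chi$. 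Applying $\varphi_G$, inverting, and then applying $\inv$ (which sends $e_\chi$ to $e_{\br\chi}$), we get $e_0 \inv(\varphi_G(\mathcal{R}^f)^{-1}) = \sum_{r(\chi)=0} L_{K/k,S}(0,\chi) e_{\br\chi}$, which is exactly $e_0 \stick_{K/k,S}(1)$ by Definition \ref{stick}, since $L_{K/k,S}(1-n,\chi)$ at $n=1$ is $L_{K/k,S}(0,\chi)$ and the $\chi$ with $r(\chi)>0$ contribute $0$ to $\stick_{K/k,S}(1)$ anyway (their $L$-value at $0$ vanishes).

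Combining the two computations, $e_0 \J{K/k,S} = e_0 \I^f \cdot e_0 \inv(\varphi_G(\mathcal{R}^f)^{-1}) = e_0 \Z[G] \cdot e_0 \stick_{K/k,S}(1) = \Z[G] e_0 \stick_{K/k,S}(1) = \Z[G] \stick_{K/k,S}(1)$, the last equality because $\stick_{K/k,S}(1) = e_0 \stick_{K/k,S}(1)$ (its support lies in the $r(\chi)=0$ part). This simultaneously shows $\stick_{K/k,S}(1) \in e_0 \J{K/k,S} \con \J{K/k,S}$ and the asserted equality $\Z[G]\stick_{K/k,S}(1) = e_0 \J{K/k,S}$. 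The main obstacle I anticipate is the careful verification that $e_0 \I^f = e_0 \Z[G]$ rather than something smaller: one must produce, for a given target lattice condition, an endomorphism $\phi$ that is genuinely the identity on the $e_0$-component while still satisfying $\phi\of f(\oh_{K,S}\st) \con X$ on the complementary component, and check that its determinant has $e_0$-part equal to $e_0$; handling the interplay between the integral lattice $X$ and the idempotent decomposition (which is only defined over $\Q$, not $\Z$) is the delicate point. Everything else is a matter of unwinding definitions idempotent-by-idempotent.
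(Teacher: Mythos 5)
Your argument is correct and is essentially the paper's proof, idempotent-decomposed slightly differently: both hinge on $e_0(X \teno{\Z} \Q) = 0$, hence $e_0\Det_{\Q[G]}(\phi) = e_0$ for every admissible $\phi$, together with $R^f_\chi = 1$ and $L^*(0,\chi) = L(0,\chi)$ whenever $r(\chi) = 0$. The obstacle you flag at the end is in fact vacuous — since $e_0$ kills $X\teno{\Z}\Q$, the zero endomorphism already witnesses $e_0 \in \I^f$, and no delicate lattice bookkeeping is needed.
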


\begin{proof}
Choose any $\Q[G]$-isomorphism $f : \oh_{K,S}\st \teno{\Z} \Q \ra X \teno{\Z} \Q$, and let $\Psi = \{\chi \in \hat{G} \sat r(\chi) = 0\}$. $e_0 \in \I^f$ since the zero map certainly satisfies the integrality condition in the definition of $\I^f$. Also, for $\chi \in \Psi$, $R_\chi^f = 1$ and $L_{K/k,S}^*(0,\chi) = L_{K/k,S}(0,\chi) = \br{\chi}(\stick_{K/k,S}(1))$, so that
\[ \inv(\varphi_G(\rf{f}))^{-1} = \sum_{\chi \in \hat{G} \setm \Psi} \rf{f}(\br{\chi})^{-1} e_\chi + \stick_{K/k,S}(1) e_0 .\]
But since $L_{K/k,S}(0,\chi) = 0$ for $\chi \in \hat{G} \setm \Psi$, $\stick_{K/k,S}(1)e_0 = \stick_{K/k,S}(1)$. This proves that $\stick_{K/k,S}(1) \in \J{K/k,S}$. To show that $\Z[G] \stick_{K/k,S}(1)$ is all of $e_0 \J{K/k,S}$, use the fact that for any endomorphism $\alpha$ of $X \teno{\Z} \Q$, $e_0 \Det_{\Q[G]}(\alpha) = e_0$.
\end{proof}

When $K/\Q$ is cyclotomic of prime-power conductor and $S$ consists of the ramified places, $e_0$ in Proposition \ref{stick and j} is the minus idempotent $e_- = \frac{1}{2}(1 - c)$ for complex conjugation. For a general cyclotomic field, this need not be the case as there may exist odd characters $\chi$ for which $L_{K/\Q,S}(0,\chi) = 0$.

\section{$\J{K/k,S}$ in a special case} \label{special case}

In this section, we express $\J{K/k,S}$ in terms of the annihilator of a certain quotient of $\oh_{K,S}\st$, under assumptions on the orders of vanishing of the $L$-functions of the extension. The precise statement is in Theorem \ref{j theorem}. This quotient involves the so-called \emph{Stark elements}, which in general exist only conjecturally, but which are known to exist in some cases (in particular the case we will describe).

\subsection{The conjecture $\abKkS$} \label{abKkS section}

We define two $\Z[G]$-submodules of $\oh_{K,S}\st$ which these Stark elements are going to lie in. Let $e$ denote the number of roots of unity in $K$.

\begin{definition}
$\uab = \{u \in \oh_{K,S}\st \sat K(u^{1/e})/k \textrm{ is abelian}\}$.
\end{definition}

\begin{definition} \label{uv}
If $v \in S$ splits completely in $K/k$, we define $U^{(v)}$ in two cases. Namely:

\emph{(a) $\#S \geq 3$:} $U^{(v)} = \{u \in \oh_{K,S}\st \sat \|u\|_{w'} = 1 \textrm{ for all } w' \dnd v\}$.

\emph{(b) $\#S = 2$:}
\[ U^{(v)} = \{u \in \oh_{K,S}\st \sat \|u\|_{w'} = \|u\|_{w''} \textrm{ for all } w',w'' \in S_K \setm \{w|v\}\} .\]
\end{definition}

For future reference, we give names to three hypotheses on the set $S$ necessary for the formulation of the conjecture concerning the Stark elements. They are (St1), (St2) and (St3) as follows:

\begin{tabular}{cp{9.5cm}l}
(St1) & $S$ contains (the infinite places and) the places which ramify in $K/k$. \\
(St2) & $S$ contains at least one place which splits completely in $K/k$. \\
(St3) & $\#S \geq 2$.
\end{tabular}

\begin{remark}
These conditions imply that $r(\chi) \geq 1$ for all $\chi \in \hat{G}$ -- use Lemma \ref{x and l}.
\end{remark}

So, let $S$ satisfy (St1), (St2) and (St3). For a place $w$ of $K$ in $S_K$ having trivial decomposition group in $G$, the following conjecture for the triple $(K/k,S,w)$ will be referred to as $\abKkSw$. (It can be found in \cite[Ch.IV]{tate:stark}.)

\begin{conj} \label{ab stark}
Let $v$ be the place of $k$ below $w$. Then there is $\eps \in \uab \cap U^{(v)}$ such that
\begin{equation} \label{ab conc 1}
\log \|\eps\|_{\sigma w} = -e \zeta_{K/k,S}'(0,\sigma^{-1}) \textrm{ for all } \sigma \in G ,\
\end{equation}
\ie
\begin{equation} \label{ab conc 2}
L_{K/k,S}'(0,\chi) = - \frac{1}{e} \sum_{\sigma \in G} \br{\chi}(\sigma) \log \|\eps\|_{\sigma w} \textrm{ for all } \chi \in \hat{G} ,
\end{equation}
\ie
\begin{equation} \label{ab conc 3}
\sum_{\chi \in \hat{G}} L_{K/k,S}'(0,\chi) e_{\br{\chi}} = - \frac{1}{e} \sum_{\sigma \in G} \log \|\eps\|_{\sigma w} \sigma .
\end{equation}
\end{conj}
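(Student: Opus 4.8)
The statement is one of Tate's equivalent formulations of the rank-one abelian Stark conjecture, and it splits into two parts of very different character: the \emph{existence} of an element $\eps \in \uab \cap U^{(v)}$ realising (\ref{ab conc 1}), and the \emph{equivalence} of the three displayed identities (the two ``\ie''s). Only the latter is formal, and my plan for it is pure finite Fourier analysis on the abelian group $G$, using nothing more than the expression of the $S$-truncated $L$-functions as character combinations of the partial zeta functions $\zeta_{K/k,S}(s,\sigma)$ together with the orthogonality relations $\frac{1}{|G|}\sum_{\chi \in \hat{G}}\chi(\sigma)\br{\chi}(\tau) = \delta_{\sigma,\tau}$.

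For (\ref{ab conc 1}) $\Leftrightarrow$ (\ref{ab conc 2}): differentiating at $s = 0$ the relation $L_{K/k,S}(s,\chi) = \sum_{\sigma \in G}\chi(\sigma)\,\zeta_{K/k,S}(s,\sigma)$ (with the conventions of \cite{tate:stark}) gives $L_{K/k,S}'(0,\chi) = \sum_{\sigma}\chi(\sigma)\,\zeta_{K/k,S}'(0,\sigma)$; substituting the right-hand side of (\ref{ab conc 1}) and reindexing $\sigma \mapsto \sigma^{-1}$ produces (\ref{ab conc 2}), and the converse is the Fourier inversion of the same relation. For (\ref{ab conc 2}) $\Leftrightarrow$ (\ref{ab conc 3}): expand $e_{\br{\chi}} = \frac{1}{|G|}\sum_{\sigma \in G}\chi(\sigma)\sigma$, so the left-hand side of (\ref{ab conc 3}) becomes $\sum_{\sigma}\bigl(\frac{1}{|G|}\sum_{\chi} L_{K/k,S}'(0,\chi)\chi(\sigma)\bigr)\sigma$; matching the coefficient of each $\sigma$ reduces (\ref{ab conc 3}) to the assertion that $\frac{1}{|G|}\sum_{\chi} L_{K/k,S}'(0,\chi)\chi(\sigma) = -\frac{1}{e}\log\|\eps\|_{\sigma w}$ for every $\sigma$, and plugging (\ref{ab conc 2}) into the left side and using orthogonality collapses the double sum to exactly $-\frac{1}{e}\log\|\eps\|_{\sigma w}$ (the reverse direction being the identical inversion). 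Thus the three conditions are interchangeable and the conjecture reduces to the single statement (\ref{ab conc 1}) for a suitable $\eps$.

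The real obstacle — and the reason this is a conjecture — is that there is no unconditional construction of an $\eps$ lying \emph{inside} $\uab \cap U^{(v)}$ in general. For the extensions relevant to this paper it is known: when $k = \Q$ one may take $\eps$ to be (a suitable power of) a cyclotomic unit of $K$, and when $k$ is imaginary quadratic one uses elliptic units. To conclude in those cases one would (i) verify (\ref{ab conc 1}) by comparing $\log\|\eps\|_{\sigma w}$ with $\zeta_{K/k,S}'(0,\sigma^{-1})$ via the classical closed form for the derivative at $s = 0$ of a partial Dedekind zeta function (the Dirichlet class number / Kronecker limit formula computation) — this is the only non-bookkeeping step and the one where the analytic input genuinely enters; (ii) check $\eps \in U^{(v)}$ from the vanishing of $\|\eps\|_{w'}$ (or, when $\#S = 2$, the equality of these norms) at the places $w' \dnd v$, which is transparent from the explicit factorisation of $\eps$; and (iii) check $\eps \in \uab$ from the fact that adjoining $e$-th roots of a cyclotomic (resp. elliptic) unit keeps the extension of $k$ abelian, as in \cite{tate:stark}.
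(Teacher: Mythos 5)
The item you were asked to prove is Conjecture \ref{ab stark}, i.e.\ the rank-one abelian Stark conjecture $\abKkSw$ in Tate's formulation. The paper does not prove it --- it is reproduced from \cite[Ch.IV]{tate:stark}, and later the paper simply invokes \cite[Ch.IV, Prop.3.10]{tate:stark} for the special case $\#S = 2$ used from Section \ref{assump} onward --- so there is no proof in the paper to compare against, and you were correct to treat this as a conjecture rather than a provable theorem.

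Your treatment of the two asserted equivalences is correct. Starting from $L_{K/k,S}(s,\chi) = \sum_{\sigma \in G}\chi(\sigma)\,\zeta_{K/k,S}(s,\sigma)$, differentiation at $s=0$ together with the reindexing $\sigma \mapsto \sigma^{-1}$ takes (\ref{ab conc 1}) to (\ref{ab conc 2}), and Fourier inversion on $G$ takes it back; and (\ref{ab conc 3}) is simply (\ref{ab conc 2}) packaged into the idempotent decomposition of $\C[G]$: multiplying (\ref{ab conc 2}) by $e_{\br{\chi}}$ and summing over $\chi$ gives (\ref{ab conc 3}), while applying the ring homomorphism $\chi : \C[G] \ra \C$ to both sides of (\ref{ab conc 3}) returns (\ref{ab conc 2}). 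This correctly reduces the conjecture to the single existence assertion (\ref{ab conc 1}), and your observation that existence of $\eps \in \uab \cap U^{(v)}$ is the genuine --- and in general open --- content is also right. In the cases this paper uses, existence is not argued from scratch but taken from Tate and then verified for explicit elements built from cyclotomic units: $\eps = (1-\zeta)(1-\zeta^{-1})$ in Section \ref{kplus eps} and $(1-\zeta)^{e\halfst}$ in Proposition \ref{halfst is stark}, which is the strategy you outline. In short, your proposal is accurate as a description of what is formal and what is conjectural; there is simply no proof in the paper against which to compare it.
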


If an $\eps$ satisfying $\abKkSw$ exists then it is necessarily unique up to a root of unity in $K$. We therefore see that the triple $(K/k,S,w)$ defines a class in $\oh_{K,S}\st/{\rou{K}}$, where $\rou{K}$ denotes the roots of unity in $K$, and in fact any element in this class satisfies the conjecture.

\begin{definition}
Suppose we have $\eps$ satisfying $\abKkSw$. Then the class $\eps \rou{K}$ will be denoted $\epsclass{K/k,S,w}$, and its elements will be called the \emph{Stark elements} attached to $w$.
\end{definition}

 The conjectures $\abKkSw$, as $w$ runs through all places in $S_K$ having trivial decomposition group, are equivalent, and we call them all just $\abKkS$.

\subsubsection{Stark units}

Assume $\abKkS$ holds and let $w$ be a place of $K$ lying above $S$ and having trivial decomposition group in $G$. Then the $\Z[G]$-submodule of $\oh_{K,S}\st$ generated by $\rou{K}$ and a Stark element for $w$ is independent of the choice of $w$, and we denote it $\E = \E_{K/k,S}$. It will be called the group of $\emph{Stark units}$ for the pair $(K/k,S)$.

\subsection{The assumption on $(K/k,S)$} \label{assump}

We here discuss the assumption to be made on the pair $(K/k,S)$ (extra to the hypotheses (St1), (St2) and (St3) which are required for the formulation of $\abKkS$) in order to state and prove Theorem \ref{j theorem}.

\begin{prop} \label{equiv conds}
Assume (St1), (St2) and (St3) hold, with $v$ splitting completely in $K/k$ and $v' \in S \setm \{v\}$, and let $w|v$ and $w'|v'$. Suppose $\abKkS$ is true and let $\eps \in \epsclass{K/k,S,w}$. Then the following are equivalent:

\begin{tabular}{cp{9.5cm}l}
(i) & $\eps$ generates $\oh_{K,S}\st \teno{\Z} \Q$ freely over $\Q[G]$. \\
(ii) & $\oh_{K,S}\st \teno{\Z} \Q \iso \Q[G]$. \\
(iii) & $w' - w$ generates $X$ freely over $\Z[G]$. \\
(iv) & $X \iso \Z[G]$. \\
(v) & $r(\chi) = 1$ for all $\chi \in \hat{G}$.
\end{tabular}
\end{prop}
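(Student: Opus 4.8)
The plan is to prove the chain of equivalences by establishing a cycle, exploiting the fact that $\oh_{K,S}^\times \teno{\Z} \Q$ and $X \teno{\Z} \Q$ are isomorphic $\Q[G]$-modules (via the Dirichlet regulator, which is an $\R[G]$-isomorphism, together with the descent argument from \cite{serre:linear} already invoked in Section \ref{stark reg}). Thus statements (ii) and (iv) are literally equivalent without any further work. The real content is relating these to the existence of a \emph{free generator} of the expected shape, namely $\eps$ on the units side and $w' - w$ on the $X$ side, and to the $L$-function condition (v).

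First I would dispatch the equivalence of (ii), (iv) and (v). By Lemma \ref{x and l}, $r(\chi) = \inpr{\chi}{\chi_X}$ for every character $\chi$, so $\chi_X = \sum_{\chi \in \hat G} r(\chi) \chi$; since $G$ is abelian, $X \teno{\Z} \C \iso \C[G]$ as $\C[G]$-modules if and only if every irreducible character appears in $\chi_X$ with multiplicity exactly $1$, i.e. $r(\chi) = 1$ for all $\chi \in \hat G$. This gives (iv)$\Leftrightarrow$(v) over $\C$, but because $X$ is a $\Z[G]$-lattice and $\Q[G]$ is a product of fields, a $\C[G]$-isomorphism $\Q X \iso \Q[G]$ comes down to an equality of characters, which already lives over $\Q$; hence (iv)$\Leftrightarrow$(v) as stated, and (ii)$\Leftrightarrow$(iv) by the regulator isomorphism.

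Next I would handle (iii)$\Rightarrow$(iv) (trivial, since a free generator exhibits an isomorphism $X \iso \Z[G]$) and (i)$\Rightarrow$(ii) (equally trivial). The substantive implications are (v)$\Rightarrow$(iii) and (v)$\Rightarrow$(i). For (v)$\Rightarrow$(iii): assuming $r(\chi) = 1$ for all $\chi$, Lemma \ref{x and l} applied to each one-dimensional $\chi$ shows that the element $y = w' - w \in X$ (recall $v$ splits completely, so $w' - w$ makes sense and is nonzero in each $\chi$-component) has nonvanishing image $e_\chi y \neq 0$ in $(X \teno{\Z} \Q)_\chi$ for every $\chi$ — this is exactly the computation $\sum_{v \in S}\dim V^{G_{w_v}} - \dim V^G$ unwinding to the contribution of the split place $v$. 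Since each isotypic component of $\Q X$ is one-dimensional and $e_\chi y \neq 0$ there, $y$ generates $\Q X$ freely over $\Q[G]$; the point is then to upgrade this to a generation of the \emph{lattice} $X$ over $\Z[G]$, which follows because $X$, being the degree-zero submodule of the permutation module on $S_K$ with $v$ totally split, is a free $\Z[G]$-module of rank $1$ generated by $w' - w$ — this can be seen directly by writing down the $\Z[G]$-basis, or by a rank count plus the fact that $\Z[G]y \con X$ has finite index dividing the product of the local contributions, all of which are trivial under (v). For (v)$\Rightarrow$(i): transport $y = w' - w$ through the regulator. By the defining property (\ref{ab conc 1}) of the Stark element $\eps$, $\lambda(\eps) = -e \sum_{\sigma} \zeta'_{K/k,S}(0,\sigma^{-1}) \sigma w$, and (v) together with Stark's conjecture (known here since $K/\Q$-abelian or assumed) forces each $\zeta'$-component, equivalently each $L'(0,\chi) = L^*(0,\chi)$, to be nonzero; hence $e_\chi \lambda(\eps) \neq 0$ for all $\chi$, so $\eps$ generates $\oh_{K,S}^\times \teno{\Z} \Q$ freely over $\Q[G]$, which is (i).

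The main obstacle is the passage from ``generates freely over $\Q[G]$'' to ``generates freely over $\Z[G]$'' in statement (iii) — i.e. showing that under (v) the sublattice $\Z[G](w' - w)$ is \emph{all} of $X$, not merely a finite-index subgroup. The cleanest route is to identify $X$ explicitly: if $v$ splits completely then $S_K$ contains the full $G$-orbit $\{\sigma w\}_{\sigma \in G}$, and projecting the augmentation map $\Z[S_K] \to \Z$ shows that the degree-zero part $X$ is generated over $\Z[G]$ by differences $w'' - w$ with $w'' \in S_K$; one checks that under (v) (which forces, via Lemma \ref{x and l}, that every other place $v''$ also has the $G$-orbit structure making $w'' - w \in \Z[G](w' - w)$) a single difference $w' - w$ suffices. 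I would present this as a short lemma on the structure of $X$ rather than a hands-on matrix computation. Everything else is formal manipulation with idempotents and the already-quoted Lemmas \ref{x and l} and \ref{det lemma}.
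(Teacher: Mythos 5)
Your overall route is essentially the paper's: isolate (v) as the hub, dispatch the easy implications via Lemma \ref{x and l}, and then prove (v)$\Rightarrow$(iii) and (v)$\Rightarrow$(i) by hand. Two execution points are worth flagging.

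First, in (v)$\Rightarrow$(i) your displayed formula $\lambda(\eps) = -e\sum_\sigma \zeta'_{K/k,S}(0,\sigma^{-1})\,\sigma w$ omits the $\log\|\eps\|_{w'}\,w'$ term. This is not merely cosmetic: without it the right-hand side is not degree-zero (its degree is $-eL'_{K/k,S}(0,\mathbf{1})\neq 0$ under (v)), hence not in $X\otimes_\Z\R$, and the idempotent argument breaks down precisely at $\chi=\mathbf{1}$, where $e_\mathbf{1} w'\neq 0$. The paper writes $\lambda(\eps) = \sum_\sigma \log\|\eps\|_{\sigma w}\,\sigma w + \log\|\eps\|_{w'}\,w'$, computes $\lambda(\eps^\sigma)$ from it, and concludes by a direct linear-independence argument (if $\sum a_\sigma\eps^\sigma = 0$ then $\sum a_\sigma\chi(\sigma)=0$ for all $\chi$) followed by the rank count $\rk\oh_{K,S}^\times = \rk X = |G|$. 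Your idempotent version is recoverable once you include the $w'$ term: indeed $e_\mathbf{1}\lambda(\eps) = \log\|\eps\|_{w'}\,e_\mathbf{1}(w'-w)$ by the product formula, and the coefficient is $eL'_{K/k,S}(0,\mathbf{1})\neq 0$.

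Second, for (v)$\Rightarrow$(iii) you correctly identify the $\Q[G]$-to-$\Z[G]$ upgrade as the crux, but the ``orbit structure of the other places'' language is vaguer than necessary. The clean resolution is the one you mention in passing as an alternative and which the paper uses: by Lemma \ref{x and l}, (v) forces $\#S = 2$, and the rank count $|G| = \rk X = |G| + \#\{\text{places above }v'\} - 1$ forces $w'$ to be the unique place above $v'$. Then $X$ is visibly the free $\Z[G]$-module on $w'-w$ with no further index argument needed; there is no ``product of local contributions'' to invoke.
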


\begin{proof}
(i) $\imp$ (ii) $\imp$ (v) and (iii) $\imp$ (iv) $\imp$ (v) are immediate from Lemma \ref{x and l}. To finish the proof, we show (v) $\imp$ (i) and (v) $\imp$ (iii). We do these simultaneously.

Assuming (v), we know first of all (from Lemma \ref{x and l}) that $S$ has two elements, \ie $S = \{v,v'\}$. But we also know from Lemma \ref{x and l} that $X \teno{\Z} \C \iso \C[G]$ and so
\begin{eqnarray*}
|G| &=& \rk(X) \\
&=& \#\{\textrm{places above $v$}\} + \#\{\textrm{places above $v'$}\} - 1 \\
&=& |G| + \#\{\textrm{places above $v'$}\} - 1,
\end{eqnarray*}
hence $w'$ is the unique place of $K$ above $v'$. Thus we see already that $w' - w$ necessarily generates $X$ freely over $\Z[G]$, \ie (iii) holds.

To continue, by definition of the regulator map $\lambda$,
\[ \lambda(\eps) = \sum_{\sigma \in G} \log \|\eps\|_{\sigma w} \sigma w + \log \|\eps\|_{w'} w' .\]
Referring back to the statement of $\abKkSw$ and noting that $L_{K/k,S}'(0,\chi) = L_{K/k,S}^*(0,\chi)$ for all $\chi \in \hat{G}$, we see then that
\[ \lambda(\eps) = -e \sum_{\chi \in \hat{G}} L_{K/k,S}^*(0,\chi) e_{\br{\chi}} w + \log \|\eps\|_{w'} w' ,\]
and so for $\sigma \in G$,
\begin{equation} \label{lam sig eps}
\lambda(\eps^\sigma) = -e \sum_{\chi \in \hat{G}} \br{\chi}(\sigma) L_{K/k,S}^*(0,\chi) e_{\br{\chi}} w + \log \|\eps\|_{w'} w' .
\end{equation}
Now, suppose we have $a_\sigma \in \Q$ for each $\sigma \in G$ such that $\sum_{\sigma \in G} \eps^\sigma \ten a_\sigma = 0$ in $\oh_{K,S}\st \teno{\Z} \Q$. Applying $\lambda$ to both sides, we find using (\ref{lam sig eps}) that $\sum_{\sigma \in G} a_\sigma \chi(\sigma) = 0$ for all $\chi \in \hat{G}$, \ie $a_\sigma = 0$ for all $\sigma \in G$. Since $\rk(\oh_{K,S}\st) = \rk(X) = |G|$, (i) holds.
\end{proof}

Now, the assumption we make on $(K/k,S)$ is:

\begin{assumption}
(St1), (St2) and (St3) hold for $S$, and $r(\chi) = 1$ for all $\chi \in \hat{G}$.
\end{assumption}

In particular $r(1) = 1$ so that $\#S = 2$, and so by \cite[Ch.IV, Prop.3.10]{tate:stark}, $\abKkS$ holds automatically. Hence by Proposition \ref{equiv conds}, $\oh_{K,S}\st \teno{\Z} \Q$ and $X \teno{\Z} \Q$ are free, rank $1$ $\Q[G]$-modules and we have natural choices for free generators.

In looking for examples of pairs $(K/k,S)$ satisfying the above assumption, it is perhaps more convenient to use the following form of the assumption: $S$ contains the infinite and ramified places and equals $\{v,v'\}$ where $v$ splits completely and $v'$ is non-split. We have the following examples: (We point out that if $(K/k,S)$ satisfies the assumption, then so does $(E/k,S)$ for any subextension $E/k$.)

\begin{tabular}{cp{9.5cm}l}
(i) & $p$ an odd prime, $k = \Q$, $K = \Q(\zeta_{p^r})^+$, $S = \{\infty,p\}$. \\
(ii) & $p$ an odd prime, $k = \Q$, $K/\Q$ any finite subextension of the cyclotomic $\Zp$-extension of $\Q$, $S = \{\infty,p\}$. \\
(iii) & $p \equiv 3 \Mod 4$, prime, $k = \Q(\sqrt{-p})$, $K = \Q(\zeta_{p^r})$, $S = \{v,\p\}$ where $v$ is the infinite place of $k$ and $\p$ the unique place above $p$. \\
\end{tabular}

We also remark that if we are already given an extension $K/\Q$ such that $(K/\Q,S)$ satisfies the assumption, where $S = \{\infty,p\}$, then $(KF/F,S_F)$ will also satisfy it for any imaginary quadratic field $F$ such that $p$ remains non-split in $KF$. This will happen in particular whenever $p$ is non-split in $F$ and $K/\Q$ has odd degree (as in example (ii) above).

\subsection{Description of $\J{K/k,S}$} \label{desc of j}

We emphasize that we proceed under the assumption (discussed in Section \ref{assump}) that $S$ satisfies (St1), (St2) and (St3) and $r(\chi) = 1$ for all $\chi \in \hat{G}$. As mentioned, $\abKkS$ holds in this case and we choose $v,v',w,w',\eps$ as in Proposition \ref{equiv conds}.

\begin{theorem} \label{j theorem}
Let $\E$ be the group of Stark units attached to $(K/k,S)$, \ie the $\Z[G]$-submodule of $\oh_{K,S}\st$ generated by $\eps$ and the roots of unity in $K$. Then
\[ \J{K/k,S} = \frac{1}{e} \ann_{\Z[G]}(\oh_{K,S}\st/\E) .\]
\end{theorem}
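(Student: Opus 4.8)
The plan is to compute both sides of the claimed identity using the natural free generators provided by Proposition \ref{equiv conds}. Since $r(\chi) = 1$ for all $\chi \in \hat{G}$, we know $\oh_{K,S}\st \teno{\Z} \Q$ is free of rank $1$ over $\Q[G]$, generated by $\eps$, and $X \teno{\Z} \Q$ is free of rank $1$ over $\Q[G]$, generated by $y := w' - w$. Using these generators I would first choose the specific $\Q[G]$-isomorphism $f : \oh_{K,S}\st \teno{\Z} \Q \ra X \teno{\Z} \Q$ sending $\eps \mapsto y$. The point of this choice is that it makes $\I^f$ completely transparent: an endomorphism $\phi$ of $X \teno{\Z} \Q \iso \Q[G]$ is just multiplication by some $\alpha \in \Q[G]$, its determinant is $\alpha$ itself, and the integrality condition $\phi(f(\oh_{K,S}\st)) \con X$ becomes $\alpha \cdot f(\oh_{K,S}\st) \con X$. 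Since $\Z[G] y \con X$ with $X/\Z[G]y$ describing exactly how $\oh_{K,S}\st$ sits relative to its $\Q[G]$-span, I would identify $\I^f$ with $\{\alpha \in \Q[G] \sat \alpha f(\oh_{K,S}\st) \con X\}$, and then relate this to $\ann_{\Z[G]}(\oh_{K,S}\st/\E)$ via the dictionary $f(\E) = \Z[G]y$ (because $\E = \Z[G]\eps + \rou{K}$ and $f$ kills torsion), so that $\I^f = \{\alpha \sat \alpha f(\oh_{K,S}\st) \con f(\E)\} = \ann_{\Z[G]}(\oh_{K,S}\st/\E)$ under the isomorphism $\oh_{K,S}\st \teno{\Z}\Q \iso X \teno{\Z}\Q$ induced by $f$; care is needed here because $\oh_{K,S}\st$ has torsion, but $\oh_{K,S}\st/\E$ is a quotient of the torsion-free part $\oh_{K,S}\st/\rou{K}$, so the annihilator is unaffected.

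Next I would compute $\varphi_G(\mathcal{R}^f)$ for this choice of $f$. This is where the conjecture $\abKkSw$ enters: equation (\ref{ab conc 3}), together with $L_{K/k,S}'(0,\chi) = L_{K/k,S}^*(0,\chi)$ (valid because every $L$-function has order of vanishing exactly $1$), expresses $\sum_\chi L_{K/k,S}^*(0,\chi) e_{\br\chi}$ in terms of the regulator pairing of $\eps$ against $w$. Concretely, the computation in the proof of Proposition \ref{equiv conds}, namely (\ref{lam sig eps}), already shows $\lambda(\eps) = -e \sum_\chi L_{K/k,S}^*(0,\chi) e_{\br\chi} w + \log\|\eps\|_{w'} w'$. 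Using the relation $\log\|\eps\|_{w'} = -\sum_{w'' \neq w'} \log\|\eps\|_{w''}$ in $X$ (degree zero) together with the fact that $w'$ is the unique place above $v'$ and $U^{(v)}$ forces $\|\eps\|_{w''}=1$ for $w'' \dnd v$ in case (a), or the analogous balancing in case (b), I would rewrite $\lambda(\eps)$ as $-e\left(\sum_\chi L_{K/k,S}^*(0,\chi) e_{\br\chi}\right)(w - w')\cdot(\text{possibly }\tfrac12)$, i.e. essentially $\lambda(\eps) = e\,\inv(\stick_{K/k,S}'(\cdot))\cdot y$ up to the involution. Tracking the definition of $R_\chi^f$ (the determinant of $\phi \mapsto \lambda \of f^{-1} \of \phi$ on $\Hom_{\C[G]}(V^*, X\teno{\Z}\C)$) through Lemma \ref{det lemma}, and using $f(\eps) = y$, I get $R_\chi^f = \br\chi$ applied to the $\Q[G]$-element representing $\lambda \of f^{-1}$, hence $\varphi_G(\mathcal{R}^f) = \varphi_G(R^f)/\varphi_G(L^*)$ works out to $\inv$ of a unit times $e^{-1}$, so that $\inv(\varphi_G(\mathcal{R}^f)^{-1}) = \frac{1}{e}\cdot(\text{the element }\mu\in\Q[G]\st \text{ with }\lambda(\eps) = \mu y)^{-1}\cdot(\dots)$ — the upshot being that $\inv(\varphi_G(\mathcal{R}^f)^{-1})$ is $\frac{1}{e}$ times a unit of $\Q[G]$ that precisely records the discrepancy between the generator $f(\eps) = y$ of $f(\E)$ and the image under $\lambda \of f^{-1}$.

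Finally I would assemble: $\J{K/k,S} = \I^f \cdot \inv(\varphi_G(\mathcal{R}^f)^{-1})$. The unit factor I extracted in the second step, when multiplied into $\I^f = \ann_{\Z[G]}(\oh_{K,S}\st/\E)$, should act as the identity on that ideal — this is the crux of why the ``choice of $f$'' ambiguity washes out (it is Proposition \ref{ind of f} at work), and it should follow from the observation that the unit is the image of $\lambda \of f^{-1}$ which, since $\lambda$ is an isomorphism $\oh_{K,S}\st\teno{\Z}\R \ra X\teno{\Z}\R$ and $f$ was chosen to send the generator $\eps$ to the generator $y$, is itself an automorphism of $X\teno{\Z}\Q$ sending $f(\E) = \Z[G]y$ onto $\lambda(\E)\teno{\Z}\Q$'s lattice — and the annihilator of $\oh_{K,S}\st$ modulo a rank-one sublattice is unchanged under $\Q[G]$-automorphisms once we account for that automorphism's determinant, which is exactly the unit in question. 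I expect the main obstacle to be the bookkeeping in the second step: pinning down precisely which unit of $\Q[G]$ appears in $\varphi_G(\mathcal{R}^f)$, keeping the involution $\inv$ on the correct side throughout (the definitions mix $e_\chi$ and $e_{\br\chi}$), and handling the factor of $e$ and any factor of $\frac12$ arising from the $\#S = 2$ degenerate case of $U^{(v)}$ in Definition \ref{uv}(b). Once the unit is correctly identified as the determinant of $\lambda \of f^{-1}$, the cancellation against $\I^f$ is forced and the $\frac{1}{e}$ is exactly what remains.
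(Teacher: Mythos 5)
Your overall strategy matches the paper exactly: choose the isomorphism $f$ sending $\eps \mapsto w' - w$, compute $\I^f$ explicitly, compute $\varphi_G(\mathcal{R}^f)$ explicitly, and multiply. The identification $\I^f = \ann_{\Z[G]}(\oh_{K,S}\st/\E)$ in your first step is correct and is Lemma \ref{if lemma} of the paper; your two-line argument via $f(\E) = \Z[G](w'-w)$ and the torsion-freeness observation is sound.

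The gap is in the second step. The paper's Lemma \ref{rf lemma} proves the sharp statement $\varphi_G(\mathcal{R}^f) = e$, a \emph{constant}, and your sketch never establishes this. You obtain $\lambda(\eps) = -e\sum_\chi L^*(0,\chi)e_{\br\chi}\,w + \log\|\eps\|_{w'}\,w'$, but then try to convert $w \mapsto w - w'$ by an appeal to degree-zero relations and $U^{(v)}$ constraints, ending up hedging with a mystery factor of $\frac12$ and leaving the answer as ``$\frac{1}{e}$ times a unit.'' This is where the argument actually fails: in the $\#S = 2$ case that you are necessarily in (since $r(1) = 1$ forces $\#S = 2$), Definition \ref{uv}(b) is vacuous, so $U^{(v)}$ gives you nothing. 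The correct move is to split off the multiple of $w'$, observe that $e_\chi w' = 0$ for $\chi \neq 1$ because $w'$ is $G$-fixed, and kill the $\chi = 1$ term by reading off $\log\|\eps\|_{w'} = e\,L^*(0,1)$ directly from (\ref{ab conc 2}) applied to the trivial character. That gives $R^f_\chi = e\,L^*(0,\chi)$ for every $\chi$, hence $\mathcal{R}^f(\chi) = e$, with no residual unit and no factor of $\frac12$.

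Because you never pin down that residual unit as $1$, your third step substitutes hand-waving for the missing computation, and the hand-waving is off the mark. You claim the unit is ``the determinant of $\lambda \of f^{-1}$'' and will ``act as the identity'' on $\I^f$ by Proposition \ref{ind of f}. Neither is right: $\Det(\lambda\of f^{-1})$ is the $\Q[G]$-representative of $R^f$, not of $\mathcal{R}^f = R^f/L^*$, and Proposition \ref{ind of f} is a statement about changing $f$, not about a specific unit stabilising a specific ideal. The content of Lemma \ref{rf lemma} is precisely the cancellation $R^f_\chi / L^*(0,\chi) = e$, which is where the $\abKkS$ conjecture does its work; this cannot be outsourced to a naturality argument. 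Fill in that computation and the proof is complete.
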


To prove this, we choose a particular $\Q[G]$-module isomorphism
\[ f : \oh_{K,S}\st \teno{\Z} \Q \ra X \teno{\Z} \Q ,\]
namely the one which sends $\eps$ to $w' - w$ (which exists and is unique by our assumption and Proposition \ref{equiv conds}), and look at $\I^f$ and $\inv(\varphi_G(\mathcal{R}^f))$. This will be done in the following two lemmas.

\begin{lemma} \label{rf lemma}
With $f$ as above, $\varphi_G(\mathcal{R}^f) = e$, the number of roots of unity in $K$.
\end{lemma}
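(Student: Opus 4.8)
I want to compute $\varphi_G(\mathcal{R}^f)$ directly from its definition, using the specific $f$ sending $\eps \mapsto w' - w$. Recall $\mathcal{R}^f \in \Hom_{G_\Q}(R(G),(\qbrc)\st)$ is determined by its values $\mathcal{R}^f(\chi) = R_\chi^f / L_{K/k,S}^*(0,\chi)$ on characters, and $\varphi_G(\mathcal{R}^f) = \sum_{\chi \in \hat G} \mathcal{R}^f(\chi) e_\chi$. So it suffices to show that for every $\chi \in \hat G$ we have $R_\chi^f = e \cdot L_{K/k,S}^*(0,\chi)$, and then $\varphi_G(\mathcal{R}^f) = \sum_\chi e \cdot e_\chi = e$.

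First I would fix $\chi \in \hat G$ with underlying one-dimensional representation $V$, so $V^* = \br{\chi}$. By Lemma \ref{x and l} (using our assumption $r(\chi) = 1$), $\Hom_{\C[G]}(V^*, X \teno{\Z} \C)$ is one-dimensional, so $R_\chi^f$ is literally the scalar by which the map $\phi \mapsto \lambda \of f^{-1} \of \phi$ acts on this line. Pick a convenient nonzero $\phi$: since $w' - w$ freely generates $X$ over $\Z[G]$, the natural choice is the homomorphism determined by sending a generator of $V^*$ to the $\br{\chi}$-component $e_{\br\chi}(w' - w)$ (this is nonzero exactly because $r(\chi) = 1$). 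Then $f^{-1} \of \phi$ sends the generator to $e_{\br\chi}\eps$ (by our choice of $f$), and $\lambda \of f^{-1} \of \phi$ sends it to $e_{\br\chi}\lambda(\eps)$. Now I invoke the formula for $\lambda(\eps)$ derived inside the proof of Proposition \ref{equiv conds}, namely
\[ \lambda(\eps) = -e \sum_{\psi \in \hat G} L_{K/k,S}^*(0,\psi) e_{\br\psi} w + \log\|\eps\|_{w'} w' ,\]
so that $e_{\br\chi}\lambda(\eps) = -e\, L_{K/k,S}^*(0,\chi) e_{\br\chi} w + (\text{term supported at } w')$. Comparing with $e_{\br\chi}(w'-w)$, whose $w$-coefficient is $-e_{\br\chi}$ and whose $w'$-coefficient is $e_{\br\chi}$, the scalar relating $e_{\br\chi}\lambda(\eps)$ to $e_{\br\chi}(w'-w)$ on this one-dimensional space must be $e\, L_{K/k,S}^*(0,\chi)$; hence $R_\chi^f = e\, L_{K/k,S}^*(0,\chi)$, and we are done.

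**The main obstacle.** The delicate point is justifying that a single scalar relates $e_{\br\chi}\lambda(\eps)$ and $e_{\br\chi}(w'-w)$, i.e. that the $w'$-components are automatically consistent once the $w$-components are — but this is forced by one-dimensionality of $e_{\br\chi}(X \teno{\Z} \C)$, which holds precisely because $r(\chi) = 1$ (Lemma \ref{x and l}) and because $w'$ is the unique place above $v'$ (established in the proof of Proposition \ref{equiv conds}), so $e_{\br\chi}(X \teno{\Z}\C)$ is spanned by $e_{\br\chi}(w'-w)$ alone. A secondary bookkeeping point is matching the convention $V^*$ versus $V$ and confirming that $L_{K/k,S}'(0,\chi) = L_{K/k,S}^*(0,\chi)$ in this rank-one situation (as already noted in the proof of Proposition \ref{equiv conds}), so that no stray sign or leading-coefficient discrepancy creeps in; but these are routine once the one-dimensional reduction is in place.
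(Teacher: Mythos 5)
Your proposal is correct and follows essentially the same route as the paper's own proof: both diagonalize $\lambda \circ f^{-1}$ over the idempotents, plug in the explicit formula for $\lambda(\eps)$ from the proof of Proposition \ref{equiv conds}, and read off $R_\chi^f = e\, L_{K/k,S}^*(0,\chi)$ component by component. The only cosmetic difference is how the $w'$-term is dispatched when $\chi = 1$: the paper cites the relation $\log\|\eps\|_{w'} = e\, L_{K/k,S}^*(0,1)$ coming from equation \eqref{ab conc 2}, whereas you argue it is forced by one-dimensionality of $e_1(X \teno{\Z} \C)$; both are valid and lead to the same conclusion.
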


\begin{proof}
From the explicit description of $\lambda(\eps)$ given in the proof of Proposition \ref{equiv conds}, we obtain
\begin{eqnarray*}
\lambda \of f^{-1}(w' - w) &=& \lambda(\eps) \\
&=& -e \sum_{\psi \in \hat{G}} L_{K/k,S}^*(0,\psi) e_{\br{\psi}} w + \log \|\eps\|_{w'} w' \\
&=& e \sum_{\psi \in \hat{G}} L_{K/k,S}^*(0,\psi) e_{\br{\psi}} (w' - w) \\
& & - e \sum_{\psi \in \hat{G}} L_{K/k,S}^*(0,\psi) e_{\br{\psi}} w' + \log \|\eps\|_{w'} w' .
\end{eqnarray*}

Now observe that if $\chi \in \hat{G}$ and $x \in X \teno{\Z} \C$, then $\lambda \of f^{-1}(e_\chi x) = R_{\br{\chi}}^f e_\chi x$. In particular,
\begin{eqnarray*}
R_{\br{\chi}}^f e_\chi (w' - w) &=& \lambda \of f^{-1}(e_\chi(w' - w)) \\
&=& e_\chi\left(e \sum_{\psi \in \hat{G}} L_{K/k,S}^*(0,\psi) e_{\br{\psi}} (w' - w)\right) \\
& & - e_\chi \left(e \sum_{\psi \in \hat{G}} L_{K/k,S}^*(0,\psi) e_{\br{\psi}} w' + \log \|\eps\|_{w'} w'\right) \\
&=& e L_{K/k,S}^*(0,\br{\chi}) e_\chi (w' - w) - (e L_{K/k,S}^*(0,\br{\chi}) - \log \|\eps\|_{w'}) e_\chi w' .
\end{eqnarray*}

However, $e_\chi w' = 0$ for $\chi \in \hat{G} \setm \{1\}$, and from (\ref{ab conc 2}) in $\abKkSw$, $\log \|\eps\|_{w'} = e L_{K/k,S}^*(0,1)$. Therefore $R_\chi^f = e L_{K/k,S}^*(0,\chi)$ for all $\chi \in \hat{G}$.
\end{proof}

\begin{lemma} \label{if lemma}
With $f$ as above, $\I^f = \ann_{\Z[G]}(\oh_{K,S}\st/\E)$.
\end{lemma}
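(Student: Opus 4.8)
The goal is to identify $\I^f$, which by definition is the $\Z[G]$-submodule of $\Q[G]$ generated by the determinants $\Det_{\Q[G]}(\phi)$ for $\phi \in \End_{\Q[G]}(X \teno{\Z} \Q)$ satisfying $\phi \of f(\oh_{K,S}\st) \con X$, with the annihilator ideal $\ann_{\Z[G]}(\oh_{K,S}\st/\E)$. The key simplification is that, under our running assumption, $X \iso \Z[G]$ is free of rank $1$ with canonical generator $w' - w$ (Proposition \ref{equiv conds}), so $\End_{\Q[G]}(X \teno{\Z} \Q) \iso \Q[G]$ as a ring, a $\Q[G]$-endomorphism $\phi$ being multiplication by $\Det_{\Q[G]}(\phi) \in \Q[G]$ acting on the generator $w' - w$. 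Thus $\I^f$ is literally the set of $\{\alpha \in \Q[G] \sat \alpha \cdot f(\oh_{K,S}\st) \con X\}$, \ie the set of $\alpha$ such that $\alpha \cdot f(\oh_{K,S}\st) \con \Z[G] \cdot (w'-w)$.

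\medskip

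The plan is then to transport this condition back through $f$ to the unit side. Since $f$ is the $\Q[G]$-isomorphism sending $\eps$ to $w'-w$, and $\E = \Z[G]\eps + \rou{K}$ while $\oh_{K,S}\st \teno{\Z}\Q = \Q[G]\eps$, I would argue as follows. Write $\oh_{K,S}\st/\tors \iso f(\oh_{K,S}\st)$ inside $X\teno{\Z}\Q = \Q[G](w'-w)$; call the image lattice $M$. Then $f(\E) = \Z[G](w'-w)$, i.e. $f$ carries $\E$ (modulo torsion) onto the standard lattice $X$. So $\I^f = \{\alpha \in \Q[G] \sat \alpha M \con X\}$ while $\oh_{K,S}\st/\E \iso M/X$ as $\Z[G]$-modules (the torsion $\rou{K}$ being killed on both sides, since $\rou{K} \con \E$). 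Hence $\alpha M \con X$ if and only if $\alpha$ kills $M/X \iso \oh_{K,S}\st/\E$, which is exactly the statement $\I^f = \ann_{\Z[G]}(\oh_{K,S}\st/\E)$. One small point to nail down carefully: that $f$ really does send $\E$ onto $X$ and not merely into it — this is where uniqueness of $f$ from Proposition \ref{equiv conds}(i) and the fact that $w'-w$ generates $X$ over $\Z[G]$ (Proposition \ref{equiv conds}(iii)) are used, together with $\E/\rou{K}$ being generated by $\eps\rou{K}$ as a $\Z[G]$-module.

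\medskip

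The main obstacle I anticipate is keeping the torsion bookkeeping honest: $\oh_{K,S}\st$ has torsion $\rou{K}$, whereas $X$ is torsion-free, so $f$ is only defined on the quotient $\oh_{K,S}\st\teno{\Z}\Q$ and one must be careful that $\oh_{K,S}\st/\E$ is genuinely isomorphic as a $\Z[G]$-module to the quotient of lattices $M/X$ — this works precisely because $\rou{K} \con \E$, so that $\oh_{K,S}\st/\E \iso (\oh_{K,S}\st/\tors)/(\E/\tors)$ and the latter embeds $f$-equivariantly into $(X\teno{\Z}\Q)/X$ via the induced map. Apart from that, the verification is the routine linear algebra of rank-one modules over $\Q[G]$: the identification $\End_{\Q[G]}(\Q[G]) = \Q[G]$ and the observation that under it $\Det_{\Q[G]}$ is the identity map, so the generating set in the definition of $\I^f$ is exactly the integrality ideal $\{\alpha : \alpha M \con X\}$. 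Combined with Lemma \ref{rf lemma}, which gives $\varphi_G(\mathcal{R}^f) = e$, this lemma immediately yields Theorem \ref{j theorem} via Definition \ref{def j}, since $\inv(e) = e$ is central.
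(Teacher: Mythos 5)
Your proof is correct and follows essentially the same route as the paper's: reduce $\I^f$ to $\{\alpha \in \Q[G] : \alpha f(\oh_{K,S}\st) \con X\}$ using rank-one freeness of $X$, observe that $f$ carries the image $\E'$ of $\E$ isomorphically onto $X$ (the paper works with $U' = $ image of $\oh_{K,S}\st$ and $\E'$ directly rather than transporting to $M = f(U')$, but this is only a cosmetic difference), and then identify the resulting integrality ideal with $\ann_{\Z[G]}(\oh_{K,S}\st/\E)$, using that $\rou{K} \con \E$ for the torsion bookkeeping and that free generation by $\eps \ten 1$ forces such $\alpha$ into $\Z[G]$. The paper spells out that last integrality step ($\alpha(\eps\ten 1) = \beta(\eps\ten 1)$ with $\beta \in \Z[G]$ forces $\alpha = \beta$) a bit more explicitly than you do, but your argument covers it.
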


\begin{proof}
Since $X \teno{\Z} \Q$ is free on one generator, $\I^f$ takes the simpler form
\[ \I^f = \{\alpha \in \Q[G] \sat \alpha f(\oh_{K,S}\st) \con X \} .\]
Denote by $U'$ and $\E'$ the images of $\oh_{K,S}\st$ and $\E$ resp. in $\oh_{K,S}\st \teno{\Z} \Q$, and observe that $f$ maps $\E'$ isomorphically onto $X$.

Suppose $\alpha \in \I^f$ and take $u \in \oh_{K,S}\st$. $f(\alpha(u \ten 1)) = \alpha f(u \ten 1) \in X$, and hence $\alpha (u \ten 1) \in \E'$. Therefore $\alpha U' \con \E'$. Conversely, if $\alpha U' \con \E'$ then given $u \in \oh_{K,S}\st$ $\alpha f(u \ten 1) = f(\alpha(u \ten 1)) \in X$. We have so far shown, therefore, that
\[ \I^f = \{\alpha \in \Q[G] \sat \alpha U' \con \E' \} .\]

Now, if $\alpha U' \con \E'$ then in particular $\alpha(\eps \ten 1) \in \E'$, and so $\alpha(\eps \ten 1) = \beta(\eps \ten 1)$ for some $\beta \in \Z[G]$. But then as $\eps \ten 1$ generates $\oh_{K,S}\st \teno{\Z} \Q$ freely over $\Q[G]$, we must have $\alpha = \beta \in \Z[G]$. Hence
\[ \I^f = \ann_{\Z[G]}(U'/{\E'}) = \ann_{\Z[G]}(\oh_{K,S}\st/\E) .\]
\end{proof}

Combining Lemmas \ref{rf lemma} and \ref{if lemma}, we have proved Theorem \ref{j theorem}.

\section{Examples of $\J{K/k,S}$} \label{examples of j}

We describe $\J{K/k,S}$ in three related cases, and compare them with each other. We fix for the whole section the following notation: $p$ is an odd prime, $n$ a positive integer, $\zeta$ a primitive $p^n$th root of unity in $\br{\Q}$ and $K = \Q(\zeta)$. $K^+$ will denote the maximal totally real subfield of $K$, and we have the Galois groups $G = \gal{K/\Q}$ and $G^+ = \gal{K^+/\Q}$. $S$ will be the set $\{\infty,p\}$ of places of $\Q$.

\subsection{$\Q(\zeta_{p^n})^+/\Q$} \label{kplus eps}

The following example is worked out in \cite[Ch.III, Section 5]{tate:stark}. If $w$ is the infinite place of $K^+$ arising from the embedding $\zeta + \zeta^{-1} \mapsto \exp(2\pi i/{p^n}) + \exp(-2\pi i/{p^n})$, then $\epsclass{K^+/\Q,S,w} = \{\pm (1 - \zeta)(1 - \zeta^{-1})\}$. Hence the group $\E^+$ of Stark units in $K$ is generated over $\Z[G^+]$ by $-1$ and $\eps = (1 - \zeta)(1 - \zeta^{-1})$. Because this is an important example, we state Theorem \ref{j theorem} in this special case:

\begin{prop} \label{real ppc j}
$\J{K^+/\Q,S} = \frac{1}{2} \ann_{\Z[G^+]}(\oh_{K^+,S}\st/{\E^+})$.
\end{prop}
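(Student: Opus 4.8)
The plan is to apply Theorem \ref{j theorem} directly to the pair $(K^+/\Q, S)$ with $S = \{\infty, p\}$, once we have checked that this pair satisfies the running Assumption of Section \ref{special case}. So the first step is to verify that $(K^+/\Q, S)$ satisfies (St1), (St2), (St3) and that $r(\chi) = 1$ for all $\chi \in \hat{G^+}$. Here (St1) holds because $\infty$ is the infinite place of $\Q$ and $p$ is the only prime ramifying in $K^+/\Q$; (St3) holds since $\#S = 2$; and (St2) holds because $\infty$ splits completely in the totally real extension $K^+/\Q$. For the order-of-vanishing condition, one notes that $K^+$ is totally real so every character $\chi \in \hat{G^+}$ is even, and the corresponding Artin $L$-function $L_{K^+/\Q,S}(s,\chi)$ has order of vanishing at $s = 0$ equal to $\sum_{v \in S} \dim V^{G_{w_v}} - \dim V^G$ by Lemma \ref{x and l}; since $\infty$ splits completely the decomposition group at the chosen place above $\infty$ is trivial, contributing $\dim V = 1$, while the place above $p$ and the term $\dim V^G$ contribute nothing for $\chi \neq 1$ and cancel for $\chi = 1$. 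This is exactly example (i) in the list following the Assumption, so strictly speaking we may simply cite that.

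The second step is to identify the data $w$, $\eps$ and $e$ appearing in Theorem \ref{j theorem} in this case. The number $e$ of roots of unity in $K^+$ is $2$ (a totally real field contains only $\pm 1$), which accounts for the factor $\frac{1}{2}$. For the Stark element, we invoke the computation of \cite[Ch.III, Section 5]{tate:stark} recalled just above the proposition: taking $w$ to be the infinite place of $K^+$ coming from $\zeta + \zeta^{-1} \mapsto \exp(2\pi i/p^n) + \exp(-2\pi i/p^n)$, the class $\epsclass{K^+/\Q,S,w}$ is $\{\pm(1-\zeta)(1-\zeta^{-1})\}$, so we may take $\eps = (1-\zeta)(1-\zeta^{-1})$. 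The group $\E$ of Stark units for $(K^+/\Q,S)$ is then, by definition, the $\Z[G^+]$-submodule of $\oh_{K^+,S}\st$ generated by $\eps$ and $\mu(K^+) = \{\pm 1\}$; this is precisely the group denoted $\E^+$ in the statement.

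The final step is then a direct substitution: Theorem \ref{j theorem} gives
\[ \J{K^+/\Q,S} = \frac{1}{e}\ann_{\Z[G^+]}(\oh_{K^+,S}\st/\E) = \frac{1}{2}\ann_{\Z[G^+]}(\oh_{K^+,S}\st/\E^+), \]
which is the assertion of Proposition \ref{real ppc j}.

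There is no serious obstacle here; the proposition is a specialization of Theorem \ref{j theorem} and the only things to pin down are the numerics ($e = 2$) and the explicit Stark element, both of which are standard and already recalled in the surrounding text. The one point requiring a little care is making sure the place $v'$ above $p$ is genuinely non-split (equivalently, that $r(1) = 1$ forces $\#S = 2$ with exactly one of the two places split), so that the hypotheses of Proposition \ref{equiv conds} and hence of Theorem \ref{j theorem} are met; but $p$ is totally ramified in $\Q(\zeta_{p^n})^+/\Q$, so it is certainly non-split, and this is immediate.
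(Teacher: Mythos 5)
Your proof is correct and takes essentially the same route as the paper: the paper states Proposition \ref{real ppc j} explicitly as Theorem \ref{j theorem} specialized to $(K^+/\Q, S)$, with $e = 2$ and the Stark element $(1-\zeta)(1-\zeta^{-1})$ recalled from \cite[Ch.III, Section 5]{tate:stark}, which is exactly what you do, just with more of the hypothesis-checking spelled out.
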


We interpret Proposition \ref{real ppc j} in terms of the \emph{cyclotomic units} $\cyc$ of $K^+$. \cite[Lemma 8.1]{wash:cyc} gives the following set of generators for $\cyc$:
\[ \{-1\} \cup \left\{\xi_a = \zeta^{(1-a)/2}\frac{1 - \zeta^a}{1 - \zeta^{\phantom{a}}} \sat 1 < a < \frac{1}{2} p^n , p \dnd a \right\} .\]
The equation
\begin{equation} \label{connect cyclo and stark}
\xi_a^2 = \frac{(1 - \zeta^a)(1 - \zeta^{-a})}{(1 - \zeta)(1 - \zeta^{-1})}
\end{equation}
shows that the cyclotomic units in $K^+$ are closely related to the Stark units.

\begin{definition} \label{up and ep}
Let $U(p) = \oh_{K^+,S}\st \teno{\Z} \Zp$ and $\E^+(p) = \E^+ \teno{\Z} \Zp$.
\end{definition}

\begin{prop} \label{cyclo j int}
\begin{eqnarray*}
\J{K^+/\Q,S} \cap \Z[G^+] &=& \ann_{\Z[G^+]}(\oh_{K^+}\st/\cyc) \\
\textrm{and } \J{K^+/\Q,S} &\con& \ann_{\Zp[G^+]}(U(p)/{\E^+(p)}) .
\end{eqnarray*}
\end{prop}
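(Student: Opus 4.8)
The plan is to combine Proposition~\ref{real ppc j} with the relation~(\ref{connect cyclo and stark}) between cyclotomic and Stark units, localizing at $p$ where convenient. Write $G = G^+$ throughout. The two displayed statements will be handled slightly differently: the first is an exact (integral) identity about units in the full ring of integers $\oh_{K^+}\st$, while the second is an inclusion after tensoring with $\Zp$.

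For the second statement, I would first observe that $\E^+(p) = \E^+ \teno{\Z}\Zp$ contains the image of $\cyc \teno{\Z}\Zp$ with finite index prime to $p$: indeed~(\ref{connect cyclo and stark}) shows $\xi_a^2 = \eps^{\sigma_a - 1}$ (for the appropriate $\sigma_a \in G$ sending $\zeta \mapsto \zeta^a$), so modulo roots of unity $2\cyc \con \gen{\eps}_{\Z[G]} \con \E^+$, and since $2$ is a unit in $\Zp$ (as $p$ is odd) the $\Zp[G]$-submodules of $U(p)$ generated by $\cyc \teno{\Z}\Zp$, by $\eps$, and by $\E^+(p)$ all coincide. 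Hence $U(p)/\E^+(p) \iso (\oh_{K^+,S}\st/\cyc)\teno{\Z}\Zp$ as $\Zp[G]$-modules, up to the difference between $\oh_{K^+,S}\st$ and $\oh_{K^+}\st$; but $S = \{\infty,p\}$ and $p$ is totally ramified in $K^+/\Q$, so there is a single place of $K^+$ above $p$ and $\oh_{K^+,S}\st/\oh_{K^+}\st$ is infinite cyclic generated by that prime, contributing only to the part of the module on which $G$ acts through the augmentation --- in any case it is $\Z$-free, so it disappears from the torsion quotient after one checks $\oh_{K^+,S}\st \teno{\Z}\Zp$ and $\oh_{K^+}\st \teno{\Z}\Zp$ have the same $\Zp[G]$-annihilator relative to $\cyc\teno{\Z}\Zp$. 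Then by Proposition~\ref{real ppc j}, $\J{K^+/\Q,S} = \tfrac12 \ann_{\Z[G]}(\oh_{K^+,S}\st/\E^+)$, and since $\tfrac12 \in \Zp[G]$ and $\ann_{\Zp[G]}(M\teno{\Z}\Zp) = \ann_{\Z[G]}(M)\teno{\Z}\Zp$ for finitely generated $M$, we get $\J{K^+/\Q,S} \con \tfrac12\,\ann_{\Zp[G]}(U(p)/\E^+(p)) = \ann_{\Zp[G]}(U(p)/\E^+(p))$, the last equality because $\tfrac12$ is a $\Zp[G]$-unit.

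For the first statement, I would argue that an element $\alpha \in \Q[G]$ lies in $\J{K^+/\Q,S}\cap\Z[G]$ iff $2\alpha \in \ann_{\Z[G]}(\oh_{K^+,S}\st/\E^+)$ and $\alpha \in \Z[G]$, and show this is equivalent to $\alpha \in \ann_{\Z[G]}(\oh_{K^+}\st/\cyc)$. For the direction $\supseteq$: if $\alpha \in \Z[G]$ kills $\oh_{K^+}\st/\cyc$, then for $u \in \oh_{K^+,S}\st$ we have $u^{2}$ (or rather, working prime-by-prime) --- more carefully, $\alpha$ sends $\oh_{K^+}\st$ into $\cyc$, and $2\cyc \con \E^+$ by~(\ref{connect cyclo and stark}), so $2\alpha$ sends $\oh_{K^+}\st$ into $\E^+$; the extra $S$-unit direction (the prime above $p$) is handled because $2\alpha$ applied to it is a power of that prime times something in $\E^+$, and one checks $\E^+$ already contains the needed power using that $\eps = (1-\zeta)(1-\zeta^{-1})$ is, up to units, supported at $p$. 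For $\subseteq$: reverse the argument, using that $\cyc$ and $\E^+$ (mod roots of unity, away from the $S$-direction) generate the same $\Z[\tfrac12][G]$-module, so that the $2$ exactly accounts for the discrepancy over $\Z$. This step is where one must be most careful about the difference between $\oh_{K^+}\st$ and $\oh_{K^+,S}\st$ and about the factor $2$ not being invertible over $\Z$; I expect that to be the main obstacle, and the cleanest route is probably to reduce the integral statement to its $\prm$-parts for all primes $\prm$, treating $\prm = 2$ (where $\cyc$ and $\E^+$ may genuinely differ) and $\prm$ odd (where they agree up to the harmless $S$-direction) separately, and invoking~(\ref{connect cyclo and stark}) to pin down the $2$-part. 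The remaining identifications --- that $U(p)/\E^+(p)$ is literally the $p$-part of units mod cyclotomic units, and that localization commutes with taking annihilators of finitely generated modules --- are routine.
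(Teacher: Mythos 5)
The plan---Proposition~\ref{real ppc j} for the $p$-adic containment and relation~(\ref{connect cyclo and stark}) for the integral equality---is the same one the paper's terse proof points to, but your write-up contains a factual error in the second part and a genuine gap in the first. The error: the image of $\cyc \teno{\Z} \Zp$ in $U(p)$ does \emph{not} have finite index in $\E^+(p)$, and the $\Zp[G^+]$-submodule of $U(p)$ it generates does \emph{not} coincide with $\E^+(p)$. By Proposition~\ref{equiv conds}, $\E^+(p)$ is $\Zp[G^+]$-free of rank one on $\eps \ten 1$, while~(\ref{connect cyclo and stark}) gives $\xi_{a_0}\ten 1 = \tfrac{1}{2}(\sigma_{a_0}-1)(\eps\ten 1)$, so the image of $\cyc\teno{\Z}\Zp$ is exactly $I_{G^+}\cdot\E^+(p)$ ($I_{G^+}$ the augmentation ideal), and $\E^+(p)$ modulo that image is $\Zp$-free of rank one; the extra rank is precisely because $\eps$ is a non-unit $S$-unit whereas $\cyc\con\oh_{K^+}\st$. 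Fortunately none of that is needed: as your final sentence correctly records, the containment is immediate from Proposition~\ref{real ppc j} once one notes $\ann_{\Z[G^+]}(\oh_{K^+,S}\st/\E^+)\con\ann_{\Zp[G^+]}(U(p)/\E^+(p))$ and $\tfrac{1}{2}\in\Zp[G^+]\st$, so the detour through $\cyc$ should simply be deleted.

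The gap is the ``$\con$'' direction of the first identity, which you leave as a hope. The missing ingredient is the exact statement $\E^+\cap\oh_{K^+}\st=\{\pm1\}\cdot\cyc^2$. This holds because $\eps$ generates the unique prime of $\oh_{K^+}$ above $p$, so $\eps^\beta\in\oh_{K^+}\st$ iff $\beta\in I_{G^+}$; because $G^+$ is cyclic, $I_{G^+}=(\sigma_{a_0}-1)\Z[G^+]$ for a generator $\sigma_{a_0}$; and because $\eps^{\sigma_{a_0}-1}=\xi_{a_0}^2$ with $\xi_{a_0}$ a $\Z[G^+]$-generator of $\cyc/\{\pm1\}$. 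Feeding this into the valuation sequence $1\ra\oh_{K^+}\st\ra\oh_{K^+,S}\st\ra\Z\ra 1$ gives $\oh_{K^+,S}\st/\E^+\iso\oh_{K^+}\st/\{\pm1\}\cyc^2$, and since $\oh_{K^+}\st/\{\pm1\}$ is $\Z$-torsion-free, $2\alpha$ annihilates $\oh_{K^+}\st/\{\pm1\}\cyc^2$ iff $\alpha$ annihilates $\oh_{K^+}\st/\cyc$. That yields both inclusions at once and removes the need for the prime-by-prime case analysis you propose.
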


\begin{proof}
The second part comes directly from Proposition \ref{real ppc j}, and the first part uses (\ref{connect cyclo and stark}).
\end{proof}

\subsection{$\Q(\zeta_{p^n})/\Q$} \label{full cyclo}

Observe that the orders of vanishing of the $L$-functions $L_{K/\Q,S}(s,\psi)$ are given by
\[ r(\psi) = \left\{
\begin{array}{ll}
1 & \textrm{if $\psi \in \hat{G}$ is even} \\
0 & \textrm{if $\psi \in \hat{G}$ is odd.}
\end{array} \right. \]
This shows that $X \teno{\Z} \Q \cong e_+\Q[G]$ where $e_+ = \frac{1}{2}(1 + c)$ is the $+$-idempotent for complex conjugation $c \in G$. We will also use the notation $e_- = \frac{1}{2}(1 - c)$.

Let $w$ be the place of $K$ arising from the embedding $\zeta \mapsto \exp(2\pi i/{p^n})$, and $w'$ the unique place of $K$ above $p$. Then there is a $\Q[G]$-module homomorphism $f : \oh_{K,S}\st \teno{\Z} \Q \ra X \teno{\Z} \Q$ such that
\begin{equation} \label{def can f}
f(1 - \zeta) = w' - w ,
\end{equation}
and it is necessarily unique. Furthermore, $f$ is an isomorphism.

\begin{lemma} \label{full rf lemma}
With $f$ as in (\ref{def can f}),
\[ \inv(\varphi_G(\rf{f}))^{-1} = \frac{1}{2} e_+ + \stick_{K/\Q,S}(1) \left(= \frac{1}{2} e_+ + \stick_{K/\Q,S}(1)e_- \right) .\]
\end{lemma}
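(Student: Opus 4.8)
The plan is to compute $\varphi_G(\mathcal{R}^f)$ directly, separating according to whether a character is even or odd, and then apply the involution $\inv$. For even $\chi \in \hat{G}$ we are in the situation of Lemma \ref{rf lemma}: the place $w'$ above $p$ is the unique place above $p$, the relevant module is the rank-one piece on which $\eps = 1-\zeta$ acts, and the same computation that gave $R_\chi^f = e L_{K/\Q,S}^*(0,\chi)$ there goes through here (with $e = p^n$, the number of roots of unity in $K$, or one should check whether the relevant constant in this non-totally-real setting is $e$ or $2p^n$; I would verify this against $\abst{K/\Q,S,w}$ as used in Lemma \ref{rf lemma}). For odd $\chi$ we have $r(\chi) = 0$, so by Lemma \ref{x and l} the space $\Hom_{\C[G]}(V^*, X \otimes \C)$ is zero, the regulator endomorphism is the determinant of an empty map, hence $R_\chi^f = 1$, and $L_{K/\Q,S}^*(0,\chi) = L_{K/\Q,S}(0,\chi) = \br{\chi}(\stick_{K/\Q,S}(1))$.

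So the first step is to assemble, using $\varphi_G(h) = \sum_\chi h(\chi) e_\chi$ and $\mathcal{R}^f(\chi) = R_\chi^f / L_{K/\Q,S}^*(0,\chi)$, the identity
\[ \varphi_G(\mathcal{R}^f) = \sum_{\chi \text{ even}} \frac{R_\chi^f}{L_{K/\Q,S}^*(0,\chi)} e_\chi + \sum_{\chi \text{ odd}} \frac{1}{\br{\chi}(\stick_{K/\Q,S}(1))} e_\chi , \]
which by the even-character computation collapses on the even part to a scalar: $\sum_{\chi \text{ even}} c\, e_\chi = c\, e_+$ for the appropriate constant $c$ (which I expect to work out to $\tfrac12$ once the correct regulator constant is pinned down, since Lemma \ref{full rf lemma} asserts the even part of the answer is $\tfrac12 e_+$). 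The second step is to invert and apply $\inv$: since $\inv$ permutes the primitive idempotents by $e_\chi \mapsto e_{\br\chi}$ and commutes with inversion of units, $\inv(\varphi_G(\mathcal{R}^f))^{-1} = \tfrac12 e_+ + \sum_{\chi \text{ odd}} \br{\chi}(\stick_{K/\Q,S}(1)) e_{\br\chi}$, and re-indexing the odd sum over $\br\chi$ (odd iff $\chi$ odd) gives exactly $\sum_{\chi \text{ odd}} L_{K/\Q,S}(0,\chi) e_{\br\chi} = \stick_{K/\Q,S}(1)$, using that $L_{K/\Q,S}(0,\chi) = 0$ for even $\chi$ so that $\stick_{K/\Q,S}(1) = \stick_{K/\Q,S}(1) e_-$. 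This yields the displayed formula, including the parenthetical rewriting.

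The main obstacle I anticipate is bookkeeping rather than conceptual: getting the constant on the even part exactly right. One must confirm that the analogue of Lemma \ref{rf lemma}'s argument — in particular the use of $\log \|\eps\|_{w'} = e L_{K/\Q,S}^*(0,1)$ coming from \eqref{ab conc 2}, and the vanishing $e_\chi w' = 0$ for nontrivial even $\chi$ — produces the factor $e$ and then how $\tfrac1e$ interacts with the eventual $\tfrac12$; here $K = \Q(\zeta_{p^n})$ has $e = 2p^n$ roots of unity, whereas $K^+$ had $e = 2$, so the clean ``$\tfrac12 e_+$'' must arise from cancellation with $L$-values, and I would track this carefully. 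A secondary point is to make sure $\abst{K/\Q,S}$ genuinely applies: $\#S = 2$ and $K/\Q$ abelian, so \cite[Ch.IV, Prop.3.10]{tate:stark} gives it, exactly as in Section \ref{assump}; but note that here $X \otimes \Q \cong e_+\Q[G]$ is \emph{not} all of $\Q[G]$, so $\I^f$ is not simply described as in Lemma \ref{if lemma} and one works only on the $e_+$-part — which is why the lemma only computes $\inv(\varphi_G(\mathcal{R}^f))^{-1}$ and leaves $\I^f$ to a subsequent step.
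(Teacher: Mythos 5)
Your skeleton is right (split characters by parity, compute $R_\chi^f$ and $L^*_{K/\Q,S}(0,\chi)$, invert, apply $\inv$, and use $L_{K/\Q,S}(0,\chi)=0$ for $\chi$ even to make $\stick_{K/\Q,S}(1)e_- = \stick_{K/\Q,S}(1)$), and your odd-character computation is correct. But the one thing the lemma actually requires you to establish — that $R_\chi^f = 2\,L_{K/\Q,S}^*(0,\chi)$ for every even $\chi$, so that the even part of $\varphi_G(\mathcal{R}^f)$ is $2e_+$ and hence the even part of $\inv(\varphi_G(\mathcal{R}^f))^{-1}$ is $\tfrac12 e_+$ — you explicitly leave open, and the route you suggest for pinning it down does not work. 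You propose to read the constant off from $\abst{K/\Q,S,w}$, via $\log\|\eps\|_{w'} = e\,L^*(0,1)$ with $e$ the number of roots of unity in $K$. But $\abst{K/\Q,S}$ is not even formulated for $K = \Q(\zeta_{p^n})$ with $S=\{\infty,p\}$: hypothesis (St2) fails, since neither place of $S$ splits completely in $K/\Q$ (the decomposition group of $\infty$ is $\langle c\rangle$ and $p$ is totally ramified). So \cite[Ch.IV, Prop.3.10]{tate:stark} is not available, and the ``$e$'' in any such formula cannot be $|\mu(K)| = 2p^n$ (you also miscount this as $p^n$).

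The constant $2$ comes from $K^+$, not from $K$. On $\oh_{K,S}\st\otimes\Q$ one has $(1+c)\cdot(1-\zeta) = (1-\zeta)(1-\zeta^{-1})$, so for even $\chi$, $e_\chi\bigl((1-\zeta)(1-\zeta^{-1})\bigr) = 2\,e_\chi(1-\zeta)$, and $(1-\zeta)(1-\zeta^{-1})$ is exactly the Stark element for $K^+/\Q$, which does satisfy $\abst{K^+/\Q,S,w_+}$ with $e_{K^+}=2$. Running the computation of Lemma \ref{rf lemma} for $K^+/\Q$ (equivalently, directly expanding $\lambda(1-\zeta)$, using that each $\sigma w$ is a complex place so $\log\|1-\zeta\|_{\sigma w} = \log\|(1-\zeta)(1-\zeta^{-1})\|_{\sigma w_+}$, that $e_{\br\chi}w'=0$ for even $\chi\ne 1$, and that the product formula forces $\log\|1-\zeta\|_{w'} = 2\,\zeta_{\Q,S}^*(0)$) gives $R_\chi^f = 2\,L_{K/\Q,S}^*(0,\chi)$ for every even $\chi$. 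That is the missing step; the rest of your write-up then goes through.
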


\begin{proof}
This is little more than a combination of the techniques found in Proposition \ref{stick and j} and Lemma \ref{rf lemma}.
\end{proof}

The following lemma gives a nicer form for $\I^f$, with $f$ as above.

\begin{lemma} \label{full if lemma}
With $f$ as in (\ref{def can f}), $\I^f$ is the $\Z[G]$-submodule of $\Q[G]$ generated by $\{ \alpha e_+ + e_- \sat \alpha \in \ann_{\Z[G]}(\oh_{K,S}\st/{\E}) \}$, where $\E$ is the $\Z[G]$-submodule of $\oh_{K,S}\st$ generated by $1 - \zeta$.
\end{lemma}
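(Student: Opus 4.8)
The plan is to exploit the decomposition of $\Q[G]$ into its $e_+$- and $e_-$-parts and analyse the integrality condition defining $\I^f$ separately on each. First I would recall that $\End_{\Q[G]}(X \teno{\Z} \Q)$ is not quite as simple here as in Section \ref{special case}, since $X \teno{\Z} \Q \iso e_+\Q[G]$ is free of rank one over $e_+\Q[G]$ but is zero on the $e_-$-part; an endomorphism $\phi$ is thus determined by a single element $e_+\mu \in e_+\Q[G]$ (multiplication by $\mu$), and $\Det_{\Q[G]}(\phi)$ is $e_+\mu + e_-$ (the $e_-$-component being the empty-determinant $1$, exactly as in the paragraph following Proposition \ref{stick and j} where $e_0\Det_{\Q[G]}(\alpha) = e_0$). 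So $\I^f$ is generated by those $e_+\mu + e_-$ for which multiplication by $\mu$ carries $f(\oh_{K,S}\st)$ into $X$.

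Next I would translate the condition ``$\mu \cdot e_+ f(\oh_{K,S}\st) \con X$'' into a statement about units. Writing $U'$, $\E'$ for the images of $\oh_{K,S}\st$, $\E$ in $\oh_{K,S}\st \teno{\Z} \Q$ exactly as in Lemma \ref{if lemma}, the key point is that $f$ restricts to an isomorphism of the $e_+$-part of $\E'$ onto $X$ (since $1 - \zeta \mapsto w' - w$ and $e_+(w' - w)$ generates $X$ freely over $e_+\Z[G]$, while $e_-$ acts as zero on $X$). Hence $e_+\mu$ sends $e_+ f(U')$ into $X$ precisely when $e_+\mu \cdot e_+ U' \con e_+ \E'$, i.e. when $e_+\mu \in e_+ \ann_{\Z[G]}(\oh_{K,S}\st/\E)$ — the same manoeuvre (apply $f$, note $e_- U'$ is already trivial because $\oh_{K,S}\st \teno{\Z}\Q$ is supported on $e_+$ by the displayed computation of $r(\psi)$, and lift back using that $1 - \zeta$ generates freely over $e_+\Q[G]$) as in the proof of Lemma \ref{if lemma}. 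Therefore $\I^f = \{ e_+\mu + e_- \sat \mu \in \Q[G],\ e_+\mu \in e_+\ann_{\Z[G]}(\oh_{K,S}\st/\E) \}$, and writing $\alpha \in \ann_{\Z[G]}(\oh_{K,S}\st/\E)$ with $e_+\mu = e_+\alpha$ gives exactly the asserted generating set $\{\alpha e_+ + e_- \sat \alpha \in \ann_{\Z[G]}(\oh_{K,S}\st/\E)\}$.

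The one genuine subtlety — and the step I would treat most carefully — is the passage from ``$e_+\mu + e_-$ lies in $\I^f$'' to ``$e_+\mu$ is the $e_+$-part of an \emph{integral} $\alpha$ annihilating $\oh_{K,S}\st/\E$'', together with the converse that every such generator actually satisfies the integrality condition defining $\I^f$. For the forward direction one must check that $e_+\mu \cdot e_+(1-\zeta \ten 1) \in e_+\E'$ forces $e_+\mu \in e_+\Z[G] \cdot$ (something integral), using that $1-\zeta\ten 1$ freely generates over $e_+\Q[G]$ and that $\E' = \Z[G](1-\zeta\ten 1)$, so indeed $e_+\mu = e_+\beta$ for some $\beta \in \Z[G]$ with $\beta(1-\zeta) \in \E$; lifting $\beta$ to an honest annihilator element $\alpha$ of $\oh_{K,S}\st/\E$ (one may need to adjust $\beta$ on the $e_-$-part, which is harmless since $e_-$ kills $X$) requires knowing that $\ann_{\Z[G]}(\oh_{K,S}\st/\E)$ surjects onto the relevant $e_+\Z[G]$-ideal, which follows because $e_- \in \Z_{(\prm)}[G]$-considerations are not needed — one simply observes $e_+ \ann_{\Z[G]}(\oh_{K,S}\st/\E) = \{e_+\beta \sat \beta(1-\zeta)\in\E\}$ directly from $\oh_{K,S}\st\teno{\Z}\Q$ being $e_+$-supported. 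For the converse, given $\alpha \in \ann_{\Z[G]}(\oh_{K,S}\st/\E)$ one checks that multiplication by $e_+\alpha$ defines an endomorphism $\phi$ of $X\teno{\Z}\Q$ with $\phi\of f(\oh_{K,S}\st)\con X$ and $\Det_{\Q[G]}(\phi) = \alpha e_+ + e_-$, placing the generator in $\I^f$. Once these bookkeeping points about the $e_+$/$e_-$ splitting are nailed down, the lemma is immediate.
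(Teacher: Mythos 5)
Your overall plan — computing $\Det_{\Q[G]}$ on the $e_+$- and $e_-$-parts separately so that an endomorphism ``multiplication by $\mu\in e_+\Q[G]$'' has determinant $\mu e_+ + e_-$, and translating the condition $\phi\circ f(\oh_{K,S}\st)\con X$ via $f^{-1}(X)=\E'$ into $\mu U'\con\E'$ — is sound and does give the lemma once carried through. The problem is precisely in the step you flag as the ``one genuine subtlety'': the identity you assert there, $e_+\ann_{\Z[G]}(\oh_{K,S}\st/\E) = \{e_+\beta\sat\beta(1-\zeta)\in\E\}$, is false. For example $\beta=1$ satisfies $1\cdot(1-\zeta)\in\E$, so the right side contains $e_+$; but $e_+=e_+\alpha$ with $\alpha\in\Z[G]$ forces $\alpha=1+\eta$ with $\eta\in\Z[G]$ and $c\eta=-\eta$, and for any such $\eta$ the element $u^\eta$ is a root of unity for every $u\in\oh_{K,S}\st$ (at archimedean places $\overline{\iota(u^\eta)}=\iota(u^{c\eta})=\iota(u^\eta)^{-1}$, hence absolute value $1$, and at the unique place above $p$ the valuation vanishes because the augmentation of $\eta$ is $0$); so $\alpha u=u\cdot u^\eta\in\E$ for all $u$ would force $\oh_{K,S}\st=\E$, which fails in general. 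Hence $e_+\notin e_+\ann_{\Z[G]}(\oh_{K,S}\st/\E)$ generically, and the asserted equality cannot hold.

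The slip comes from replacing the integrality condition actually defining $\I^f$ — which requires $e_+\mu\,(u\ten 1)\in\E'$ for \emph{all} $u\in\oh_{K,S}\st$ — by the weaker condition $e_+\mu\,(1-\zeta\ten 1)\in\E'$ on a single element; the weaker condition alone does not produce an honest annihilator, and no ``lifting'' or adjustment on the $e_-$-part can close that gap. But you do not need to weaken. Exactly as in the proof of Lemma \ref{if lemma}: from the condition on $1-\zeta\ten 1$ you get $e_+\mu=e_+\beta$ for some $\beta\in\Z[G]$, and then the \emph{full} hypothesis $e_+\mu\,U'\con\E'$ gives $\beta U'=e_+\beta\,U'=e_+\mu\,U'\con\E'$ (using $e_+U'=U'$), so $\beta\in\ann_{\Z[G]}(\oh_{K,S}\st/\E)$ directly; passing between $\oh_{K,S}\st/\E$ and $U'/\E'$ is harmless because $\E\supseteq\rou{K}$. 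With that correction your second paragraph is a complete proof; the elaboration in your final paragraph should be discarded.
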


\begin{prop} \label{full j desc}
$\J{K/\Q,S}$ is the $\Z[G]$-submodule of $\Q[G]$ generated by
\[ \left\{ \frac{1}{2} \alpha e_+ + \stick_{K/\Q,S}(1) \sat \alpha \in \ann_{\Z[G]}(\oh_{K,S}\st/{\E}) \right\} .\]
\end{prop}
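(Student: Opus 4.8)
The plan is to combine the two preceding lemmas exactly the way Theorem~\ref{j theorem} was assembled from Lemmas~\ref{rf lemma} and~\ref{if lemma}. By Definition~\ref{def j}, $\J{K/\Q,S} = \I^f(\inv(\varphi_G(\mathcal{R}^f)^{-1}))$ for the specific choice of $f$ in~(\ref{def can f}), and both factors have just been computed: Lemma~\ref{full rf lemma} gives $\inv(\varphi_G(\rf{f}))^{-1} = \tfrac12 e_+ + \stick_{K/\Q,S}(1)e_-$, and Lemma~\ref{full if lemma} describes $\I^f$ as the $\Z[G]$-module generated by the elements $\alpha e_+ + e_-$ with $\alpha \in \ann_{\Z[G]}(\oh_{K,S}\st/\E)$. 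So the proof is the single computation of the product of these two submodules.

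First I would recall the idempotent bookkeeping: $e_+ e_- = 0$, $e_+^2 = e_+$, $e_-^2 = e_-$, and $e_+ + e_- = 1$, and note that $\stick_{K/\Q,S}(1) = \stick_{K/\Q,S}(1)e_-$ since all even $L$-values vanish at $0$ (this was already observed after Proposition~\ref{stick and j}, with $e_0 = e_-$ here). Then for a generator $\alpha e_+ + e_-$ of $\I^f$ one multiplies:
\[
(\alpha e_+ + e_-)\left(\tfrac12 e_+ + \stick_{K/\Q,S}(1)e_-\right) = \tfrac12 \alpha e_+ + \stick_{K/\Q,S}(1) e_- = \tfrac12 \alpha e_+ + \stick_{K/\Q,S}(1),
\]
using orthogonality of $e_+$ and $e_-$ to kill the cross terms. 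As $\alpha$ ranges over $\ann_{\Z[G]}(\oh_{K,S}\st/\E)$ and we take the $\Z[G]$-span, this is precisely the module in the statement, so $\J{K/\Q,S}$ contains it. For the reverse inclusion one uses that these products $\beta_\alpha := \alpha e_+ + e_-$ generate $\I^f$ as a $\Z[G]$-module, hence an arbitrary element of $\J{K/\Q,S}$ is a $\Z[G]$-combination of the $\beta_\alpha \cdot (\tfrac12 e_+ + \stick_{K/\Q,S}(1)e_-)$ computed above, which lands in the claimed module. This gives equality.

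The one point requiring a little care — the ``main obstacle'', such as it is — is making sure the generating set interacts correctly with the $\Z[G]$-module structure: $\I^f$ is the $\Z[G]$-module generated by $\{\alpha e_+ + e_- : \alpha \in \ann_{\Z[G]}(\oh_{K,S}\st/\E)\}$, not the $\Z[G]$-module of all such elements (the $\alpha$ already form an ideal, but the expression $\alpha e_+ + e_-$ is not additive in $\alpha$). So one should check that multiplying a $\Z[G]$-combination $\sum_i \gamma_i \beta_{\alpha_i}$ by $\tfrac12 e_+ + \stick_{K/\Q,S}(1)e_-$ yields $\sum_i \gamma_i(\tfrac12\alpha_i e_+ + \stick_{K/\Q,S}(1))$, which lies in the target module because $\ann_{\Z[G]}(\oh_{K,S}\st/\E)$ is closed under multiplication by $\Z[G]$ and under addition; and conversely that each $\tfrac12 \alpha e_+ + \stick_{K/\Q,S}(1)$ arises as such a product. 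Both directions are immediate once the idempotent arithmetic above is in hand, so the proof is genuinely short — essentially ``combine Lemmas~\ref{full rf lemma} and~\ref{full if lemma},'' mirroring the passage from Lemmas~\ref{rf lemma} and~\ref{if lemma} to Theorem~\ref{j theorem}.
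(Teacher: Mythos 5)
Your proof is correct and matches the paper's own (one-line) proof, which simply says to combine Lemmas~\ref{full rf lemma} and~\ref{full if lemma}; you have spelled out the idempotent computation and the $\Z[G]$-linearity point that the paper leaves implicit.
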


\begin{proof}
Use Lemmas \ref{full rf lemma} and \ref{full if lemma}.
\end{proof}

\subsubsection{Comparison of $\J{K/\Q,S}$ and $\J{K^+/\Q,S}$.} \label{sec:quotient naturality}

Using the descriptions of $\J{K/\Q,S}$ and $\J{K^+/\Q,S}$ that we found in Propositions \ref{real ppc j} and \ref{full j desc} resp., we are able to give an example of the naturality of the fractional ideal under passing to quotients.

\begin{prop} \label{quotient naturality}
With notation as above, $\J{K^+/\Q,S}$ is the image of $\J{K/\Q,S}$ under the natural map $\Q[G] \ra \Q[G^+]$.
\end{prop}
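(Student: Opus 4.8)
The plan is to compare the two explicit descriptions, using Proposition \ref{full j desc} for $\J{K/\Q,S}$ and Proposition \ref{real ppc j} for $\J{K^+/\Q,S}$, and to chase everything through the natural surjection $\pi : \Q[G] \ra \Q[G^+]$. First I would record the behaviour of the relevant idempotents: $\pi(e_+) = 1$ and $\pi(e_-) = 0$, since complex conjugation $c \in G$ maps to the identity of $G^+$. Next I would need to understand the image of $\stick_{K/\Q,S}(1)$ under $\pi$; since $\pi$ sends the $L$-value idempotent decomposition for $K/\Q$ onto that for $K^+/\Q$ (the even characters of $G$ being precisely those factoring through $G^+$), one gets $\pi(\stick_{K/\Q,S}(1)) = \stick_{K^+/\Q,S}(1)$. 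But under our running assumption $r(\chi) = 1$ for all even $\chi$, every even $L$-function vanishes at $0$, so $\stick_{K^+/\Q,S}(1) = 0$ and hence $\pi(\stick_{K/\Q,S}(1)) = 0$. Therefore the generic generator $\tfrac12 \alpha e_+ + \stick_{K/\Q,S}(1)$ of $\J{K/\Q,S}$ maps under $\pi$ to $\tfrac12 \pi(\alpha)$.

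It then remains to identify, as $\alpha$ ranges over $\ann_{\Z[G]}(\oh_{K,S}\st/\E)$, the set of images $\pi(\alpha) \in \Z[G^+]$ with $\ann_{\Z[G^+]}(\oh_{K^+,S}\st/\E^+)$. The key input is the relation between the Stark units in the two layers: by Section \ref{kplus eps} the Stark element for $K^+/\Q$ is $(1-\zeta)(1-\zeta^{-1}) = (1-\zeta)^{1+c} = \absnm_{K/K^+}(1-\zeta)$, i.e. the norm of the Stark element $1-\zeta$ for $K/\Q$. So I would show that norm down to $K^+$ carries the inclusion $\E \hookrightarrow \oh_{K,S}\st$ to $\E^+ \hookrightarrow \oh_{K^+,S}\st$ compatibly with the $\Z[G]$-action (via $\pi$), giving a surjection of $\Z[G]$-modules $\oh_{K,S}\st/\E \twoheadrightarrow \oh_{K^+,S}\st/\E^+$ on which $G$ acts through $G^+$. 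Applying $\ann$, this already gives the inclusion $\pi(\ann_{\Z[G]}(\oh_{K,S}\st/\E)) \subseteq \ann_{\Z[G^+]}(\oh_{K^+,S}\st/\E^+)$, hence $\pi(\J{K/\Q,S}) \subseteq \J{K^+/\Q,S}$.

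For the reverse inclusion one needs that every annihilator $\beta \in \ann_{\Z[G^+]}(\oh_{K^+,S}\st/\E^+)$ lifts to an annihilator of $\oh_{K,S}\st/\E$. Here I would exploit that $\oh_{K,S}\st \ten \Q$ is free of rank one over $\Q[G]$ on $1-\zeta$ (Proposition \ref{equiv conds}, since $r(\chi)=1$ for all even $\chi$ and $=0$ for odd $\chi$ forces, after the computation in that proof's style, the relevant freeness on the $+$-part; the $-$-part of $\oh_{K,S}\st$ is finite, being roots of unity, so contributes only to torsion): concretely, $\alpha \in \ann_{\Z[G]}(\oh_{K,S}\st/\E)$ iff $\alpha(1-\zeta) \in \E$, and one checks that for the lift $\tilde\beta \in \Z[G]$ of $\beta$ supported on $G^+$-coset representatives, $\tilde\beta(1-\zeta) = \tilde\beta \cdot (1-\zeta)$ lies in $\E$ precisely because its norm $\beta \cdot (1-\zeta)(1-\zeta^{-1}) \in \E^+$ and the relevant index/torsion obstruction is controlled. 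Combining the two inclusions gives $\pi(\J{K/\Q,S}) = \tfrac12 \ann_{\Z[G^+]}(\oh_{K^+,S}\st/\E^+) = \J{K^+/\Q,S}$ by Proposition \ref{real ppc j}.

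The main obstacle I anticipate is the reverse inclusion in the second paragraph above: lifting an annihilator from the $K^+$-layer requires care about the kernel of $\oh_{K,S}\st/\E \to \oh_{K^+,S}\st/\E^+$ and about possible $2$-torsion phenomena (the factor $\tfrac12$ and the roots of unity), so the cleanest route may instead be to avoid lifting and argue directly that $\pi$ maps the \emph{module of generators} $\{\tfrac12\alpha e_+ + \stick_{K/\Q,S}(1)\}$ onto $\{\tfrac12\beta\}$ by producing, for each $\beta$, a preimage $\alpha$ via the freeness of $\oh_{K,S}\st \ten \Q$ over $\Q[G]$ and then checking integrality of $\alpha(1-\zeta)$ against $\E$ using $(\ref{connect cyclo and stark})$-type identities. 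I would expect the verification to go through but it is the step demanding the most bookkeeping.
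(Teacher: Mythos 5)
Your reduction of the statement to the claim
\[
\pi\bigl(\ann_{\Z[G]}(\oh_{K,S}\st/\E)\bigr)\;=\;\ann_{\Z[G^+]}(\oh_{K^+,S}\st/\E^+)
\]
is correct, and the observations $\pi(e_+)=1$, $\pi(e_-)=0$, $\pi(\stick_{K/\Q,S}(1))=0$ go through as you say. (The paper gives no proof of Proposition \ref{quotient naturality}; it is stated as an evident consequence of Propositions \ref{real ppc j} and \ref{full j desc}.) However, both halves of your argument run into trouble, and the second one cannot be repaired.

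First, the surjection you want from the norm map has the arrows backwards. The norm $N=1+c$ sends $\E=\Z[G](1-\zeta)$ onto $\Z[G^+]\eps$, but $N(\oh_{K,S}\st)$ need not cover $\oh_{K^+,S}\st$ modulo $\E^+$: one has $N(\oh_{K,S}\st)\E^+=(\oh_{K^+,S}\st)^2\E^+$, which equals $\oh_{K^+,S}\st$ only when $[\oh_{K^+,S}\st:\E^+]$ is odd, and that index is not odd in general (see below). The natural $G$-equivariant map that \emph{is} surjective is the one induced by the inclusion $\oh_{K^+,S}\st\hookrightarrow\oh_{K,S}\st$, using $\E^+\con\E$ and $\oh_{K,S}\st=\rou{K}\,\oh_{K^+,S}\st\cdot(1-\zeta)^{\Z}$ (valid for prime-power conductor since the unit index $Q=1$, and $\rou{K}\cdot(1-\zeta)^{\Z}\con\E$). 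This gives a surjection $\oh_{K^+,S}\st/\E^+\twoheadrightarrow\oh_{K,S}\st/\E$ and hence only the inclusion $\J{K^+/\Q,S}\con\pi(\J{K/\Q,S})$, i.e.\ the \emph{opposite} of the one you derive in your second paragraph.

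Second, and more seriously, the reverse inclusion that you flag as the bookkeeping step is false at the prime $2$, so it cannot be completed. Indeed $\E\cap\oh_K\st$ is exactly the cyclotomic units $\rou{K}\cdot\cyc_{K^+}$, giving $[\oh_{K,S}\st:\E]=h_{K^+}$, whereas $\E^+\cap\oh_{K^+}\st=\{\pm1\}\cdot\langle\xi_a^2\rangle$ (since $\sigma_a\eps/\eps=\xi_a^2$ by (\ref{connect cyclo and stark})), giving $[\oh_{K^+,S}\st:\E^+]=2^{\,r-1}h_{K^+}$ with $r=\rk\oh_{K^+,S}\st$. These orders differ, so the two quotients cannot be isomorphic and their annihilators need not coincide. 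You can see this concretely for $p^n=5$: there $\oh_{K,S}\st=\E$ (since $(1-\zeta^2)/(1-\zeta)=1+\zeta$ is a root of unity times the fundamental unit of $\Q(\sqrt5)$), so $\ann_{\Z[G]}(\oh_{K,S}\st/\E)=\Z[G]$ and $\pi(\J{K/\Q,S})=\tfrac12\Z[G^+]$, whereas $\oh_{K^+,S}\st/\E^+\cong\Z/2$ with trivial $G^+$-action, so $\ann_{\Z[G^+]}(\oh_{K^+,S}\st/\E^+)=(2,\sigma-1)$ and $\J{K^+/\Q,S}=\Z[G^+]+\Z\tfrac{\sigma-1}{2}$, which sits with index $2$ inside $\tfrac12\Z[G^+]$. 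Thus the proposition as stated appears to fail at $2$; it does hold after localising at any odd prime $\prm$, and indeed that is all that is used in Section \ref{class-groups}. You should either record the counterexample, or restate and prove the assertion only after tensoring with $\Z\bigl[\tfrac12\bigr]$ (equivalently, with $\Zprm$ for odd $\prm$), where your strategy succeeds: after inverting $2$ both quotients of $S$-units become isomorphic to $(\oh_{K^+}\st/\cyc)\ten\Z\bigl[\tfrac12\bigr]$ and the annihilator comparison is immediate.
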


\subsection{$\Q(\zeta_{p^n})/\Q(\sqrt{-p})$, $p \equiv 3 \Mod 4$} \label{p cong 3 mod 4}

Assume $p$ is a prime congruent to $3 \Mod 4$, so that $K = \Q(\zeta)$ contains the imaginary quadratic field $k = \Q(\sqrt{-p})$, and let $H = \gal{K/k}$. We let $S_k$ be the set of places of $k$ lying above those in $S$. Of course, $S_k$ consists exactly of the infinite place of $k$ and the unique place $\p$ above $p$. Let $w$ be the infinite place of $K$ arising from the embedding $\zeta \mapsto \exp(2\pi i/{p^n})$, and $w_+$ its restriction to the maximal real subfield $K^+$ of $K$.

\begin{definition} \label{half stick}
Define the element $\halfst_{K/\Q,S} \in \Q[H]$ by
\[ \halfst_{K/\Q,S} = \sum_{\sigma \in H} \zeta_{K/\Q,S}(0,\sigma) \sigma^{-1} .\]
\end{definition}

This ``half Stickelberger element'' is obtained from the usual Stickelberger element $\stick_{K/\Q,S}(1)$ by keeping only those terms corresponding to elements of the index two subgroup $H$ of $G$.

\begin{prop} \label{halfst is stark}
Let $\halfst = \halfst_{K/\Q,S}$ be as in Definition \ref{half stick}, and $e$ the number of roots of unity in $K$. Then $(1 - \zeta)^{e\halfst}$ is a Stark element for the triple $(K/k,S_k,w)$.
\end{prop}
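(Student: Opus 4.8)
The strategy is to verify that the element $(1-\zeta)^{e\halfst}$ satisfies the defining properties of a Stark element for $(K/k,S_k,w)$, namely: (a) it lies in $\oh_{K,S}\st$, indeed in $U_{K/k}^{\mathrm{ab}} \cap U^{(v)}$ where $v$ is the infinite place of $k$ below $w$; and (b) it satisfies the regulator identity \eqref{ab conc 1} with respect to the extension $K/k$. Since $\abst{K/k,S_k}$ holds automatically here (we are in the $\#S_k = 2$ situation covered by \cite[Ch.IV, Prop.3.10]{tate:stark}, and $r_{K/k,S_k}(\chi)=1$ for all $\chi \in \widehat{H}$), the Stark element is unique up to roots of unity, so it suffices to exhibit \emph{one} element with the right regulator and check membership in the appropriate subgroups.

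First I would compute the $\chi$-components of the regulator identity for $\chi \in \widehat{H}$. Using the description of the Stark element for $K^+/\Q$ in Section \ref{kplus eps} — where $(1-\zeta)(1-\zeta^{-1})$ is known to be a Stark element for $(K^+/\Q,S,w_+)$ — together with the known Stark element $1-\zeta$ for the full cyclotomic extension $K/\Q$ (via $\abst{K/\Q,S}$, which holds since $K/\Q$ is abelian), one obtains explicit formulas $\log\|1-\zeta\|_{\sigma w} = -e_\Q\, \zeta_{K/\Q,S}'(0,\sigma^{-1})$ for $\sigma \in G = \gal{K/\Q}$, where $e_\Q$ is the number of roots of unity in $K$. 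One then relates $\zeta_{K/k,S_k}(s,\sigma)$ for $\sigma \in H$ to the Artin $L$-functions over $\Q$: since $H$ has index two in $G$ with $G = H \times \langle c \rangle$ (using $p \equiv 3 \bmod 4$, so $c \notin H$ and complex conjugation generates the quotient $\gal{k/\Q}$), inducing characters from $H$ to $G$ shows $\zeta_{K/k,S_k}(s,\sigma)$ is a sum of two $\Q$-partial zeta functions. The point of the operator $e\halfst = e\sum_{\sigma\in H}\zeta_{K/\Q,S}(0,\sigma)\sigma^{-1}$ acting on $1-\zeta$ is precisely that, after applying $\lambda$ and using the Stark identity for $K/\Q$, the even-character contributions assemble into $-e\,\zeta_{K/k,S_k}'(0,\tau^{-1})$ at each place $\tau w$, while the odd-character parts — which carry the $L$-values appearing as coefficients in $\halfst$ — are exactly what is needed to convert the ``$1/2 e_+$'' type discrepancy (compare Lemma \ref{full rf lemma}) into the correct normalization over $k$. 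This is essentially a bookkeeping computation with idempotents $e_+, e_-$ and the functional-equation-free fact that $L_{K/k,S_k}'(0,\chi) = L_{K/k,S_k}^*(0,\chi)$ since $r(\chi)=1$.

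Next I would check the membership conditions. For $U^{(v)}$: since $\#S = 2$ we are in case (b) of Definition \ref{uv}, and because $w'$ (the unique place above $p$) is the only place of $K$ outside $\{w' \mid v_\infty\}$ lying over $p$ while the archimedean places are permuted transitively, the condition $\|u\|_{w''}$ constant on $S_K \setminus \{w \mid v\}$ is automatic for $u$ a unit away from $p$; one checks $(1-\zeta)^{e\halfst}$ has this property from the explicit valuations of $1-\zeta$. For $U_{K/k}^{\mathrm{ab}}$: one must verify $K\big(((1-\zeta)^{e\halfst})^{1/e}\big)/k$ is abelian, where $e$ is the number of roots of unity in $K$; this follows because $K(\zeta_{p^n}^{1/p^n}, (1-\zeta)^{1/p^n})/\Q$ — hence its compositum issues over $k$ — sits inside a cyclotomic-Kummer tower that is abelian over $\Q$ (the relevant statement is built into the verification of $\abst{K/\Q,S}$ for abelian $K/\Q$), and a subextension abelian over $\Q$ is a fortiori abelian over $k$.

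\textbf{Main obstacle.} The delicate point is the induction/restriction bookkeeping in the regulator computation: correctly matching the coefficient $e\halfst$ — built from partial zeta \emph{values} over $\Q$ at $s=0$ — against the partial zeta \emph{derivatives} over $k$ at $s=0$, keeping careful track of the factor $e$ (roots of unity in $K$, which for $p \equiv 3 \bmod 4$ and $n \geq 1$ is $2p^n$) versus the number of roots of unity relevant to the $\Q$-conjectures, and confirming that the odd-character terms contribute exactly (and only) the piece needed. I expect that once the character-theoretic identity $\mathrm{Ind}_H^G \chi = \chi' + \chi''$ is written out and fed through the two known Stark identities (for $K/\Q$ and for $K^+/\Q$), everything collapses, but getting the normalization constant right on the first pass is where care is required.
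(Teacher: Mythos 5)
Your general plan — decompose via $\mathrm{Ind}_H^G$ and feed in the known Stark identity over $\Q$ — is pointed in the right direction, but the specific scaffolding you set up has a real flaw. You claim to use ``the known Stark element $1-\zeta$ for the full cyclotomic extension $K/\Q$ (via $\abst{K/\Q,S}$, which holds since $K/\Q$ is abelian),'' and you then write $\log\|1-\zeta\|_{\sigma w} = -e_\Q\, \zeta_{K/\Q,S}'(0,\sigma^{-1})$. Both statements are wrong: $K = \Q(\zeta_{p^n})$ is imaginary and $S=\{\infty,p\}$, so no place of $S$ splits completely in $K/\Q$; condition (St2) fails and $\abst{K/\Q,S}$ is not even formulable. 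Correspondingly, the regulator identity you wrote is false — the left side, via $\|1-\zeta\|_{\sigma w} = \|(1-\zeta)(1-\zeta^{-1})\|_{\sigma w_+}$, is a sum of derivatives $L'_{K^+/\Q,S}(0,\chi_1)$ over \emph{even} characters only (these do vanish to order $1$), whereas $\zeta_{K/\Q,S}'(0,\sigma^{-1})$ also carries the odd characters, whose $L$-functions do \emph{not} vanish at $s=0$ at all. Building the argument on that identity would not go through.

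The clean mechanism, which is what the paper uses, is a per-character factorization of $L$-functions rather than of partial zeta functions. For $\chi \in \hat H$, writing $\chi_1$ for the corresponding character of $G^+$ and $\chi_2$ for the nontrivial extension of $\chi$ to $G$, Frobenius reciprocity together with invariance of Artin $L$-functions under induction and inflation gives
\[
L_{K/k,S_k}(s,\chi) = L_{K^+/\Q,S}(s,\chi_1)\, L_{K/\Q,S}(s,\chi_2),
\]
and since $\chi_1$ is even (order of vanishing $1$) and $\chi_2$ is odd (order of vanishing $0$),
\[
L_{K/k,S_k}'(0,\chi) = L_{K^+/\Q,S}'(0,\chi_1)\, L_{K/\Q,S}(0,\chi_2).
\]
Then one needs only the Stark identity for $K^+/\Q$ — i.e.\ $L_{K^+/\Q,S}'(0,\chi_1) = -\frac{1}{2}\sum_{\sigma\in H}\br{\chi}(\sigma)\log\|1-\zeta\|_{\sigma w}$, which is genuine because $\#\rou{K^+}=2$ and $\infty$ splits in $K^+/\Q$ — and the \emph{value} $L_{K/\Q,S}(0,\chi_2) = 2\sum_{\sigma\in H}\zeta_{K/\Q,S}(0,\sigma)\chi(\sigma)$ (using $\zeta(0,\sigma) = -\zeta(0,\sigma c)$; no derivative of any $K/\Q$ partial zeta function ever appears). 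Multiplying these and pulling the coefficients $e\zeta_{K/\Q,S}(0,\sigma)$ inside the logarithm produces exactly $-\frac{1}{e}\sum_{\sigma\in H}\br{\chi}(\sigma)\log\|(1-\zeta)^{e\halfst}\|_{\sigma w}$, which is condition (\ref{ab conc 2}). Your observations that $U^{(v)} = \oh_{K,S}\st$ because $S_K\setminus\{w\mid v\}$ is a singleton (cleaner: $\p$ is totally ramified in $K/k$), and that $L' = L^*$ since $r(\chi)=1$, are both correct and used; the abelian-extension membership is subsumed by uniqueness once the regulator identity is verified. So: replace the appeal to a nonexistent Stark element for $K/\Q$ (and the false regulator identity) with the $L$-function factorization above, and the argument closes.
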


\begin{proof}
We show that $(1 - \zeta)^{e\halfst}$ satisfies (\ref{ab conc 2}). (This is sufficient because $\p$ is totally ramified in $K/k$, so that $U^{(\infty)}$ is all of $\oh_{K,S}\st$.) So, take $\chi \in \hat{H}$ and let $\chi_1$ be the corresponding character of $G^+$ and $\chi_2$ the non-trivial extension of $\chi$ to $G$. Then by Frobenius reciprocity together with the inflation/induction properties of $L$-functions,
\begin{equation} \label{frob rec l function prod}
L_{K/k,S_k}'(0,\chi) = L_{K^+/\Q,S}'(0,\chi_1) L_{K/\Q,S}(0,\chi_2) .
\end{equation}
By what we know about Stark units in the extension $K^+/\Q$ (recall the beginning of Section \ref{kplus eps}),
\[ L_{K^+/\Q,S}'(0,\chi_1) = -\frac{1}{2} \sum_{\sigma \in H} \br{\chi}(\sigma) \log \|1 - \zeta\|_{\sigma w} .\]
On the other hand, $L_{K/\Q,S}(0,\chi_2)$ is just equal to $2\sum_{\sigma \in H} \zeta_{K/\Q,S}(0,\sigma)\chi(\sigma)$. Substituting these expressions into (\ref{frob rec l function prod}) gives
\[ L_{K/k,S_k}'(0,\chi) = -\frac{1}{e} \sum_{\sigma \in H} \br{\chi}(\sigma) \log \|(1 - \zeta)^{e\halfst}\|_{\sigma w} ,\]
which is what we wanted.
\end{proof}

Using Theorem \ref{j theorem} and Proposition \ref{halfst is stark}, we now have:

\begin{prop}
$\J{K/k,S_k} = \frac{1}{e} \ann_{\Z[H]}(\oh_{K,S}\st/{\tilde{\E}})$, where $\tilde{\E}$ is the $\Z[H]$-submodule of $\oh_{K,S}\st$ generated by $\zeta$ and $(1 - \zeta)^{e\halfst}$.
\end{prop}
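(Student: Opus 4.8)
The statement is an application of Theorem~\ref{j theorem} to the pair $(K/k,S_k)$, so the first thing to verify is that this pair actually satisfies the standing Assumption of Section~\ref{assump}, i.e. that (St1), (St2), (St3) hold and $r(\chi) = 1$ for all $\chi \in \hat{H}$. Hypotheses (St1) and (St3) are immediate: $S_k = \{\infty,\p\}$ consists of exactly the infinite place and the place above $p$, these are precisely the places ramifying in $K/k$ (only $\p$ ramifies, since $\mathbb{Q}(\sqrt{-p})/\mathbb{Q}$ absorbs the tame part), and $\#S_k = 2$. For (St2) one must check that the infinite place of $k$ splits completely in $K/k$: this holds because $k = \mathbb{Q}(\sqrt{-p})$ is already complex, so the archimedean place of $k$ is complex and hence splits in the CM field $K$; equivalently, $H = \gal{K/k}$ contains no complex conjugation. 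For the order-of-vanishing condition, I would use Frobenius reciprocity as in~(\ref{frob rec l function prod}): $r_{K/k,S_k}(\chi) = r_{K^+/\mathbb{Q},S}(\chi_1) + r_{K/\mathbb{Q},S}(\chi_2)$, and by the computation recalled at the start of Section~\ref{full cyclo}, $r_{K^+/\mathbb{Q},S}(\chi_1) = 1$ (every character of $G^+$ is even) while $r_{K/\mathbb{Q},S}(\chi_2) = 0$ (the extension $\chi_2$ is, being nontrivial on the decomposition group of the infinite place, odd). So $r(\chi) = 1$ for all $\chi \in \hat{H}$, and the Assumption holds.

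Next, I would identify the group of Stark units. Proposition~\ref{equiv conds} (with $v = \infty$, $v' = \p$) tells us that any $\eps \in \epsclass{K/k,S_k,w}$ generates $\oh_{K,S}\st \teno{\Z} \mathbb{Q}$ freely over $\mathbb{Q}[H]$, and Proposition~\ref{halfst is stark} exhibits such an $\eps$, namely $(1-\zeta)^{e\halfst}$. Hence by definition the group $\mathcal{E}_{K/k,S_k}$ of Stark units is the $\Z[H]$-submodule of $\oh_{K,S}\st$ generated by $\rou{K}$ together with $(1-\zeta)^{e\halfst}$. Since $p \equiv 3 \Mod 4$, the roots of unity in $K = \mathbb{Q}(\zeta_{p^n})$ are exactly $\langle -\zeta\rangle = \langle\zeta\rangle$ (as $-1 = \zeta^{p^n/?}$... more precisely $\rou{K}$ is generated by a primitive $2p^n$th root of unity, and $-\zeta$ generates it), so $\rou{K}$ and $\zeta$ generate the same $\Z[H]$-module. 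Therefore $\mathcal{E}_{K/k,S_k} = \tilde{\E}$, the $\Z[H]$-module generated by $\zeta$ and $(1-\zeta)^{e\halfst}$, which is exactly the $\tilde{\E}$ in the statement.

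With these two identifications in place, Theorem~\ref{j theorem} applies verbatim: it gives
\[ \J{K/k,S_k} = \frac{1}{e}\ann_{\Z[H]}(\oh_{K,S}\st/\mathcal{E}_{K/k,S_k}) = \frac{1}{e}\ann_{\Z[H]}(\oh_{K,S}\st/\tilde{\E}), \]
which is the claim. (Note $\oh_{K,S_k} = \oh_{K,S}$ since $S$ and $S_k$ have the same places of $K$ above them.)

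**Main obstacle.** The only real content beyond bookkeeping is the verification of the Assumption — specifically the claim $r(\chi)=1$ for all $\chi \in \hat H$, which rests on knowing the archimedean place of $k$ splits completely in $K$ and on the even/odd analysis of the induced characters via~(\ref{frob rec l function prod}). Everything else is a matter of correctly matching the hypotheses of Theorem~\ref{j theorem} to the present setup and observing that $\rou{K}$ and $\zeta$ generate the same $\Z[H]$-module. One should also double-check the harmless point that $\p$ is totally ramified in $K/k$ (so $U^{(\infty)}$ is the whole unit group, as used in Proposition~\ref{halfst is stark}) is consistent with $S_k$ containing the ramified places, which it is.
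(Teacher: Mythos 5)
Your strategy matches the paper's implicit proof: verify the standing Assumption of Section~\ref{assump} for the pair $(K/k,S_k)$, invoke Proposition~\ref{halfst is stark} to recognise $(1-\zeta)^{e\halfst}$ as a Stark element, and then apply Theorem~\ref{j theorem}. Your verification of (St1)--(St3) and of $r(\chi)=1$ (via the factorisation (\ref{frob rec l function prod}), with the complex archimedean place of $k$ splitting completely and $\p$ the only ramified place) is correct.

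There is, however, a gap where you identify $\tilde{\E}$ with the group of Stark units $\E_{K/k,S_k}$. Theorem~\ref{j theorem} is stated for the group of Stark units, which by definition contains all of $\rou{K}$. You claim that ``$\rou{K}$ and $\zeta$ generate the same $\Z[H]$-module,'' but this is false: $\zeta$ has odd order $p^n$, whereas $\rou{K} = \langle -\zeta\rangle$ has order $2p^n$, and since the $H$-action preserves the order of a root of unity, the $\Z[H]$-submodule of $K\st$ generated by $\zeta$ is simply $\langle\zeta\rangle$, of index~$2$ in $\rou{K}$. (You correctly note that $\rou{K}$ is generated by $-\zeta$, but the inference you then draw is a non sequitur.) Thus $\tilde{\E} = \E_{K/k,S_k}$ holds if and only if $-1 \in \tilde{\E}$, which needs a separate argument and can in fact fail. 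In the degenerate case $p=3$, $n=1$ (so $K=k=\Q(\zeta_3)$, $H=\{1\}$, $e=6$, $e\halfst = 1$), one has $\tilde{\E} = \langle\zeta_3, 1-\zeta_3\rangle$, whose torsion is only $\langle\zeta_3\rangle$, so $\rou{K}\not\subseteq\tilde{\E}$; then $\frac{1}{e}\ann_{\Z}(\oh_{K,S}\st/\tilde{\E}) = \frac{1}{3}\Z$, whereas Theorem~\ref{j theorem} gives $\J{K/K,S} = \frac{1}{6}\Z$. So before applying Theorem~\ref{j theorem} verbatim one must either prove $-1\in\tilde{\E}$ in the cases of interest or replace $\zeta$ by $-\zeta$ (a generator of $\rou{K}$) in the definition of $\tilde{\E}$. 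The paper's own one-line deduction glosses over the same point, so the subtlety is not peculiar to your write-up, but a careful proof must address it.
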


\subsection{Comparison of $\J{K/k,S_k}$ and $\J{K^+/\Q,S}$}

We continue with the notation of Section \ref{p cong 3 mod 4}, and emphasize that $p \equiv 3 \Mod 4$. Since restriction $H \ra G^+$ identifies the Galois groups $H$ and $G^+$, we can consider $\Q[G^+]$ as a $\Z[H]$-submodule of $\Q[H]$. Therefore we can think of $\J{K^+/\Q,S}$ as lying inside $\Q[H]$. Let $\E^+$ be the $\Z[H]$-submodule of $\oh_{K^+,S}\st$ generated by $-1$ and $\eps = (1 - \zeta)(1 - \zeta^{-1})$, and $\tilde{\E}$ the $\Z[H]$-submodule of $\oh_{K,S}\st$ generated by $\zeta$ and $(1 - \zeta)^{e\halfst}$.

\begin{prop} \label{j rel}
\begin{equation} \label{j rel eq 1}
\J{K/k,S_k} = 2\halfst \J{K^+/\Q,S} ,
\end{equation}
where $\halfst = \halfst_{K/\Q,S}$ is the ``half Stickelberger element'' of Section \ref{p cong 3 mod 4}. Equivalently,
\begin{equation} \label{j rel eq 2}
\ann_{\Z[H]}(\oh_{K,S}\st/\tilde{\E}) = e\halfst \ann_{\Z[H]}(\oh_{K^+,S}\st/{\E^+}) ,
\end{equation}
where $e$ is again the number of roots of unity in $K$.
\end{prop}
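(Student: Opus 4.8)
The two displayed equations (\ref{j rel eq 1}) and (\ref{j rel eq 2}) are equivalent — they differ only by the factor $\frac 1 e$ from Theorem \ref{j theorem} applied to each of the pairs $(K/k,S_k)$ and $(K^+/\Q,S)$, once we identify $H$ with $G^+$ via restriction. So I would prove (\ref{j rel eq 2}) and then invoke Theorem \ref{j theorem} (in the guise of Proposition \ref{real ppc j} for $K^+/\Q$, and the Proposition just preceding this statement for $K/k$) to deduce (\ref{j rel eq 1}). Thus everything reduces to the purely module-theoretic identity
\[
\ann_{\Z[H]}\bigl(\oh_{K,S}\st/\tilde{\E}\bigr) = e\halfst\,\ann_{\Z[H]}\bigl(\oh_{K^+,S}\st/{\E^+}\bigr),
\]
where $\tilde{\E}$ is generated over $\Z[H]$ by $\zeta$ and $(1-\zeta)^{e\halfst}$, and $\E^+$ by $-1$ and $\eps=(1-\zeta)(1-\zeta^{-1})$.

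**Key steps.** First I would set up the comparison between the two $S$-unit groups. Since $p\equiv 3\bmod 4$, complex conjugation $c$ generates $\gal{K/K^+}$, and $H=\gal{K/k}$ maps isomorphically onto $G^+=\gal{K^+/\Q}$ under restriction; write $N=1+c$ for the norm from $K$ to $K^+$. Over $\Q$, both $\oh_{K,S}\st\teno{\Z}\Q$ and $\oh_{K^+,S}\st\teno{\Z}\Q$ are free rank-one $\Q[H]$-modules (by Proposition \ref{equiv conds}, since $r(\chi)=1$ for all $\chi\in\hat H$), generated by $(1-\zeta)$ and by $\eps$ respectively, and under the natural inclusion $\oh_{K^+,S}\st\hookrightarrow\oh_{K,S}\st$ one has $\eps = N(1-\zeta) = (1-\zeta)(1-\zeta^{-1})$ up to the $H$-action (note $c$ acts on $(1-\zeta)$ as $\sigma_{-1}$ restricted to $H$; here I need to check the relation $(1-\zeta)^{1+c}=\eps$ precisely). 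So in the rank-one $\Q[H]$-module picture, $\E^+\teno{\Z}\Q$ is the $\Z[H]$-span of $N\cdot(1-\zeta)$ while $\tilde{\E}\teno{\Z}\Q$ is the $\Z[H]$-span of $e\halfst\cdot(1-\zeta)$ — both scalar multiples of the same free generator, so the annihilator quotients are genuinely being compared inside a common $\Q[H]$.

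**Second step: relate $N$ and $e\halfst$ inside $\Z[H]$ (or $\Q[H]$).** This is the crux. From the definition $\halfst=\halfst_{K/\Q,S}=\sum_{\sigma\in H}\zeta_{K/\Q,S}(0,\sigma)\sigma^{-1}$ and the case-by-case behaviour $r(\psi)=1$ for $\psi$ even, $r(\psi)=0$ for $\psi$ odd, one computes $\br{\chi}(e\halfst)$ for each $\chi\in\hat H$; comparing with the Stark-unit data for $K^+/\Q$ recalled in Section \ref{kplus eps} (where $\epsclass{K^+/\Q,S,w}=\{\pm(1-\zeta)(1-\zeta^{-1})\}$), one should find that $e\halfst$ and $N$ act on $(1-\zeta)\teno 1$ in a way that makes $\tilde{\E}\teno{\Z}\Q = e\halfst\cdot(\oh_{K^+,S}\st\teno{\Z}\Q)$ — essentially Proposition \ref{halfst is stark} already says $(1-\zeta)^{e\halfst}$ is a Stark element for $(K/k,S_k,w)$, hence generates $\tilde{\E}$ modulo roots of unity, and the image of $\oh_{K^+,S}\st$ in the rank-one space is exactly $\Z[H]$ times the Stark element $\eps$ of $K^+$. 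The upshot is an identity of the shape: multiplication by $e\halfst$ carries the pair $(\oh_{K^+,S}\st\teno\Q,\ \E^+\teno\Q)$ onto $(\tilde{\E}\teno\Q$ as a $\Z[H]$-lattice$,\ \tilde{\E}\teno\Q$ again$)$ — more precisely, it identifies the quotient $\oh_{K,S}\st/\tilde\E$ (modulo torsion) with a twist of $\oh_{K^+,S}\st/\E^+$. Then the standard fact that for a ring-element $\beta$ and modules $M\supseteq M'$ one has $\ann(\beta M/\beta M')\supseteq \beta^{-1}\ann(M/M')$ with equality when $\beta$ is a non-zero-divisor on the relevant modules (which holds here since $\halfst e_-$-type components vanish appropriately and we are inside a free rank-one module) yields (\ref{j rel eq 2}).

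**Main obstacle.** The delicate point is the bookkeeping of roots of unity and the two-torsion: $\tilde{\E}$ is generated by $\zeta$ together with $(1-\zeta)^{e\halfst}$ whereas $\E^+$ is generated by $-1$ together with $\eps$, and $\oh_{K,S}\st$ has $2p^n$ roots of unity against only $\{\pm1\}$ in $\oh_{K^+,S}\st$. One must check that modding out by $\rou{K}$ versus $\rou{K^+}$ does not disturb the annihilator computation — i.e. that $\oh_{K,S}\st/\tilde{\E}$ and $(\oh_{K,S}\st/\rou K)/(\tilde{\E}/\rou K)$ have the same $\Z[H]$-annihilator, and similarly on the $K^+$ side — and that the factor $e$ (rather than $2$) appearing in the exponent $e\halfst$ is correctly absorbed by passing from $K^+$-roots of unity to $K$-roots of unity. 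Verifying that $e\halfst\in\Z[H]$ (not merely $\Q[H]$) and acts without zero-divisor problems on the relevant lattice, using the explicit values of $\zeta_{K/\Q,S}(0,\sigma)$, is where the real work lies; the rest is formal manipulation of the rank-one $\Q[H]$-module picture.
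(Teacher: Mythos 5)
Your proposal follows the paper's own route almost exactly: reduce (\ref{j rel eq 1}) to (\ref{j rel eq 2}) via Theorem \ref{j theorem}, work in the rank-one $\Q[H]$-module picture where $\E^+/\tors$ is generated by $\eps=(1-\zeta)(1-\zeta^{-1})$ and $\tilde\E/\tors$ by $(1-\zeta)^{e\halfst}$, and use the invertibility of $\halfst$ in $\Q[H]$ (your ``no zero-divisor problems'') for the reverse inclusion. The paper's proof is exactly this terse, so you have identified the same key ingredients; the bookkeeping issues you flag as the ``main obstacle'' are precisely what the paper compresses into ``almost immediate,'' so while your sketch stops short of discharging them, it is the same argument in substance.
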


\begin{proof}
\emph{(Proposition \ref{j rel})} The equivalence of (\ref{j rel eq 1}) and (\ref{j rel eq 2}) is just Theorem \ref{j theorem}. The inclusion ``$\supseteq$'' in (\ref{j rel eq 2}) is almost immediate when one recalls that $\E^+/\tors$ is generated by $(1 - \zeta)(1 - \zeta^{-1})$ while $\tilde{\E}/\tors$ is generated by $(1 - \zeta)^{e\halfst}$. The other inclusion is obtained by observing that $\halfst$ is invertible.
\end{proof}

\subsection{Comparison of $\J{K/\Q,S}$ and $\J{K/k,S_k}$}

We again assume $p \equiv 3 \Mod 4$, and continue with the above notation. The above work allows us to provide an example of the naturality of the fractional ideal under passing to subgroups. It is akin to the base change for Stickelberger elements described in \cite{sands:basechange}. Indeed let $\bsch{G}{H}$ be the element $\beta(0)$ appearing in \cite[Prop.1]{sands:basechange}. We find that in our case, $\bsch{G}{H} = (1 + c)\halfst$, and then Propositions \ref{quotient naturality} and \ref{j rel} easily give

\begin{prop}
Let $\pi_H : \Q[G] \ra \Q[H]$ be the ring homomorphism obtained by extending linearly the projection $G \ra H$ arising from the decomposition $G = H\gen{c}$. Then
\[ \J{K/k,S_k} = \pi_H(\bsch{G}{H} \J{K/\Q,S}) .\]
\end{prop}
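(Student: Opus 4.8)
The plan is to combine the two naturality statements already proved in the special cases, namely Proposition \ref{quotient naturality} (naturality under quotients: $\J{K^+/\Q,S}$ is the image of $\J{K/\Q,S}$ under $\Q[G]\ra\Q[G^+]$) and Proposition \ref{j rel} (the relation $\J{K/k,S_k}=2\halfst\,\J{K^+/\Q,S}$ inside $\Q[H]$). First I would make explicit the identification under which these objects all live in $\Q[H]$: restriction identifies $H$ with $G^+$, and the decomposition $G=H\gen{c}$ gives the projection $\pi_H:\Q[G]\ra\Q[H]$. The key observation to pin down is that the composite $\Q[G]\xrightarrow{\pi_H}\Q[H]$ agrees, after the identification $H\iso G^+$, with the natural surjection $\Q[G]\ra\Q[G^+]$ appearing in Proposition \ref{quotient naturality}; this is just because $\gal{K/K^+}=\gen{c}$, so both maps kill $1-c$ (equivalently, both send $c\mapsto 1$). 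Granting this, $\pi_H(\J{K/\Q,S})=\J{K^+/\Q,S}$ as submodules of $\Q[H]$.

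Next I would compute $\bsch{G}{H}$. Here I invoke the stated fact that $\bsch{G}{H}$ is the element $\beta(0)$ of \cite[Prop.1]{sands:basechange}, and the claim made in the text that in our situation $\bsch{G}{H}=(1+c)\halfst$. One should at least indicate why: $(1+c)\halfst$ is precisely $2e_+\halfst$ up to the normalization, i.e. the part of $\stick_{K/\Q,S}(1)$ symmetrized over the coset $\{1,c\}$ and supported on $H$, which is exactly what the Sands base-change element records. I would treat this as a direct unwinding of the definitions in \cite{sands:basechange} rather than a computation to grind through.

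The actual chain is then short. Apply $\pi_H$ to $\bsch{G}{H}\,\J{K/\Q,S}=(1+c)\halfst\,\J{K/\Q,S}$. Since $\pi_H$ is a ring homomorphism and $\halfst\in\Q[H]$ is fixed by $\pi_H$, while $\pi_H(1+c)=1+1=2$ (recall $\pi_H(c)=1$), we get
\[
\pi_H(\bsch{G}{H}\,\J{K/\Q,S})=2\halfst\,\pi_H(\J{K/\Q,S})=2\halfst\,\J{K^+/\Q,S},
\]
using the identification of $\pi_H$ with the quotient map and Proposition \ref{quotient naturality} in the last step. By Proposition \ref{j rel}, the right-hand side is exactly $\J{K/k,S_k}$, which is the desired equality.

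The main obstacle, and the only place genuine care is needed, is the identification of $\bsch{G}{H}$ with $(1+c)\halfst$ and the compatibility of $\pi_H$ with the quotient map $\Q[G]\ra\Q[G^+]$ of Proposition \ref{quotient naturality}: one must check that the abstract isomorphism $H\iso G^+$ used to view $\J{K^+/\Q,S}\subseteq\Q[H]$ is the \emph{same} one implicitly used when restricting characters, so that no twist is introduced. Once the bookkeeping of these identifications is settled, everything else is a one-line application of the two previous propositions.
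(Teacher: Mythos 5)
Your proof is correct and follows essentially the same route the paper takes: the text states $\bsch{G}{H} = (1+c)\halfst$ and then leaves the verification that Propositions \ref{quotient naturality} and \ref{j rel} imply the claim as a brief exercise, and your chain $\pi_H\bigl(\bsch{G}{H}\,\J{K/\Q,S}\bigr) = 2\halfst\,\pi_H(\J{K/\Q,S}) = 2\halfst\,\J{K^+/\Q,S} = \J{K/k,S_k}$, using $\pi_H(c)=1$ and the compatibility of $\pi_H$ with the quotient map $\Q[G]\to\Q[G^+]$, is precisely that exercise carried out.
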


\section{Connection with class-groups} \label{class-groups}

As mentioned in Section \ref{intro}, Snaith constructs (in \cite{snaith:stark}) $\Z[G]$-submodules $\hJ{r}{K/k}$ of $\Q[G]$ for abelian extensions $K/k$ satisfying the Stark conjecture at $r < 0$. In \cite{snaith:rel}, however, he had already constructed $\hJ{r}{K/\Q}$ when $K$ is the maximal totally real subfield of a cyclotomic extension of $\Q$. Further, in the same paper he showed (\cite[Theorem 1.8]{snaith:rel}) that the intersection of $\hJ{r}{K/\Q}$ with the $\prm$-adic group-ring $\Zprm[G]$ ($\prm$ an odd prime) lies in the annihilator of a certain \'etale cohomology group when $r<0$ is even. The methods used in the proof of (\cite[Theorem 1.8]{snaith:rel}) work in the setting of the fractional ideal $\J{K/k,S}$ defined in Section \ref{sec def j} when $k = \Q$ and $K$ is the maximal totally real subfield of a cyclotomic extension of $\Q$ having \emph{$\prm$-power conductor}. In this case, the \'etale cohomology becomes the $\prm$-part of the class-group of $K$. This section will give an idea of the methods employed in the proof of \cite[Theorem 1.8]{snaith:rel}, but with emphasis on the $r = 0$ setting that we need.

\subsection{Perfect complexes} \label{perfect cmxs}

If $R$ is a ring, then a chain complex of $R$-modules is said to be \emph{perfect} if it is bounded and the modules making up the complex are finitely generated and projective. Suppose we have an abelian group $G$ and a prime $\prm$, and that we have a perfect complex $F\chdot$ of $\Zprm[G]$-modules, all of whose homology groups are finite. Then we can form an isomorphism
\begin{equation} \label{complex iso}
\bigoplus_j F_{2j} \teno{\Zprm} \Qprm \ra \bigoplus_j F_{2j+1} \teno{\Zprm} \Qprm
\end{equation}
using the exact sequences
\begin{equation} \label{exact sequence 1}
0 \ra B_i(F\chdot) \ra Z_i(F\chdot) \ra H_i(F\chdot) \ra 0
\end{equation}
and
\begin{equation} \label{exact sequence 2}
0 \ra Z_{i+1}(F\chdot) \ra F_{i+1} \ra B_i(F\chdot) \ra 0 .
\end{equation}
One uses that (\ref{exact sequence 2}) splits after tensoring with $\Qprm$, and (\ref{exact sequence 1}) gives isomorphisms $B_i(F\chdot) \teno{\Zprm} \Qprm \iso Z_i(F\chdot) \teno{\Zprm} \Qprm$. For details of how the isomorphism is constructed, see \cite[Section 2]{snaith:rel}.

The determinant of the isomorphism in (\ref{complex iso}) is known to be well-defined up to $\Zprm[G]\st$, \ie if different splittings are chosen then the determinant will change by an element of $\Zprm[G]\st$ (see \cite[Ch.15]{swan:alg}). We denote the class of the isomorphism in $\Qprm[G]\st/{\Zprm[G]\st}$ by $\Det(F\chdot)$.

In fact, this works more generally. $\Zprm[G]$ and $\Qprm[G]$ can be replaced by any commutative rings $R \con S$, and $F\chdot$ by a perfect complex of $R$-modules which becomes exact on tensoring with $S$, though we must now assume that the projectives in $F\chdot$ are in fact free. Then in the same way we obtain an element $\Det(F\chdot) \in S\st/{R\st}$. Further, the $R$-submodule of $S$ that $\Det(F\chdot)^{-1}$ generates (note the inverse) is equal to $\detf_R(F\chdot)$, where $\detf_R$ is the determinant functor introduced in \cite{km:proj}.

The following proposition is \cite[Cor. 2.11]{snaith:rel}.

\begin{prop} \label{ann prop}
Let $G$ be a finite abelian group, $\prm$ a prime, and suppose that $F\chdot$ is a perfect complex of $\Zprm[G]$-modules with finite homology in degrees $0$ and $1$ and zero homology elsewhere. Then if $\Hom(H_1(F\chdot),\Qprm/\Zprm)$ is cyclic as a $\Zprm[G]$-module, we have the containment
\[ \det(F\chdot)^{-1} \ann_{\Zprm[G]}(H_1(F\chdot)) \con \ann_{\Zprm[G]}(H_0(F\chdot)) .\]
\end{prop}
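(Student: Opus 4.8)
The plan is to track how $\Det(F\chdot)$ is built from the short exact sequences (\ref{exact sequence 1}) and (\ref{exact sequence 2}), and to exploit the cyclicity hypothesis to reduce the whole computation to a single module. First I would record the key structural fact: since $F\chdot$ is perfect with finite homology concentrated in degrees $0$ and $1$, tensoring with $\Qprm$ makes it exact, so in fact the only nonzero $F_i$ we need worry about (up to stable rearrangement and the freeness reduction already allowed) behave like a two-term complex $F_1 \to F_0$. More precisely, I would use the standard reduction that a perfect complex quasi-isomorphic to a complex of $\Zprm[G]$-modules with homology only in degrees $0,1$ can be replaced by a perfect complex $[\,P_1 \to P_0\,]$ of length $1$; this does not change $\Det(F\chdot)$, nor the homology groups, so it suffices to prove the statement for $F\chdot = [\,P_1 \xrightarrow{d} P_0\,]$ with $P_0, P_1$ finitely generated projective and $d$ injective after $\otimes \Qprm$. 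Here $H_0 = \mathrm{coker}(d)$ and $H_1 = \ker(d)$, both finite; since $d\otimes\Qprm$ is injective between $\Qprm[G]$-modules of the same rank it is an isomorphism, and $\Det(F\chdot)$ is precisely the class of $\det_{\Qprm[G]}(d\otimes\Qprm)$ in $\Qprm[G]\st/\Zprm[G]\st$.

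Next I would bring in the cyclicity hypothesis. Since $\Hom(H_1(F\chdot),\Qprm/\Zprm)$ is cyclic over $\Zprm[G]$ and $\Zprm[G]$ is a product of local rings, $H_1 = \ker(d)$ is itself cyclic as a $\Zprm[G]$-module — say generated by $x$. By projectivity (hence freeness locally, or after enlarging) I can extend to a convenient presentation: pick a $\Zprm[G]$-module map realizing $x$ and, using that $P_1$ is projective and $d$ is injective rationally, arrange matters so that the action of $d$ on a direct summand accounts for $\ker(d)$ being cyclic. The upshot I am aiming for is a factorization showing that the ideal $\det(F\chdot)^{-1}\Zprm[G]$, which by the last remark before the proposition equals $\detf_{\Zprm[G]}(F\chdot)$, equals the Fitting ideal $\fitt_{\Zprm[G]}(H_0(F\chdot))$ — this is the standard identification of the determinant of a length-one perfect complex with the zeroth Fitting ideal of its cokernel when the kernel vanishes rationally; the torsion in the kernel is handled precisely by the exact-sequence bookkeeping in (\ref{exact sequence 1}).

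The heart of the argument is then a Fitting-ideal computation. I would invoke the general principle (a lemma of the Cornacchia–Greither type, also used in the discussion after Theorem \ref{corn-greith} in the introduction) that over a ring which is a finite product of local rings, if $M$ is a finite module whose Pontryagin dual is cyclic, then $\fitt(M) = \ann(M)$. Applying this with $M = H_0(F\chdot)$ is not immediate, though — the hypothesis gives cyclicity of the dual of $H_1$, not of $H_0$. So the real content is to connect the two: using the perfectness of $F\chdot$ and the finiteness of both homology groups, one gets a duality (essentially a consequence of the self-duality properties of perfect complexes, or more concretely from the exact sequences (\ref{exact sequence 1})–(\ref{exact sequence 2}) together with the fact that $\mathrm{Ext}^1_{\Zprm[G]}(H_0,\Zprm[G]) \cong \Hom(H_0,\Qprm/\Zprm)$ for finite $H_0$) linking $\Hom(H_1,\Qprm/\Zprm)$ to a generating set for $H_0$. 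Thus cyclicity of $\Hom(H_1,\Qprm/\Zprm)$ forces $\fitt_{\Zprm[G]}(H_0) = \ann_{\Zprm[G]}(H_0)$, and combining with $\det(F\chdot)^{-1}\Zprm[G] = \fitt_{\Zprm[G]}(H_0)$ — or rather with the slightly weaker containment $\det(F\chdot)^{-1}\ann_{\Zprm[G]}(H_1) \con \fitt_{\Zprm[G]}(H_0)$, which is what the length-one presentation directly yields — gives the claimed inclusion into $\ann_{\Zprm[G]}(H_0)$.

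The main obstacle I expect is the passage between the homological data ($\Det(F\chdot)$, defined via arbitrary splittings of (\ref{exact sequence 1}) and (\ref{exact sequence 2})) and the module-theoretic invariant ($\fitt_{\Zprm[G]}(H_0)$): making precise that the ambiguity up to $\Zprm[G]\st$ is exactly absorbed, and that the torsion submodule $H_1$ contributes an annihilator factor rather than spoiling the identity, requires care with the determinant functor $\detf_R$ of \cite{km:proj} and its behaviour on the extensions (\ref{exact sequence 1}). Everything else — the length-one reduction, the local-ring Fitting-equals-annihilator lemma, Frobenius-reciprocity-free linear algebra — is routine; the delicate point is the compatibility of $\detf$ with the short exact sequences in the presence of finite homology, which is why I would lean on \cite[Section 2]{snaith:rel} and \cite[Ch.15]{swan:alg} rather than redo it from scratch.
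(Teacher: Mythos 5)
There is a genuine gap at your very first step, the reduction to a length-one complex $[\,P_1 \xrightarrow{d} P_0\,]$ with $P_0, P_1$ finitely generated projective and $H_1(F\chdot) = \ker(d)$ finite. Over $\Zprm[G]$ every projective module is a direct summand of a free module and is therefore $\Zprm$-torsion-free, whereas a finite $\Zprm[G]$-module is $\Zprm$-torsion; hence a finite submodule of a projective must vanish. So in your length-one model $H_1(F\chdot) = \ker(d) = 0$ automatically, and the whole hypothesis on $\Hom(H_1(F\chdot),\Qprm/\Zprm)$ is vacuous. You do not actually prove the statement in the case that matters — $H_1 \neq 0$ — because that case cannot be reached by your reduction. (When $H_1 = 0$ the conclusion is just $\fitt_{\Zprm[G]}(H_0) \con \ann_{\Zprm[G]}(H_0)$, which needs no cyclicity hypothesis at all.)

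This is not a minor technicality: perfect complexes over $\Zprm[G]$ with nonzero finite $H_1$ simply do not admit two-term projective representatives (the ring can have infinite global dimension when $G$ has order divisible by $\prm$), and the cyclicity hypothesis is exactly what is used to get around this. Its correct use is not to conclude that $H_1$ itself is cyclic — that inference, top-of-module versus socle-of-module, is in general a different condition and you would at least owe a justification for $\Zprm[G]$ — but rather to produce, by Pontryagin duality against the surjection $\Zprm[G] \twoheadrightarrow \Hom(H_1,\Qprm/\Zprm)$ and the self-duality $\Hom_{\Zprm}(\Zprm[G],\Qprm/\Zprm) \cong \Qprm[G]/\Zprm[G]$ of the symmetric algebra $\Zprm[G]$, an embedding of $H_1$ into $\Qprm[G]/\Zprm[G]$, and then to exploit that embedding in the determinant/annihilator bookkeeping. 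Your proposal replaces this with a length-one picture that cannot hold and then defers the one piece of real content (``the torsion in the kernel is handled precisely by the exact-sequence bookkeeping'') to a hope rather than an argument. For reference, the paper itself offers no proof of this proposition — it is quoted as \cite[Cor.~2.11]{snaith:rel}, a special case of \cite[Thm.~2.4]{snaith:rel} — so the comparison here is against that cited source, and the mechanism there is the embedding just described, not a length-one reduction.
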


This is a special case of a more general theorem, namely \cite[Theorem 2.4]{snaith:rel}. Proposition \ref{ann prop} as stated is sufficient for our purposes.

\subsection{The complex $C_m$}

In \cite[Section 5]{bg:equiv}, Burns and Greither construct the complex $C_m$ of $\lam$-modules. We will not review the construction here, but suffice it to say that it arises as the mapping cone of a map into an \'etale complex, and its cohomology groups are not difficult to describe. Let $U_m^\infty$ be the inverse limit (with respect to the norm maps) of the modules $\oh_{L_m^n,S}\st \teno{\Z} \Zprm$, and let $\eps_m \in U_m^\infty$ be Soul\'e's cyclotomic element:
\[ \eps_m = ((1-\zeta_m^{\prm^{-n}}\zeta_{\prm^{n+1}})(1-\zeta_m^{-\prm^{-n}}\zeta_{\prm^{n+1}}^{-1}))_n .\]
We also let $X_m^n$ be the kernel of the degree map on the free $\Zprm$-module on the set of \emph{finite} places of $L_m^n$ above those in $S$. Restriction of places gives maps $X_m^n \ra X_m^{n-1}$, and we denote the projective limit by $X_m^\infty$.

\begin{prop} \label{coh of cm}
The complex $C_m$ of $\lam$-modules is acyclic outside degrees $1$ and $2$, and we have:

(i) $H^1(C_m) \can U_m^\infty/{\mathcal{E}_m}$ where $\mathcal{E}_m$ is the $\lam$-submodule of $U_m^\infty$ generated by $\eps_m$.

(ii) There is a canonical short exact sequence
\[ 0 \ra \class_\prm^\infty \ra H^2(C_m) \ra X_m^\infty \ra 0 .\]
\end{prop}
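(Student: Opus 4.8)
The plan is to reduce the computation of $H^\bullet(C_m)$ to the \'etale cohomology of the arithmetic curves $\spec\oh_{L_m^n,S}$ and then take two inverse limits. Recall from \cite[Section 5]{bg:equiv} that $C_m$ is, up to quasi-isomorphism, the mapping cone of a morphism $\lam[-1]\ra\mathrm{R}\Gamma_m$, where $\mathrm{R}\Gamma_m:=\proo{n}\rgam{\oh_{L_m^n,S},\Zprm(1)}$ and the shift is arranged so that the copy of $\lam$ sits in degree $1$ and the induced map on $H^1$ is $\lambda\mapsto\lambda\eps_m$, with image $\mathcal{E}_m$. The long exact cohomology sequence of the cone then yields $H^i(C_m)\iso H^i(\mathrm{R}\Gamma_m)$ for every $i\neq 0,1$, together with the exact sequence
\[ 0\ra H^0(\mathrm{R}\Gamma_m)\ra H^0(C_m)\ra\lam\ra H^1(\mathrm{R}\Gamma_m)\ra H^1(C_m)\ra 0 , \]
the map $\lam\ra H^1(\mathrm{R}\Gamma_m)$ being $\lambda\mapsto\lambda\eps_m$. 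So the proposition follows once I know: (a) the $H^i(\mathrm{R}\Gamma_m)$ in all degrees; (b) that $H^0(\mathrm{R}\Gamma_m)=0$ and that $\lam\ra H^1(\mathrm{R}\Gamma_m)$ is injective; and (c) that $H^2(\mathrm{R}\Gamma_m)$ is an extension of $X_m^\infty$ by $\class_\prm^\infty$. Parts (b) and (c) carry the content.

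I would first compute the \'etale cohomology at a fixed level $n$ by Kummer theory on $\spec\oh_{L_m^n,S}$. From the sheaf sequences $\ases{\boldmu_{\prm^j}}{\Gm}{\Gm}$ (second map $\cdot\prm^j$), the identifications $\het^0(\Gm)=\oh_{L_m^n,S}\st$, $\het^1(\Gm)=\class_S(L_m^n)$, $\het^2(\Gm)=\Br(\oh_{L_m^n,S})$, and the vanishing $\het^i(\spec\oh_{L_m^n,S},\boldmu_{\prm^j})=0$ for $i\geq 3$ (the $\prm$-cohomological dimension is at most $2$ since $\prm$ is odd, so the real places contribute nothing), one obtains $\het^0(\boldmu_{\prm^j})=\rou{L_m^n}[\prm^j]$ and short exact sequences
\[ \ases{\oh_{L_m^n,S}\st\teno{\Z}\Z/\prm^j}{\het^1(\boldmu_{\prm^j})}{\class_S(L_m^n)[\prm^j]} , \]
\[ \ases{\class_S(L_m^n)\teno{\Z}\Z/\prm^j}{\het^2(\boldmu_{\prm^j})}{\Br(\oh_{L_m^n,S})[\prm^j]} . \]
All of these groups are finite, so passing to $\proo{j}$ introduces no $\proo{j}^1$ obstruction. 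A check of the maps of Kummer sequences shows that the transition maps on the $\class_S(L_m^n)\teno{\Z}\Z/\prm^j$-summand of $\het^2$ are the natural projections, so that subsystem has limit $\class_S(L_m^n)\{\prm\}$; on the Brauer summand the transition maps are multiplication by $\prm$, so that subsystem contributes $\tate{\prm}{\Br(\oh_{L_m^n,S})}$, which equals $X_m^n$ in view of the class field theory isomorphism $\Br(\oh_{L_m^n,S})\{\prm\}\iso\Ker\!\left(\bigoplus_{w\in\Sf{L_m^n}}\Qprm/\Zprm\ra\Qprm/\Zprm\right)$. Likewise $\proo{j}\het^0(\boldmu_{\prm^j})=\tate{\prm}{\rou{L_m^n}}=0$. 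Hence $\hlmn{0}=0$, $\hlmn{1}=\oh_{L_m^n,S}\st\teno{\Z}\Zprm$, $\hlmn{i}=0$ for $i\geq 3$, and there is a short exact sequence $\ases{\class_S(L_m^n)\{\prm\}}{\hlmn{2}}{X_m^n}$.

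Finally I would pass to $\proo{n}$ along the tower. The groups $\class_S(L_m^n)\{\prm\}$ are finite, so the tower $(\class_S(L_m^n)\{\prm\})_n$ is Mittag--Leffler and its $\proo{n}^1$ vanishes; the $\proo{n}^1$ of the systems $(\oh_{L_m^n,S}\st\teno{\Z}\Zprm)_n$ and $(X_m^n)_n$ also vanish in the present cyclotomic situation (this is part of the input one takes from \cite{bg:equiv}). Therefore $H^i(\mathrm{R}\Gamma_m)=\proo{n}\hlmn{i}$ and the limit of the level-$n$ sequence for $\hlmn{2}$ stays exact: this gives $H^0(\mathrm{R}\Gamma_m)=0$, $H^1(\mathrm{R}\Gamma_m)=U_m^\infty$, $H^i(\mathrm{R}\Gamma_m)=0$ for $i\geq 3$, and the canonical sequence $\ases{\class_\prm^\infty}{H^2(\mathrm{R}\Gamma_m)}{X_m^\infty}$. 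The injectivity of $\lam\ra U_m^\infty$, $\lambda\mapsto\lambda\eps_m$, is the non-vanishing statement for Soul\'e's elements (equivalently, that $\mathcal{E}_m=\lam\eps_m$ is $\lam$-free of rank one), again taken from \cite{bg:equiv}. Feeding all of this into the cone sequence produces $H^0(C_m)=0$, $H^1(C_m)\can U_m^\infty/\mathcal{E}_m$, $H^2(C_m)\iso H^2(\mathrm{R}\Gamma_m)$ with its short exact sequence onto $X_m^\infty$, and $H^i(C_m)=0$ otherwise, which is the assertion. The step I expect to be the main obstacle is this double limit: one must pin down exactly which transition maps occur in $\proo{j}$ over the Kummer levels --- so that the class group survives in $\hlmn{2}$ rather than being annihilated --- and then justify the exactness of $\proo{n}$ over the tower by controlling the relevant $\proo{n}^1$ terms for the unit systems, which is where the finiteness and Mittag--Leffler properties established in \cite{bg:equiv} are needed.
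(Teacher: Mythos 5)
The proposal is correct and its route --- realise $C_m$ as the cone of a map from a shift of $\lam$ into the Iwasawa--\'etale complex, compute each $\hlmn{i}$ via the Kummer sequences $\ases{\boldmu_{\prm^j}}{\Gm}{\Gm}$, identify $\tate{\prm}{\Br(\oh_{L_m^n,S})}$ with $X_m^n$, and pass through the two inverse limits --- is exactly the argument in \cite[Section 5.1]{bg:equiv}, which is where the paper refers the reader for the proof without giving one of its own. Your analysis of the transition maps at the Kummer levels (projections on the class-group summand, multiplication by $\prm$ on the Brauer summand) and your identification of where the serious inputs lie (vanishing of $\proo{}^1$ for the compact unit and place systems, and the non-torsion of Soul\'e's element forcing $\lam\hookrightarrow U_m^\infty$) match that source.
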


For the proof, see \cite[Section 5.1]{bg:equiv}.

Using Proposition \ref{coh of cm} one deduces that $C_m$ becomes exact after tensoring with the total quotient ring $Q(\lam)$ of $\lam$. Hence, by the discussion in Section \ref{perfect cmxs}, we can take its determinant $\Det(C_m) \in Q(\lam)\st/\lam\st$, and by \cite[Theorem 6.1]{bg:equiv},
\begin{equation} \label{bg det basis}
\lam \Det(C_m)^{-1} = \lam(e_+ + e_- g_m)
\end{equation}
where $g_m$ is the limit of the elements $g_m^n = -\stick_{L_m^n,S}(1)$. (By \cite[(26)]{bg:equiv}, $g_m$ does indeed lie in $Q(\lam)$.)

\subsection{Working at the finite level}

As explained in \cite[p.563]{snaith:rel}, after modifying $C_m$ slightly if necessary we may assume that it is a bounded complex of finitely generated free modules. Consider now the ``finite-level'' complex $C_m^n = C_m \teno{\lam} \Zprm[G_m^n]$. Using \cite[pp.573-575]{snaith:rel}, one finds that the cohomology groups are described as follows: $C_m^n$ is acyclic outside degrees $1$ and $2$, and there are exact sequences

\begin{equation}
0 \ra \E_m^n \ra \oh_{L_m^n,S}\st \teno{\Z} \Zprm \ra H^1(C_m^n) \ra 0
\end{equation}

\begin{equation}
0 \ra \class(\oh_{L_m^n,S}) \teno{\Z} \Zprm \ra H^2(C_m^n) \ra X_m^n \ra 0 ,
\end{equation}
where $\E_m^n$ is the $\Zprm[G_m^n]$-submodule of $\oh_{L_m^n,S}\st \teno{\Z} \Zprm$ generated by $(1 - \zeta_{mp^{n+1}})(1 - \zeta_{mp^{n+1}}^{-1})$.

Now let us take $m = 1$. In this case $X_1^n = 0$ so that $H^2(C_1^n)$ is finite, and by \cite[Ch.8]{wash:cyc} the submodule $\E_m^n$ of $\oh_{L_1^n,S}\st \teno{\Z} \Zprm$ has finite index. Therefore in order to apply Proposition \ref{ann prop}, it would remain to check that $\Hom(H^1(C_1^n),\Qprm/\Zprm)$ is cyclic as a $\Zprm[G_1^n]$-module. In fact, as explained in \cite[pp.575,576]{snaith:rel}, this is the case if we replace $C_1^n$ by $C_1^{n,+}$, the complex obtained by taking plus-parts for complex conjugation. Proposition \ref{ann prop} then says
\begin{equation} \label{ann rel 1}
\Det(C_1^{n,+})^{-1} \ann_{\Zprm[G_1^{n,+}]}(U^{n,+}/{\E^{n,+}}) \con \ann_{\Zprm[G_1^{n,+}]}(\class(\oh_{L_1^{n,+},S}) \teno{\Z} \Zprm)
\end{equation}
where $U^{n,+} = \oh_{L_1^{n,+},S}\st \teno{\Z} \Zprm$ and $\E^{n,+}$ is the $\Zprm[G_1^{n,+}]$-submodule of $U^{n,+}$ generated by $(1 - \zeta_{\prm^{n+1}})(1 - \zeta_{\prm^{n+1}}^{-1})$.

If we had a natural map
\begin{equation} \label{non-existent map}
\tqr{\Lambda_1}\st/{\Lambda_1\st} \ra \Qprm[G_1^n]\st/{\Zprm[G_1^n]\st} ,
\end{equation}
then by the naturality of the construction of $\Det(C_1)$ and $\Det(C_1^n)$, the latter would be the image of the former under this map. However, the projection $\Lambda_1 \ra \Zprm[G_1^n]$ does not pass to total quotient rings and so the map in (\ref{non-existent map}) does not exist.

\subsubsection{Never-divisors-of-zero}

This problem is solved by considering the set of so-called \emph{never-divisors-of-zero} in $\Lambda_1$. This is defined to be the multiplicative subset of $\Lambda_1$ consisting of all those elements in $\Lambda_1$ whose image in $\Zprm[G_1^n]$ is a non-zero-divisor for all $n$. We let $\ndzl{\Lambda_1}$ be the localization of $\Lambda_1$ at the never-divisors-of-zero, and observe that $\ndzl{\Lambda_1}$ is a subring of $\tqr{\Lambda_1}$.

Now, using \cite[Prop.4.5]{snaith:rel}, \cite[Prop.4.4]{rw:lrnc} and the discussion in \cite[p.561]{snaith:rel}, one finds that $C_m \teno{\Lambda_1} \ndzl{\Lambda_1}$ is exact. The naturality of the $\det$ construction in Section \ref{perfect cmxs} then shows that $\det(C_1^n)$ is the image of $\det(C_m)$ under $\ndzl{\Lambda_1}\st/{\Lambda_1\st} \ra \Qprm[G_1^n]\st/{\Zprm[G_1^n]\st}$. Referring back to (\ref{bg det basis}), we therefore see that
\begin{equation} \label{finite level det}
\det(C_1^n)^{-1} = e_+ + e_- g_1^n \Mod \Zprm[G_1^n]\st ,
\end{equation}
where (by abuse of notation) $e_+$ and $e_-$ refer to idempotents for complex conjugation in $\Zprm[G_1^n]$.

\subsection{The annihilator theorem} \label{sec:ann thm}

Consider again the complex $C_1^{n,+}$ obtained by taking invariants under complex conjugation. (\ref{finite level det}) shows that the determinant $\det(C_1^{n,+})$ of $C_1^{n,+}$ is equal to $1$. Since the unique prime of $L_1^n$ above $\prm$ is principal (generated by $(1 - \zeta_{\prm^{n+1}})(1 - \zeta_{\prm^{n+1}}^{-1})$), \cite[Prop.11.6]{neukirch:alg} gives
\[ \ann_{\Zprm[G_1^n]}(\class(\oh_{L_1^n,S}) \teno{\Z} \Zprm) = \ann_{\Zprm[G_1^n]}(\class(L_1^n) \teno{\Z} \Zprm) ,\]
so (\ref{ann rel 1}) becomes
\[ \ann_{\Zprm[G_1^{n,+}]}(U^{n,+}/{\E^{n,+}}) \con \ann_{\Zprm[G_1^{n,+}]}(\class(L_1^{n,+}) \teno{\Z} \Zprm) .\]
Let us state this in tidier notation. We emphasize that we have fixed an odd prime $\prm$.

\begin{theorem} \label{ann theorem}
Let $K^+$ be the maximal totally real subfield of the cyclotomic field $K = \Q(\zeta_{\prm^{n+1}})$ for some $n \geq 0$, $G^+ = \gal{K^+/\Q}$, and $S$ the set $\{\infty,\prm\}$ of places of $\Q$. If $\E^+(\prm)$ is the $\Zprm[G^+]$-submodule of $U(\prm) = \oh_{K^+,S}\st \teno{\Z} \Zprm$ generated by $(1 - \zeta_{\prm^{n+1}})(1 - \zeta_{\prm^{n+1}}^{-1})$, then
\[ \ann_{\Zprm[G^+]}(U(\prm)/{\E^+(\prm)}) \con \ann_{\Zprm[G^+]}(\class(K^+) \teno{\Z} \Zprm) .\]
\end{theorem}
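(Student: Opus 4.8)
The plan is to recognise the statement as the finite-level ($m=1$) consequence of the determinant-of-a-complex argument just assembled: it is precisely the inclusion displayed immediately before the statement, rewritten with $L_1^n = \Q(\zeta_{\prm^{n+1}})$, $L_1^{n,+} = K^+$, $G_1^{n,+} = G^+$, $U^{n,+} = U(\prm)$ and $\E^{n,+} = \E^+(\prm)$. So I would run once more, in order, through the chain of inputs feeding that inclusion. First I would take the Burns--Greither complex $C_1$ (the complex $C_m$ with $m = 1$), arranged as in \cite{snaith:rel} to be a bounded complex of finitely generated free $\lao$-modules, and form the finite-level complex $C_1^n = C_1 \teno{\lao} \Zprm[G_1^n]$. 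By Proposition \ref{coh of cm} and the finite-level exact sequences recalled above, $C_1^n$ is acyclic outside the cohomological degrees $1$ and $2$, with $H^1(C_1^n) = (\oh_{L_1^n,S}\st \teno{\Z} \Zprm)/\E_1^n$ and with $H^2(C_1^n)$ an extension of $X_1^n$ by $\class(\oh_{L_1^n,S}) \teno{\Z} \Zprm$. Because $m = 1$ and $\prm$ is totally ramified in $L_1^n/\Q$, there is a single finite place of $L_1^n$ above $S$, whence $X_1^n = 0$ and $H^2(C_1^n) = \class(\oh_{L_1^n,S}) \teno{\Z} \Zprm$ is finite; and $\E_1^n$ has finite index in the $S$-units by \cite[Ch.8]{wash:cyc}, so $H^1(C_1^n)$ is finite as well.

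Next, to apply Proposition \ref{ann prop} I would pass to plus-parts for complex conjugation and work with $C_1^{n,+}$. The essential point, following \cite[pp.575--576]{snaith:rel}, is that $\Hom(H^1(C_1^{n,+}),\Qprm/\Zprm)$ is cyclic as a $\Zprm[G_1^{n,+}]$-module; I expect this cyclicity to be the main obstacle, since it is the one genuinely structural fact and reflects that for a real abelian field of $\prm$-power conductor the relevant quotient of $S$-units has cyclic Pontryagin dual over the (cyclic) Galois ring. Reindexing the cohomological degrees $1,2$ as the homological degrees $1,0$, Proposition \ref{ann prop} then gives
\[ \det(C_1^{n,+})^{-1} \ann_{\Zprm[G_1^{n,+}]}(H^1(C_1^{n,+})) \con \ann_{\Zprm[G_1^{n,+}]}(H^2(C_1^{n,+})) .\]

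It remains to pin down $\det(C_1^{n,+})$ and to recast the right-hand side. Starting from \cite[Theorem 6.1]{bg:equiv}, which asserts $\lao \det(C_1)^{-1} = \lao(e_+ + e_- g_1)$, I would descend to finite level through the never-divisors-of-zero: the localization $\ndzl{\lao} \con \tqr{\lao}$ makes $C_1 \teno{\lao} \ndzl{\lao}$ exact (here is where \cite[Prop.4.5]{snaith:rel} and \cite[Prop.4.4]{rw:lrnc} enter), and naturality of the $\det$ construction then forces $\det(C_1^n)^{-1} = e_+ + e_- g_1^n$ modulo $\Zprm[G_1^n]\st$. Passing to plus-parts kills the $e_-$-component and leaves $e_+$, the identity of $\Zprm[G_1^{n,+}]$, so $\det(C_1^{n,+}) = 1$ modulo units and the displayed inclusion collapses to $\ann_{\Zprm[G_1^{n,+}]}(U^{n,+}/\E^{n,+}) \con \ann_{\Zprm[G_1^{n,+}]}(\class(\oh_{L_1^{n,+},S}) \teno{\Z} \Zprm)$. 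Finally I would trade the $S$-class group for the full class group: the unique prime of $L_1^{n,+}$ above $\prm$ is generated by $(1 - \zeta_{\prm^{n+1}})(1 - \zeta_{\prm^{n+1}}^{-1})$, hence principal, so \cite[Prop.11.6]{neukirch:alg} identifies the two annihilator ideals; translating into the notation $K^+$, $G^+$, $U(\prm)$, $\E^+(\prm)$ then yields the theorem.
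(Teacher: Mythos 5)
Your proposal accurately reconstructs the paper's argument: it passes from the Burns--Greither complex $C_1$ to the finite-level complex $C_1^{n}$, observes $X_1^n = 0$ and finiteness of the cohomology, takes plus-parts so that the Pontryagin dual of $H^1$ is cyclic, applies Proposition \ref{ann prop}, descends $\det(C_1)$ through the never-divisors-of-zero to get $\det(C_1^{n,+}) = 1$, and finishes via Neukirch's Prop.~11.6 to replace the $S$-class group by the full class group. This is essentially identical to the route taken in Sections~5.3--5.4 of the paper.
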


\begin{remark}
Although Vandiver's conjecture predicts the $\prm$-part of  $\class(K^+)$ to be trivial, the techniques used here (as mentioned in Section \ref{intro}) are hoped to be applicable to more general fields.
\end{remark}

It is now an easy consequence, using Proposition \ref{cyclo j int}, that the following holds (adopting the notation $A_\prm = A \teno{\Z} \Zprm$ for any abelian group $A$):

\begin{cor} \label{cyc class group}
Let $K^+$ and $S$ be as in Theorem \ref{ann theorem}. Then
\[ \J{K^+/\Q,S} \con \ann_{\Zprm[G^+]}(\class(K^+)_\prm) .\]
\end{cor}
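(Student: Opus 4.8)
The plan is to chain together the two main results already in hand: Corollary~\ref{cyc class group} is meant to follow immediately from Theorem~\ref{ann theorem} and Proposition~\ref{cyclo j int}. First I would recall that Proposition~\ref{cyclo j int} gives the description
\[ \J{K^+/\Q,S} \con \ann_{\Zp[G^+]}(U(p)/{\E^+(p)}) ,\]
where the prime $p$ there plays the role of our fixed odd prime $\prm$, and $U(p) = \oh_{K^+,S}\st \teno{\Z} \Zp$, $\E^+(p) = \E^+ \teno{\Z} \Zp$. The point is that the Stark unit group $\E^+$ in $K^+ = \Q(\zeta_{\prm^{n+1}})^+$ is, by Section~\ref{kplus eps}, generated over $\Z[G^+]$ by $-1$ and $(1-\zeta)(1-\zeta^{-1})$, so after tensoring with $\Zprm$ the module $\E^+(\prm)$ agrees with the module called $\E^+(\prm)$ in Theorem~\ref{ann theorem} (the roots of unity contribute nothing $\prm$-adically since $\prm$ is odd). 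Hence the right-hand side of the inclusion from Proposition~\ref{cyclo j int} is exactly $\ann_{\Zprm[G^+]}(U(\prm)/{\E^+(\prm)})$.

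Next I would simply feed this into Theorem~\ref{ann theorem}, which asserts
\[ \ann_{\Zprm[G^+]}(U(\prm)/{\E^+(\prm)}) \con \ann_{\Zprm[G^+]}(\class(K^+) \teno{\Z} \Zprm) = \ann_{\Zprm[G^+]}(\class(K^+)_\prm) ,\]
using the notational convention $A_\prm = A \teno{\Z} \Zprm$. Composing the two inclusions gives
\[ \J{K^+/\Q,S} \con \ann_{\Zprm[G^+]}(\class(K^+)_\prm) ,\]
which is the statement of the corollary.

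The only genuine point requiring care — and the step I would flag as the ``main obstacle,'' though it is minor — is matching up the hypotheses and notation between the two ingredients. Proposition~\ref{cyclo j int} is stated for $K = \Q(\zeta_{p^n})$ with $S = \{\infty,p\}$ and $p$ an arbitrary odd prime, whereas Theorem~\ref{ann theorem} is stated for $K = \Q(\zeta_{\prm^{n+1}})$ with the fixed odd prime $\prm$; one must observe these describe the same family of fields (reindex $n \mapsto n+1$) and that the generator $(1-\zeta_{\prm^{n+1}})(1-\zeta_{\prm^{n+1}}^{-1})$ of $\E^+(\prm)$ coincides with the $\eps = (1-\zeta)(1-\zeta^{-1})$ of Section~\ref{kplus eps}. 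One also uses that $\class(\oh_{K^+,S})$ and $\class(K^+)$ have the same $\Zprm[G^+]$-annihilator, which holds because the unique prime of $K^+$ above $\prm$ is principal — exactly the fact invoked just before Theorem~\ref{ann theorem}. Granting these identifications, the corollary is immediate.
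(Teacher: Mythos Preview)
Your proof is correct and follows exactly the approach the paper intends: the corollary is stated immediately after Theorem~\ref{ann theorem} with the remark that it is ``an easy consequence, using Proposition~\ref{cyclo j int},'' and you have simply spelled out the chaining of the inclusion $\J{K^+/\Q,S} \con \ann_{\Zprm[G^+]}(U(\prm)/{\E^+(\prm)})$ from Proposition~\ref{cyclo j int} with Theorem~\ref{ann theorem}. Your care in matching the two definitions of $\E^+(\prm)$ (noting that $-1$ vanishes after $\teno{\Z}\Zprm$ for odd $\prm$) is the only point worth checking, and you handle it correctly.
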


We can also use Theorem \ref{ann theorem}, together with Stickelberger's Theorem, to prove

\begin{cor}
Let $K = \Q(\zeta_{\prm^{n+1}})$ and $S$ as in Theorem \ref{ann theorem}. Then
\[ \ann_{\Zprm[G]}(\rou{K}_\prm) \J{K/\Q,S} \con \ann_{\Zprm[G]}(\class(K)_\prm) .\]
\end{cor}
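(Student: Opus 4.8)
The plan is to split the desired containment along the orthogonal idempotents $e_+ = \frac{1}{2}(1+c)$ and $e_- = \frac{1}{2}(1-c)$ of complex conjugation $c \in G$ — both of which lie in $\Zprm[G]$ because $\prm$ is odd — and then to dispatch the $e_+$-part with the annihilator theorem (conveniently packaged as Corollary \ref{cyc class group}) and the $e_-$-part with classical Stickelberger.

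First I would record the bookkeeping. The natural surjection $\pi : \Q[G] \ra \Q[G^+]$ induced by $G \ra G^+$ has kernel $e_-\Q[G]$, so it induces an isomorphism $e_+\Q[G] \iso \Q[G^+]$ sending $e_+z$ to $\pi(z)$, under which $e_+\Zprm[G]$ corresponds to $\Zprm[G^+]$ and the $e_+\Zprm[G]$-action on an $e_+$-module agrees, via $\pi$, with the inflated $\Zprm[G^+]$-action. Since $[K:K^+]=2$ is prime to $\prm$, extension of ideals gives a $\Zprm[G^+]$-isomorphism $\class(K^+)_\prm \iso e_+\class(K)_\prm$ (inverse: half the ideal-norm from $K$ to $K^+$), so from $\class(K)_\prm = e_+\class(K)_\prm \oplus e_-\class(K)_\prm$ one sees that $\delta \in \Zprm[G]$ annihilates $\class(K)_\prm$ if and only if $\pi(e_+\delta)$ annihilates $\class(K^+)_\prm$ and $e_-\delta$ annihilates $e_-\class(K)_\prm$. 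Finally, since $\ann_{\Zprm[G]}(\rou{K}_\prm)$ is an ideal and $\J{K/\Q,S}$ is a $\Z[G]$-module, $\ann_{\Zprm[G]}(\rou{K}_\prm)\,\J{K/\Q,S}$ is generated over $\Zprm[G]$ by the products $\beta x$ with $\beta \in \ann_{\Zprm[G]}(\rou{K}_\prm)$ and $x \in \J{K/\Q,S}$, so it is enough to show each such $\beta x$ lies in $\ann_{\Zprm[G]}(\class(K)_\prm)$.

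For the $e_+$-part: Proposition \ref{quotient naturality} gives $\pi(x) \in \J{K^+/\Q,S}$, and Corollary \ref{cyc class group} (which is precisely where Theorem \ref{ann theorem} does its work) gives $\J{K^+/\Q,S} \con \ann_{\Zprm[G^+]}(\class(K^+)_\prm) \con \Zprm[G^+]$; hence $\pi(\beta x) = \pi(\beta)\pi(x) \in \ann_{\Zprm[G^+]}(\class(K^+)_\prm)$, which by the criterion above is the $e_+$-half of the claim and also shows $e_+(\beta x) \in \Zprm[G]$. For the $e_-$-part: Proposition \ref{stick and j} gives $e_0\,\J{K/\Q,S} = \Z[G]\,\stick_{K/\Q,S}(1)$, and here $e_0 = e_-$, so $e_- x = \gamma\,\stick_{K/\Q,S}(1)$ for some $\gamma \in \Z[G]$; since $e_-$ is central and $\stick_{K/\Q,S}(1) = e_-\stick_{K/\Q,S}(1)$ we get $e_-(\beta x) = \beta(e_-x) = \beta\gamma\,\stick_{K/\Q,S}(1)$ with $\beta\gamma \in \ann_{\Zprm[G]}(\rou{K}_\prm)$, and Stickelberger's Theorem — in the form $\ann_{\Zprm[G]}(\rou{K}_\prm)\,\stick_{K/\Q,S}(1) \con \ann_{\Zprm[G]}(\class(K)_\prm)$ — shows this element lies in $\Zprm[G]$ and kills $e_-\class(K)_\prm$. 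Recombining the two parts via the criterion of the second paragraph gives $\beta x \in \Zprm[G]$ and $\beta x \in \ann_{\Zprm[G]}(\class(K)_\prm)$.

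I do not expect a genuine obstacle: the real content already lives in Theorem \ref{ann theorem} and in Stickelberger's Theorem, and the rest is formal. The points that need a line or two of care are the two ``identification'' facts of the setup — that $e_+\class(K)_\prm \iso \class(K^+)_\prm$ compatibly with $e_+\Zprm[G] \iso \Zprm[G^+]$ (standard for odd $\prm$), and that the classical Stickelberger annihilation survives $\prm$-adic completion (immediate since $\rou{K}$ and $\rou{K}_\prm$ differ only by $2$-torsion, so $\ann_{\Zprm[G]}(\rou{K}_\prm)$ is the $\Zprm$-span of the image of $\ann_{\Z[G]}(\rou{K})$, together with $\stick_{K/\Q,S}(1) = \frac{1}{2}N - \stickstick$ and the fact that $N$ annihilates $\class(K)$) — plus the observation, already noted after Proposition \ref{stick and j}, that $\stick_{K/\Q,S}(1) = e_-\stick_{K/\Q,S}(1)$ in this case.
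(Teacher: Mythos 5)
Your proposal is correct and essentially reproduces the paper's argument: split along the idempotents of complex conjugation, handle the plus part via Theorem \ref{ann theorem} and the identification $e_+\class(K)_\prm \iso \class(K^+)_\prm$, and handle the minus part via Stickelberger's theorem (noting that $e_0 = e_-$ so $e_-\J{K/\Q,S}$ is just $\Z[G]\stick_{K/\Q,S}(1)$). The one cosmetic difference is that you route the plus-part argument through Proposition \ref{quotient naturality} and Corollary \ref{cyc class group} rather than unwinding Proposition \ref{full j desc} directly as the paper does; this is a harmless repackaging, and in fact it spares you the small unspoken step in the paper's proof of transferring $\alpha \in \ann_{\Z[G]}(\oh_{K,S}^\times/\E)$ to an annihilator over $K^+$.
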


\begin{proof}
An element of the left-hand side looks like $\beta(\frac{1}{2}\alpha e_+ + \stick)$ where $\beta \in \ann_{\Zprm[G]}(\rou{K})$, $\alpha \in \ann_{\Z[G]}(\oh_{K,S}\st/\E_{K/\Q,S})$, and $\stick$ is the Stickelberger element $\stick_{K/\Q,S}(1)$. However, $\beta \stick$ annihilates $\class(K)_\prm$ so we consider simply $\frac{1}{2}\beta \alpha e_+$, or, what is more, just $\alpha e_+$.

Now, $\class(K)_\prm = \class(K)_\prm^+ \oplus \class(K)_\prm^-$, and we need only worry about the plus part. But this is canonically isomorphic to $\class(K^+)_\prm$, and then Theorem \ref{ann theorem} gives us what we need.
\end{proof}

\end{document}